
\documentclass[12pt]{article}
\usepackage{amsmath,amssymb,amsthm,eufrak}
\usepackage[english]{babel}
\usepackage[active]{srcltx}
\usepackage{cite}
\usepackage{bbding}
\newcommand{\TL}{\mbox{\bf{$\!\!$.}}}
\newcommand{\bc}{\begin{center}}
\newcommand{\ec}{\end{center}}

\theoremstyle{theorem}

\newtheorem{theor}{Theorem}[section]
\theoremstyle{theorem}
\newtheorem{propos}{Proposition}[section]
\newtheorem{con}{Condition}[section]
\newtheorem{cor}{Corollary}[section]

\newcommand{\bfn}{\begin{equation}}
\newcommand{\efn}{\end{equation}}

\newcommand{\ov}{\overline}

\newcommand{\fo}{\forall}

\newcommand{\La}{\Lambda}
\newcommand{\la}{\lambda}
\newcommand{\e}{\emptyset}

\newcommand{\Om}{\Omega}

\newcommand{\df}{\stackrel{\triangle}{=}}
\newcommand{\cl}{{\cal L}}
\newcommand{\cp}{{\cal P}}
\newcommand{\cb}{{\cal B}}
\newcommand{\cf}{{\cal F}}
\newcommand{\cu}{{\cal U}}
\newcommand{\ca}{{\cal A}}
\sloppy

\begin{document}

\title{Filters and Ultrafilters as Approximate Solutions in the
Attainability Problems with Constrains of Asymptotic Character}
\author{A.G.~Chentsov}
\date{Institute of Mathematic and Mechanics UrB RAS
chentsov@imm.uran.ru} \maketitle

\abstract{Abstract problems about attainability in topological
spaces are considered. Some nonsequential version of the Warga
approximate solutions is investigated: we use filters and
ultrafilters of measurable spaces. Attraction sets are constructed.

AMS (MOS) subject classification. 46A, 49 K 40.}

\section{Introduction}\setcounter{equation}{0}

This investigation is devoted to questions connected with
attainability under constraints; these constraints can be perturbed.
Under these perturbations, jumps of the attained quality can arise.
If perturbation is reduced to a weakening of the initial standard
constraints, then we obtain some payoff in a result. Therefore,
behavior limiting with respect to the validity of constraints can be
very interesting. But, the investigation of possibilities of the
above-mentioned behavior is difficult. The corresponding
``straight'' methods are connected with constructions of asymptotic
analysis. Very fruitful approach is connected with the extension of
the corresponding problem. For example, in theory of control can be
used different variants of generalized controls formalizable in the
corresponding class of measures very often. In this connection, we
note the known investigations of J.\,Warga (see~\cite{1}). We recall
the notions of precise, generalized, and approximate controls (see
\cite{1}). In connection with this approach, we recall the
investigations of R.V. Gamkrelidze \cite{2}. For problems of impulse
control, we note the original approach of N.N. Krasovskii (see
\cite{3}) connected with the employment of distributions. If is
useful to recall some asymptotic constructions in mathematical
programming (see \cite{4, 5}). We note remarks in \cite{4,5}
connected with the possible employment of nonsequential approximate
(in the Warga terminology) solutions-nets.

The above-mentioned (and many other) investigations concern extremal problems. But, very
important analogs are known for different quality problems. We recall the fundamental
theorem about an alternative in differential games established by N.N. Krasovskii and
A.I. Subbotin \cite{6}. In the corresponding constructions, elements of extensions are
used very active. Moreover, approximate motions were used. The concrete connection of
generalized and approximate elements of the corresponding constructions was realized by
the rule of the extremal displacement of N.N. Krasovskii.

In general, the problem of the combination of generalized and
approximate elements in problems with constraints is very important.
Namely, generalized elements (in particular, generalized controls)
can be used for the representation of objects arising by the limit
passage in the class of approximate elements (approximate
solutions). These limit objects can be consider as attraction
elements. Very often these elements suppose a sequential realization
(see \cite[ch.\,III, IV]{1}). But, in other cases attraction
elements should be defined by more general procedures.

So, we can consider variants of generalized representation of
asymptotic objects. This approach is developed by J. Warga in theory
of control.

Similar problems can arise in distinct sections of mathematics. For example, adherent
points of the filter base in topological space can be considered as attraction elements.
Of course, here nonsequential variants of the limit passage are required very often.

 In the
following, the attainability problem with constraints of asymptotic character is
considered.

Fix two nonempty sets $E$ and $\mathbf{H},$ and an operator
$\mathbf{h}$ from $E$ into $\mathbf{H}.$ Elements of $E$ are
considered as solutions (sometimes controls) and elements of
$\mathbf{H}$ play the role of estimates. We consider $\mathbf{h}$ as
the aim mapping. If we have the set $E_o,\,E_o\subset E,$ of
admissible (in traditional sense) solutions, then $\mathbf{h}^1(E_o)
= \{\mathbf{h}(x):\,x\in E_o\}$ play the role of an attainability
domain in the estimate space. But, we can use another constraints:
instead of $E_o,$ a nonempty family $\mathcal{E}$ of subsets of $E$
is given. In this case, we can use sequences $(x_i)_{i=1}^\infty$ in
$E$ with a special property in the capacity of approximate
solutions. Namely, we require that the sequence $(x_i)_{i=1}^\infty$
has the following property: for any $\widetilde{E}_o\in
\mathcal{E},$ the inclusion $x_j\in \widetilde{E}_o$ takes place
from a certain index (i. e. for $j \geqslant \widetilde{j} _o,$
where $\widetilde{j}_o$ is a fixed index depending on
$\widetilde{E}_o).$ For such solutions we obtain the sequences
$\bigl(\mathbf{h}(x_i)\bigl)_{i=1}^\infty$ in $\mathbf{H}.$ If
$\mathbf{H}$ is equipped with a topology $\mathbf{t},$ then we can
consider the limits of such sequences
$\bigl(\mathbf{h}(x_i)\bigl)_{i=1}^\infty$ as attraction elements
(AE) in $(\mathbf{H},\mathbf{t}).$ Of course, our AE are
``sequential'': we use the limit passage in the class of sequences.
This approach can be very limiting. The last statement is connected
both with our family $\mathcal{E}$ and with topology $\mathbf{t}.$
The corresponding examples are known: see \cite{30, 31}. In many
cases, the more general variants of the limit passage are required.
Of course, we can consider nets $(x_\alpha)$ in $E$ and, as a
corollary, the corresponding nets $\bigl(\mathbf{h}(x_\alpha)\bigl)$
in $\mathbf{H}.$ In addition, the basic requirement of admissibility
it should be preserved: for any $\widetilde{E}_o\in \mathcal{E},$
the inclusion $x_\alpha \in \widetilde{E}_o$ is valid starting from
a certain index. With the employment of such nets, we can realize
new AE; this effect takes place in many examples.

But, the representation of the ``totality'' of above-mentioned
$(\mathcal{E}$-admissible) nets as a set is connected with
difficulties. Really, any net in the set $E$ is defined by a mapping
from a nonempty directed set (DS) $\mathcal{D}$ into $E.$ The
concrete choice of $\mathcal{D}$ is arbitrary ($\mathcal{D}$ is a
nonempty set). Therefore we have the very large ``totality'' of nets
with the point of view of traditional Zermelo axiomatics. But, this
situation can be corrected by the employment of filters of $E:$ it
is possible to introduce the set of all $\mathcal{E}$-admissible
filters of the set $E.$ In addition, the $\mathcal{E}$-admissibility
of a filter $\mathcal{F}$ is defined by the requirement
$\mathcal{E}\subset \mathcal{F}.$ So, we can consider nonsequential
approximate solutions (analogs of sequential approximate solutions
of Warga) as filters $\mathcal{F}$ of $E$ with the property
$\mathcal{E}\subset \mathcal{F}.$ Moreover, we can be restricted to
the employment of only ultrafilters (maximal filters) with the
above-mentioned property. In two last cases, we obtain two variants
of the set of admissible nonsequential  approximate solutions
defined in correspondence with Zermelo axiomatics. In our
investigation, such point of view is postulated. And what is more,
we give the basic attention to the consideration of ultrafilters.
Here, the important property of a compactness arises. Namely, the
corresponding space of ultrafilters is equipped with a compact
topology. This permits to consider ultrafilters as generalized
elements (GE) too (we keep in mind the above-mentioned
classification of Warga).

The basic difficulty is connected with a realizability: the
existence of free ultrafilters (for which effects of an extension
are realized) is established only with the employment of axiom of
choice. Roughly speaking, free ultrafilters are ``invisible''. This
property is connected with ultrafilters of the family of all subsets
of the corresponding ``unit''. But, we can to consider ultrafilters
of measurable spaces with algebras and semialgebras of sets. We note
that some measurable spaces admitting the representation of all such
ultrafilters are known (see, for example, \cite[$\S$\,7.6]{32}; in
addition, the unessential transformation with the employment of
finitely additive (0,1)-measures is used).

\section{General notions and designations}

We use the standard set-theoretical symbolics including quantors and
propositional connectives; as usually $\exists !$ replaces the
expression ``there exists and unique'', $\df$ is the equality by
definition. In the following, for any two objects $x$ and $y,\, \{x;
y\}$ is the unordered pair of $x$ and $y$ (see \cite[ch.\,II]{33}).
Then, $\{x\} \df \{x; x\}$ is singleton containing an object $x.$ Of
course, for any objects $x$ and $y\ \ (x, y) \df \bigl\{\{x\}; \{x;
y\}\bigl\}$ is the ordered pair of objects $x$ and $y;$ here, we
follow to \cite[ch.\,II]{33}. By $\emptyset$ we denote the empty
set. By a family we call a set all elements of which are sets.

By $\mathcal{P}(X)$ we denote the family of all subsets of a set
$X;$ then, $\cp^\prime(X) \df\,\cp(X)\setminus \{\emptyset\}$ is the
family of all nonempty subsets of $X.$ Of course, for any set $A,$
in the form of $\mathcal{P}^\prime\bigl(\mathcal{P}(A)\bigl)$ and
$\mathcal{P}^\prime\bigl(\mathcal{P}^\prime(A)\bigl),$ we have the
family of all nonempty subfamilies of $\mathcal{P}(A)$ and
$\mathcal{P}^\prime(A)$ respectively.

If $X$ is a set, then we denote by $\mathrm{Fin}(X)$ the family of all finite sets of
$\mathcal{P}^\prime(X);$ then $(\mathrm{FIN})[X] \df
\mathrm{Fin}(X)\,\cup\,\{\emptyset\}$ is the family of all finite subsets of $X.$

For any sets $A$ and $B,$ we denote by $B^A$ the set of all mappings from $A$ into $B.$
If $A$ and $B$ are sets, $f\in B^A,$  and $C\in \mathcal{P}(A),$ then $f^1(C) \df
\{f(x):\,x\in C\}\in \mathcal{P}(B)$ (the image of $C$ under the operation $f)$ and $(f
|\,C)\in B^C$ is the usual $C$-restriction of $f:(f |\,C)(y) \df f(y)\ \ \forall y\in C.$
In the following, $\mathbb{N}\df \{1;2;\ldots\}$ and $\mathbb{R}$ is the real line;
$\mathbb{N}\subset \mathbb{R}.$ Of course, we use the natural order $\leqslant$ of
$\mathbb{R}.$ If $n\in \mathbb{N},$ then
$$\overline{1,n}\df \{i\in \mathbb{N} |\,i\leqslant n\}.$$

{\bf Transformations of families.} For any nonempty family $\mathcal{A}$ and a set $B,$
we suppose that
$$\mathcal{A}\bigl|_B \df \{A\,\cap\,B:\,A\in \mathcal{A}\}\in
\mathcal{P}^\prime\bigl(\mathcal{P}(B)\bigl).$$ If $X$ and $Y$ are
sets and $f\in Y^X,$  then we suppose that
\begin{equation}\label{2.1}
\begin{array}{c}\Bigl(f^1[\mathcal{X}] \df \{f^1(A):\,A\in
\mathcal{X}\}\ \ \forall \mathcal{X}\in
\mathcal{P}^\prime\bigl(\mathcal{P}(X)\bigl)\Bigl)\,\&\\ \&\,
\,\Bigl(f^{-1}[\mathcal{Y}] \df \{f^{-1}(B):\,B\in \mathcal{Y}\}\ \
\forall \mathcal{Y}\in
\mathcal{P}^\prime\bigl(\mathcal{P}(Y)\bigl)\Bigl);
\end{array}
\end{equation}
of course, in (\ref{2.1}) nonempty families are defined.

If $\mathcal{E}$ is a family, then we suppose that $$\{\cup\} (\mathcal{E}) \df
\Bigl\{\bigcup_{H\in\mathcal{H}}H:\,\mathcal{H}\in \mathcal{P}(\mathcal{E})\Bigl\}$$ (we
keep in mind that $\mathcal{P}(\mathcal{E})$ is a nonempty set $(\emptyset\in
\mathcal{P}(\mathcal{E}))$ and, for $\mathcal{R}\in
\mathcal{P}(\mathcal{E}),\,\mathcal{R}$ is a family) and $$\{\cap\} (\mathcal{E}) \df
\Bigl\{\bigcap_{H\in\mathcal{H}}H:\,\mathcal{H}\in
\mathcal{P}^\prime(\mathcal{E})\Bigl\}$$ (of course, for $\mathcal{H}\in
\mathcal{P}^\prime(\mathcal{E}),\,\mathcal{H}$ is a nonempty family); moreover
$$\{\cap\}_\mathbf{f} (\mathcal{E}) \df
\Bigl\{\bigcap_{H\in\mathcal{K}}H:\,\mathcal{K}\in \mathrm{Fin}(\mathcal{E})\Bigl\}.$$
So, for any nonempty family $\mathcal{E},$ we obtain that $$\biggl(\{\cup\} (\mathcal{E})
\in \mathcal{P}^\prime\Bigl(\mathcal{P}\bigl(\bigcup\limits_{E\in
\mathcal{E}}E\bigl)\Bigl):\,\mathcal{E}\subset \{\cup\}(\mathcal{E})\biggl)\,\&$$
$$\&\,\biggl(\{\cap\} (\mathcal{E}) \in
\mathcal{P}^\prime\Bigl(\mathcal{P}\bigl(\bigcup\limits_{E\in
\mathcal{E}}E\bigl)\Bigl):\,\mathcal{E}\subset \{\cap\}(\mathcal{E})\biggl)\,\&$$
$$\&\,\biggl(\{\cap\}_\mathbf{f} (\mathcal{E}) \in
\mathcal{P}^\prime\Bigl(\mathcal{P}\bigl(\bigcup\limits_{E\in
\mathcal{E}}E\bigl)\Bigl):\,\mathcal{E}\subset \{\cap\}_\mathbf{f}(\mathcal{E})\biggl);$$
of course, $\{\cap\}_\mathbf{f}(\mathcal{E}) \subset \{\cap\}(\mathcal{E}).$

{\bf Special families.} Let $I$ be a set.  Then, we suppose that
\begin{equation}\label{2.2}\begin{array}{c} \pi[I]
\df \{\cl\in
\mathcal{P}^\prime\bigl(\mathcal{P}(I)\bigl)\,\bigl|\,(\emptyset\in
\cl)\,\&\,(I\in \cl)\,\&\\ \&\,(A \cap B\in \cl\ \ \fo A\in \cl\ \
\fo B\in \cl)\};\end{array}
\end{equation} elements of (\ref{2.2}) are called $\pi$-systems with
``zero'' and ``unit''. Moreover,
\begin{equation}\label{2.3}\begin{array}{c}(\mathrm{LAT})[I] \df \bigl\{\cl\in
\mathcal{P}^\prime\bigl(\mathcal{P}(I)\bigl)\,\bigl|\,(\emptyset\in \cl)\,\&\\
\&\,\bigl(\fo A\in\cl\ \ \fo B\in \cl\ \ (A \cup B\in \cl)\,\&\,(A \cap B\in
\cl)\bigl)\bigl\};\end{array}
\end{equation} elements of (\ref{2.3}) are lattices of subsets of $I$ (with
``zero''). Finally,
 \bfn\label{2.4}(\mathrm{LAT})_o[I] \df \bigl\{\cl\in (\mathrm{LAT})[I]\,\bigl|\,I\in
 \cl\bigl\}\in \mathcal{P}(\pi[I]).\efn Of course, in (\ref{2.4}) lattices of sets with
 ``zero'' and ``unit'' are introduced. We note that $\cl \cup \{I\} \in (\mathrm{LAT})_o[I]\ \
 \fo \cl\in (\mathrm{LAT})[I].$ Of course, \begin{equation}\label{2.5}\begin{array}{c}
 (\mathrm{top})[I] \df \bigl\{\tau\in
 \pi[I]\,\bigl|\,\bigcup\limits_{G\in\,\mathcal{G}}G\in \tau\ \ \fo \mathcal{G}\in
 \mathcal{P}^\prime(\tau)\bigl\} = \\ = \bigl\{\tau\in \pi[I]\,\bigl|\,\bigcup\limits_{G\in\,
 \mathcal{G}}G\in \tau\ \ \fo \mathcal{G}\in \mathcal{P}(\tau)\bigl\}\end{array}
\end{equation} is the set of
 all topologies of $I.$ If $\tau\in (\mathrm{top})[I],$ then the pair $(I,\tau)$ is a
 topological space (TS);
 \begin{equation}\label{2.6}
\begin{array}{c}(\mathrm{clos})[I] \df \bigl\{\mathcal{F}\in
\mathcal{P}^\prime\bigl(\mathcal{P}(I)\bigl)\,|\,(\emptyset\in \mathcal{F})\,\&\\
\&\,(I\in
\mathcal{F})\,\&\,(A \cup B\in \mathcal{F}\ \fo A\in \mathcal{F}\ \fo B\in \mathcal{F})\,\&\\
\&\,\bigl(\bigcap\limits_{H\in \mathcal{H}}H\in \mathcal{F}\ \ \fo \mathcal{H}\in
\mathcal{P}^\prime(\mathcal{F})\bigl)\bigl\};\end{array}
\end{equation}
in (\ref{2.6}) we have families dual with respect to topologies. It is obvious that
\begin{equation}\label{2.6`}\begin{array}{c}\bigl((\mathrm{top})[I] \subset
(\mathrm{LAT})_o[I]\bigl)\,\&\,\bigl((\mathrm{clos})[I]\subset
(\mathrm{LAT})_o[I]\bigl).
\end{array}
\end{equation} We suppose that
$\mathbf{C}_I:\,\mathcal{P}^\prime\bigl(\mathcal{P}(I)\bigl) \rightarrow
\mathcal{P}^\prime\bigl(\mathcal{P}(I)\bigl)$ is the mapping for which
\begin{equation}\label{2.7}\begin{array}{c}\mathbf{C}_I(\mathcal{H}) \df \{I\setminus
H:\,H\in \mathcal{H}\}\ \ \fo \mathcal{H}\in
\mathcal{P}^\prime\bigl(\mathcal{P}(I)\bigl).\end{array}
\end{equation}
From (\ref{2.5}) -- (\ref{2.7}), we obtain the following properties:
\begin{equation}\label{2.8}\begin{array}{c}\Bigl(\mathbf{C}_I\bigl(\mathbf{C}_I
(\mathcal{H})\bigl) = \mathcal{H}\ \ \fo \mathcal{H}\in
\mathcal{P}^\prime\bigl(\mathcal{P}(I)\bigl)\Bigl)\,\&\\
\&\,\bigl(\mathbf{C}_I(\tau)\in
(\mathrm{clos})[I]\ \ \fo \tau\in(\mathrm{top})[I]\bigl)\,\&\\
\&\,\bigl(\mathbf{C}_I(\mathcal{F})\in (\mathrm{top})[I]\ \ \fo
\mathcal{F}\in (\mathrm{clos})[I]\bigl).
\end{array}
\end{equation}
We note that $\mathcal{P}(I) \in (\mathrm{top})[I] \cap (\mathrm{clos})[I];$ in addition,
$$\mathbf{C}_I\bigl(\mathcal{P}(I)\bigl) = \mathcal{P}(I).$$  Of course, in (\ref{2.8}), we have
(in particular) the natural duality used in general topology. Let $$(c - \mathrm{top})[I]
\df \bigl\{\tau\in (\mathrm{top})[I] |\,\fo \xi\in \mathcal{P}^\prime(\tau)\ \ \bigl(I =
\bigcup\limits_{G\in\,\xi}G\bigl)\Longrightarrow$$
$$\Longrightarrow \bigl(\exists \mathcal{K}\in \mathrm{Fin}(\xi):\,I =
\bigcup\limits_{G\in\,\mathcal{K}}G\bigl)\bigl\}.$$ (the set of all
compact topologies of $I).$ Now, we introduce in consideration
algebras of  sets. Namely,
\begin{equation}\label{2.9}\begin{array}{c}(\mathrm{alg})[I] \df \{\cl\in \pi[I]\,|\,
I\setminus L\in \cl\ \ \fo L\in  \cl\}\subset
(\mathrm{LAT})_o[I].\end{array}\end{equation}In connection with
(\ref{2.9}), we note that
$$\{L\in \cl\,|\,I\setminus L\in \cl\}\in (\mathrm{alg})[I]\ \ \fo \cl\in
(\mathrm{LAT})_o[I].$$ If $\cl\in (\mathrm{alg})[I],$ then $(I,\cl)$
is a measurable space with an algebra of sets.

If $\cl\in \pi[I],\,n\in \mathbb{N}$ and $A\in \mathcal{P}(I),$ then by $\Delta_n(A,\cl)$
we denote the set of all mappings $$(L_i)_{i\in
\overline{1,n}}:\,\overline{1,n}\longrightarrow \cl$$ for each of which: 1) $A =
\bigcup\limits_{i=1}^n L_i;$ 2) $L_{i_1} \cap L_{i_2} = \emptyset\ \ \fo i_1\in
\overline{1,n}\ \ \fo i_2\in \overline{1,n}\setminus \{i_1\}.$ Then
\begin{equation}\label{2.10}\begin{array}{c}\Pi[I] \df \{\cl\in \pi[I] |\,\fo L\in\cl\
\ \exists n\in \mathbb{N}:\,\Delta_n(I\setminus L,\cl)\neq \emptyset\}\end{array}
\end{equation}
is the set of all semialgebras of subsets of $I.$ Of course,
$$(\mathrm{alg})[I] = \{\cl\in\Pi [I]\,|\,I\setminus L\in \cl\ \ \fo L\in
\cl\};$$ see (\ref{2.9}). If we have a semialgebra of subsets of
$I,$ then algebra generated by the initial semialgebra is realized
very simply: for any $\cl\in\Pi[I],$ $$\mathbf{a}_I^o(\cl) \df
\{A\in \mathcal{P}(I)\,|\,\exists n\in \mathbb{N}:\,\Delta_n(A,\cl)
\neq \emptyset\}\in (\mathrm{alg})[I]$$ has the properties: 1)
$\cl\subset \mathbf{a}_I^o(\cl);$ 2) $\fo \mathcal{A}\in
(\mathrm{alg})[I]$ $$(\cl\subset \mathcal{A}) \Longrightarrow
\bigl(\mathbf{a}_I^o(\cl)\subset \mathcal{A}\bigl).$$ Now, we
introduce some notions important for constructions of general
topology. Namely, we consider topological bases of two types:
\begin{equation}\label{2.11}\begin{array}{c}
(\mathrm{op}-\mathrm{BAS})[I]\df \bigl\{\mathcal{B}\in
\mathcal{P}\bigl(\mathcal{P}(I)\bigl)\,|\,(I =
\bigcup\limits_{B\in\,\mathcal{B}}B)\,\&\,\bigl(\fo B_1\in
\mathcal{B}\\ \fo B_2\in \mathcal{B}\ \fo x \in B_1 \cap B_2\ \
\exists B_3\in \mathcal{B}:\,(x\in B_3)\,\&\,(B_3\subset B_1 \cap
B_2)\bigl)\bigl\},\end{array}\end{equation}
\begin{equation}\label{2.12}\begin{array}{c}
(\mathrm{cl}-\mathrm{BAS})[I]\,\df\,\Bigl\{\mathcal{B}\in
\mathcal{P}^\prime\bigl(\mathcal{P}(I)\bigl)\,|\,(I \in
\mathcal{B})\,\&\,(\bigcap\limits_{B\in\,\mathcal{B}}B =
\emptyset)\,\&\\ \&\,\bigl(\fo B_1\in \mathcal{B}\ \ \fo B_2\in
\mathcal{B}\ \ \fo x \in I\setminus (B_1 \cup B_2)\\ \exists B_3\in
\mathcal{B}:\,(B_1 \cup B_2 \subset B_3)\,\&\,(x\notin
B_3)\bigl)\Bigl\},\end{array}
\end{equation}
Of course, $(\mathrm{op}-\mathrm{BAS})[I] = \{\beta\in
\mathcal{P}\bigl(\mathcal{P}(I)\bigl)\,|\,\{\cup\}(\beta) \in
(\mathrm{top})[I]\}.$ In connection with (\ref{2.11}), we suppose
that $(\mathrm{op}-\mathrm{BAS})_\emptyset[I] \df \{\mathcal{B}\in
(\mathrm{op}-\mathrm{BAS})[I]\,|\,\emptyset \in \mathcal{B}\},\
(\mathrm{op}-\mathrm{BAS})_\emptyset[I]\subset
\mathcal{P}^\prime\bigl(\mathcal{P}(I)\bigl);$
$$\widetilde{\mathcal{B}} \cup \{\emptyset\}\in
(\mathrm{op}-\mathrm{BAS})_\emptyset[I]\ \ \fo
\widetilde{\mathcal{B}}\in (\mathrm{op}-\mathrm{BAS})[I].$$
Moreover, the following obvious property is valid:
$$\{\cup\}(\mathcal{B}) = \{\cup\}(\mathcal{B}\cup \{\emptyset\})\ \
\fo \mathcal{B}\in (\mathrm{op}-\mathrm{BAS})[I].$$ We note the
natural connection of open and closed bases:
\begin{equation}\label{2.13}\begin{array}{c}\bigl(\mathbf{C}_I(\mathcal{B})\in (\mathrm{op}-
\mathrm{BAS})_\emptyset[I]\ \ \fo \mathcal{B}\in
(\mathrm{cl}-\mathrm{BAS})[I]\bigl)\,\&\\
\&\,\bigl(\mathbf{C}_I(\beta)\in (\mathrm{cl}-\mathrm{BAS})[I]\ \
\fo \beta\in
(\mathrm{op}-\mathrm{BAS})_\emptyset[I]\bigl).\end{array}
\end{equation}
Along with (\ref{2.13}), we note the following important property:
\begin{equation}\label{2.14}\{\cap\}(\mathcal{B})\in (\mathrm{clos})[I]\ \
\fo \mathcal{B}\in (\mathrm{cl}-\mathrm{BAS})[I].
\end{equation}
From (\ref{2.8}) and (\ref{2.14}), we obtain the obvious statement:
\begin{equation}\label{2.15}\mathbf{C}_I\bigl(\{\cap\}(\mathcal{B})\bigl)\in
 (\mathrm{top})[I]\ \ \fo \mathcal{B}\in (\mathrm{cl}-\mathrm{BAS})[I].
\end{equation}
So, closed bases can be used (see (\ref{2.15})) for topologies constructing. We note the
following obvious property  (here we use (\ref{2.13}) and (\ref{2.15})):
\begin{equation}\label{2.16}\mathbf{C}_I\bigl(\{\cap\}(\mathcal{B})\bigl) =
\{\cup\}\bigl(\mathbf{C}_I(\mathcal{B})\bigl)\ \ \fo \mathcal{B}\in
(\mathrm{cl}-\mathrm{BAS} )[I].
\end{equation}
Of course, in (\ref{2.16}), we use the usual duality property connected with (\ref{2.13})
-- (\ref{2.15}).

{\bf Some additions.} In the following, we suppose that
\begin{equation}\label{2.17}(\mathcal{D}-\mathrm{top})[I] \df
\{\tau\in (\mathrm{top})[I]\,|\,\{x\}\in \mathbf{C}_I[\tau]\  \ \fo x\in I\};
\end{equation}
if $\tau\in (\mathcal{D}-\mathrm{top})[I],$ then TS $(I,\tau)$ is called $T_1$-space. We
use (\ref{2.17}) under investigation of properties of topologies on ultrafilter spaces.

Finally, we suppose that $(\mathrm{LAT})^o[I] \df \{\cl\in
(\mathrm{LAT})_o[I]\,|\,\{x\}\in \cl\ \ \fo x\in I\}.$ So, we
introduce ``continuous'' lattices.

\section{Nets and filters as approximate solutions under constraints of asymptotic character}
\setcounter{equation}{0}

In this section, we fix a nonempty set $\mathbb{E}$ considered (in particular) as the
space of usual solutions. We consider families $\mathcal{E}\in
\mathcal{P}^\prime\bigl(\cp(\mathbb{E})\bigl)$ as constraints of asymptotic character. Of
course, in this case, we use asymptotic version of solutions. The simplest variant is
realized by the employment of sequences in $\mathbb{E}:$ in the set
$\mathbb{E}^\mathbb{N},$ the set of $\mathcal{E}$-admissible sequences (see section 1) is
selected. It is logical to generalize this approach: we keep in mind the employment of
nets.  Later, we introduce some definitions connected with the Moore-Smith convergence.
But, before we consider the filter convergence.

We denote by $\beta[\mathbb{E}]$ (by $\beta_o[\mathbb{E}])$ the set of all families
$\mathcal{B}\in \mathcal{P}^\prime\bigl(\mathcal{P}(\mathbb{E})\bigl)$ (families
$\mathcal{B}\in \mathcal{P}^\prime\bigl(\mathcal{P}^\prime(\mathbb{E})\bigl))$ for which
$$\fo B_1\in \mathcal{B}\ \ \fo B_2\in \mathcal{B}\ \ \exists B_3\in \mathcal{B}:\,
B_3\subset B_1 \cap B_2;$$ $\beta_o[\mathbb{E}]\subset \beta[\mathbb{E}].$ Then,
$\beta_o[\mathbb{E}]$ is the set of all filter bases on $\mathbb{E}.$ By
$\mathfrak{F}[\mathbb{E}]$ we denote the set of all filters on $\mathbb{E}:$
\begin{equation}\label{3.1}\begin{array}{c}\mathfrak{F}[\mathbb{E}] \df \bigl\{\mathcal{F}\in
\mathcal{P}^\prime\bigl(\mathcal{P}^\prime(\mathbb{E})\bigl)\,|\,(A
\cap B \in \mathcal{F}\ \ \fo A\in \mathcal{F}\ \ \fo B\in
\mathcal{F})\,\&\\ \&,\bigl(\{H\in
\mathcal{P}(\mathbb{E})\,|\,F\subset H\}\subset \mathcal{F}\ \ \fo
F\in \mathcal{F}\bigl)\bigl\}.\end{array}
\end{equation}
Using (\ref{3.1}), we introduce the set
$\mathfrak{F}_\mathbf{u}[\mathbb{E}]$ of all ultrafilters on
$\mathbb{E}:$
\begin{equation}\label{3.2}\mathfrak{F}_\mathbf{u}[\mathbb{E}] \df
\bigl\{\mathcal{U}\in \mathfrak{F}[\mathbb{E}]\,|\,\fo
\mathcal{F}\in \mathfrak{F}[\mathbb{E}]\ \ \bigl((\mathcal{U}\subset
\mathcal{F}) \Longrightarrow (\mathcal{U} =
\mathcal{F})\bigl)\bigl\}.\end{equation} In connection with
(\ref{3.1}) and (\ref{3.2}), see in particular \cite[ch. I]{34}. In
addition, \begin{equation}\label{3.3}
(\mathbb{E}-\mathrm{\mathbf{fi}})[\mathcal{B}] \df \{H\in
\mathcal{P}(\mathbb{E})\,|\,\exists B\in \mathcal{B}:\,B\subset
H\}\in \mathfrak{F}[\mathbb{E}]\ \ \fo \mathcal{B}\in
\beta_o[\mathbb{E}].
\end{equation}
By (\ref{3.3}) we define the filter on $\mathbb{E}$ generated by a
base of $\beta_o[\mathbb{E}].$

If $\mathcal{E}\in \mathcal{P}^\prime\bigl(\cp(\mathbb{E})\bigl),$ then by
$\mathfrak{F}_o[\mathbb{E}|\,\mathcal{E}]$ (by
$\mathfrak{F}_\mathbf{u}^o[\mathbb{E}|\,\mathcal{E}])$ we denote the set of all filters
$\mathcal{F}\in \mathfrak{F}[\mathbb{E}]$ (ultrafilters $\mathcal{F}\in
\mathfrak{F}_\mathbf{u}[\mathbb{E}])$ such that $\mathcal{E}\subset \mathcal{F}.$ Then,
for any filter $\mathcal{F}_*\in \mathfrak{F}[\mathbb{E}],$ we have
$\mathfrak{F}_\mathbf{u}^o[\mathbb{E} |\,\mathcal{F}_*]\in
\cp^\prime(\mathfrak{F}_\mathbf{u}[E])$ and what is more $\mathcal{F}_*$ is the
intersection of all  ultrafilters $\mathcal{U} \in \mathfrak{F}_\mathbf{u}^o[\mathbb{E}
|\,\mathcal{F}_*];$ see \cite[ch. I]{34}.

If a family $\mathcal{E}\in \cp^\prime\bigl(\cp(E)\bigl)$ is
considered as the constraint of asymptotic character, then
ultrafilters $\mathcal{U}\in \mathfrak{F}_\mathbf{u}^o[E
|\,\mathcal{E}]$ are considered as (nonsequential) approximate
solutions; of course, filters $\mathcal{F}\in
\mathfrak{F}_o[\mathbb{E}\,|\,\mathcal{E}]$ can be considered in
this capacity also. But, ultrafilters have better properties;
therefore, now we are restricted to employment of ultrafilters as
approximate solutions.

{\bf The filter of neighborhoods.} If $\tau \in
(\mathrm{top})[\mathbb{E}]$ and $x\in \mathbb{E},$ then
$$N_\tau^o(x) \df \{G\in \tau\,|\,x\in G\}\in \beta_o[\mathbb{E}]$$
and $N_\tau(x) \df (\mathbb{E}-\mathrm{\mathbf{fi}})[N_\tau^o(x)];$
of course, $N_\tau(x) \in \mathfrak{F}[\mathbb{E}]$ in
correspondence with (\ref{3.3}). We were introduce the filter of
neighborhoods of $x$ in the sense of \cite[ch. I]{34}. In the
following,
$$\mathrm{cl}(A,\tau) \df \{x\in \mathbb{E}\,|\,A \cap H \neq \emptyset\ \ \fo H\in
N_\tau(x)\}\ \ \fo \tau\in (\mathrm{top})[\mathbb{E}]\ \ \fo A\in \cp(\mathbb{E}).$$  So,
we introduce the closure operation in a TS. Moreover, we suppose that
\begin{equation}\label{2.18}\begin{array}{c}(x-\mathrm{bas})[\tau]
\df \{\mathcal{B}\in \cp\bigl(N_\tau(x)\bigl)\,|\,\fo A\in
N_\tau(x)\ \ \exists B\in \mathcal{B}:\,B\subset A\}\\
\fo \tau\in (\mathrm{top})[\mathbb{E}]\ \ \fo x\in \mathbb{E}.\end
{array}\end{equation}

{\bf The filter convergence.} We follow to \cite[ch. I]{34}. Suppose
that $\fo \tau\in (\mathrm{top})[\mathbb{E}]\ \ \fo \mathcal{B}\in
\beta_o[\mathbb{E}]\ \ \fo x\in \mathbb{E}$
\bfn\label{3.4}(\mathcal{B}{\stackrel{\tau}\Longrightarrow} x)
{\stackrel{\mathrm{def}}\Longleftrightarrow} \bigl(N_\tau(x) \subset
(\mathbb{E}-\mathrm{\mathbf{fi}})[\mathcal{B}]\bigl).\efn In
addition, $\mathfrak{F}[\mathbb{E}]\subset \beta_o[\mathbb{E}];$ see
(\ref{3.1}). Therefore, we can use (\ref{3.4}) in the case of
$\mathcal{B} = \mathcal{F},$ where $\mathcal{F}\in
\mathfrak{F}[\mathbb{E}];$ we note that
$(\mathbb{E}-\mathrm{\mathbf{fi}})[\mathcal{F}] = \mathcal{F}.$
Then, by (\ref{3.4}) $\fo \tau\in (\mathrm{top})[\mathbb{E}]\ \ \fo
\mathcal{F}\in \mathfrak{F}[\mathbb{E}]\ \ \fo x\in E$
\bfn\label{3.5}(\mathcal{F}{\stackrel{\tau}\Longrightarrow} x)
\Longleftrightarrow \bigl( N_\tau(x) \subset \mathcal{F}\bigl).\efn
Of course, it is possible to use the variant of (\ref{3.5})
corresponding to the case $\mathcal{F} = \mathcal{U},$ where
$\mathcal{U}\in \mathfrak{F}_\mathbf{u}[E].$

{\bf Nets and the Moore-Smith convergence.} On the basis of
(\ref{3.5}), we can to introduce the standard Moore-Smith
convergence of nets. We call a net in the set $\mathbb{E}$ arbitrary
triplet $(D,\preceq,f),$ where $(D,\preceq)$ is a nonempty DS and
$f\in \mathbb{E}^D.$ If $(D,\preceq,f)$ is a net in the set
$\mathbb{E},$ then
\begin{equation}\label{3.6}\begin{array}{c}(\mathbb{E}-\mathrm{ass})[D;\preceq;f] \df
\bigl\{V\in \cp(\mathbb{E})\,|\,\exists d\in D\ \ \fo \delta\in D\\  \bigl((d \preceq
\delta) \Longrightarrow (f(\delta)\in V)\bigl)\bigl\}\in
\mathfrak{F}[\mathbb{E}];\end{array}
\end{equation} we obtain the filter of $\mathbb{E}$ associated with $(D,\preceq,f).$ Now,
for any topology $\tau\in (\mathrm{top})[\mathbb{E}],$ a net
$(D,\preceq,f)$ in the set $\mathbb{E},$ and $x\in\mathbb{E},$ we
suppose that
\bfn\label{3.7}\bigl((D,\preceq,f){\stackrel{\tau}\longrightarrow}\,
x\bigl){\stackrel{\mathrm{def}}\Longleftrightarrow}\bigl((\mathbb{E}-\mathrm{ass})[D;\preceq;f]
{\stackrel{\tau}\Longrightarrow}\, x\bigl).\efn From (\ref{3.5}) and
(\ref{3.6}), we obtain that (\ref{3.7}) is the ``usual'' Moore-Smith
convergence (see \cite[ch. 2]{35}). Of course, any sequence
$\mathbf{x}\df (x_i)_{i\in \mathbb{N}}\in \mathbb{E}^\mathbb{N}$
generates the net $(\mathbb{N},\le,\mathbf{x}),$ where $\le$ is the
usual order of $\mathbb{N}.$

If $\mathcal{E}\in \cp^\prime\bigl(\cp(E)\bigl),$ then a net $(D,\preceq,f)$ in
$\mathbb{E}$ is called $\mathbb{E}$-admissible if $\mathcal{E}\subset
(\mathbb{E}-\mathrm{ass})[D;\preceq;f].$ In this case, $\mathcal{E}$ can be considered as
a constraint of asymptotic character and $(D,\preceq,f)$ plays the role of nonsequential
(generally speaking) approximate solution.

In conclusion, we note that
\begin{equation}\label{3.8}(\mathbb{E}-\mathrm{ult})[x] \df \{H\in
\cp(\mathbb{E})\,|\,x\in H\}\in \mathfrak{F}_\mathbf{u}[\mathbb{E}]\ \ \fo x\in
\mathbb{E}.
\end{equation}
In (\ref{3.8}), trivial ultrafilters are defined.

\section{Attraction sets}
\setcounter{equation}{0}

In this section, we construct nonsequential (generally speaking)
attraction sets (AS) using  different variants of the representation
of approximate solutions. Since nets are similar to sequences very
essential, we begin our consideration with the representation (of
AS) using nets.

For a brevity, in this section we fix following two nonempty sets:
$X$ and $Y.$ In addition, under $f\in Y^X$ and $\mathcal{B}\in
\beta_o[X],$
\begin{equation}\label{4.1}f^1[\mathcal{B}]\in \beta_o[Y];
\end{equation}
of course, in (\ref{4.1}), we can use a filter or ultrafilter instead of $\mathcal{B}.$
In addition, the important property takes place: if $f\in Y^X$ and $\mathcal{B}\in
\beta_o[X],$ then
\begin{equation}\label{4.2}\bigl((X-\mathrm{\mathbf{fi}})[\mathcal{B}]\in
\mathfrak{F}_\mathbf{u}[X]\bigl) \Longrightarrow
\bigl((Y-\mathrm{\mathbf{fi}})\bigl[f^1[\mathcal{B}]\bigl]\in
\mathfrak{F}_\mathbf{u}[Y]\bigl).
\end{equation}
So, by (\ref{4.2}) image of an ultrafilter base is an ultrafilter
base. Of course, the image of an ultrafilter is an ultrafilter  base
also.

Introduce AS: if $f\in Y^X,\,\tau\in (\mathrm{top})[Y]$ and
$\mathcal{X}\in \cp^\prime\bigl(\cp(X)\bigl),$ then by
$(\mathbf{as})[X;Y;\tau;f;\mathcal{X}]$ we denote the set of all
$y\in Y$ for each of which there exists a net $(D,\preceq,g)$ in the
set $X$ such that
\begin{equation}\label{4.3}\bigl(\mathcal{X}\subset
(X-\mathrm{ass})[D;\preceq;g]\bigl)\,\&\,\bigl((D,\preceq,f\circ
g){\stackrel{\tau}{\longrightarrow}}\,y\bigl);
\end{equation}
we consider $(\mathbf{as})[X;Y;\tau;f;\mathcal{X}]$ as AS. In this definition, we use
nets. But, for any filter $\mathcal{F}\in \mathfrak{F}[X]$ there exists a net
$(D,\preceq,g)$ in the set $X$ for which $\mathcal{F} = (X-\mathrm{ass})[D;\preceq;g]$
(see \cite[$\S$\,1.6]{36}).

\begin{propos}\label{p4.1}{\TL} For any $f\in Y^X,\,\tau\in (\mathrm{top})[Y]$ and $\mathcal{X}\in
\cp^\prime\bigl(\cp(X)\bigl)$
\begin{equation}\label{4.4}\begin{array}{c}(\mathrm{\mathbf{as}})[X;Y;\tau;f;\mathcal{X}]
= \{y\in Y\,|\,\exists \mathcal{F}\in
\mathfrak{F}_o[X\,|\,\mathcal{X}]:\,f^1[\mathcal{F}]\,
{\stackrel{\tau}{\Longrightarrow}}\,y\}.\end{array}
\end{equation}
\end{propos}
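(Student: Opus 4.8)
The plan is to prove the set equality in (\ref{4.4}) by establishing both inclusions, using the bridge between nets and filters together with the characterization of the Moore--Smith convergence via the associated filter (\ref{3.7}) and the filter convergence (\ref{3.5}).

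\textbf{The inclusion $\subset$.} First I would take $y\in (\mathbf{as})[X;Y;\tau;f;\mathcal{X}]$, so there is a net $(D,\preceq,g)$ in $X$ with $\mathcal{X}\subset (X-\mathrm{ass})[D;\preceq;g]$ and $(D,\preceq,f\circ g)\stackrel{\tau}{\longrightarrow} y$. Set $\mathcal{F}\df (X-\mathrm{ass})[D;\preceq;g]\in \mathfrak{F}[X]$ (this is a filter by (\ref{3.6})). From $\mathcal{X}\subset \mathcal{F}$ we get $\mathcal{F}\in \mathfrak{F}_o[X\,|\,\mathcal{X}]$. The key computational step is to check that $(Y-\mathrm{ass})[D;\preceq;f\circ g] = (Y-\mathrm{\mathbf{fi}})\bigl[f^1[\mathcal{F}]\bigl]$, i.e. that the filter associated with the image net is precisely the filter generated by the image $f^1[\mathcal{F}]$ of the associated filter; this is a routine unwinding of the definitions in (\ref{3.6}) and (\ref{2.1}) using that $f(\delta)\in V \Leftrightarrow \delta \in f^{-1}(V)$. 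Granting this, $(D,\preceq,f\circ g)\stackrel{\tau}{\longrightarrow} y$ rewrites via (\ref{3.7}) and (\ref{3.5}) as $N_\tau(y)\subset (Y-\mathrm{\mathbf{fi}})\bigl[f^1[\mathcal{F}]\bigl]$, which by (\ref{3.4}) is exactly $f^1[\mathcal{F}]\stackrel{\tau}{\Longrightarrow} y$. Hence $y$ lies in the right-hand side of (\ref{4.4}).

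\textbf{The inclusion $\supset$.} Conversely, suppose $y\in Y$ and $\mathcal{F}\in \mathfrak{F}_o[X\,|\,\mathcal{X}]$ with $f^1[\mathcal{F}]\stackrel{\tau}{\Longrightarrow} y$. By the cited fact (see \cite[$\S$\,1.6]{36}) there is a net $(D,\preceq,g)$ in $X$ with $\mathcal{F} = (X-\mathrm{ass})[D;\preceq;g]$. Then $\mathcal{X}\subset \mathcal{F} = (X-\mathrm{ass})[D;\preceq;g]$ gives the first conjunct of (\ref{4.3}). For the second conjunct, I again invoke the identity $(Y-\mathrm{ass})[D;\preceq;f\circ g] = (Y-\mathrm{\mathbf{fi}})\bigl[f^1[\mathcal{F}]\bigl]$ established above, so $f^1[\mathcal{F}]\stackrel{\tau}{\Longrightarrow} y$, i.e. $N_\tau(y)\subset (Y-\mathrm{\mathbf{fi}})\bigl[f^1[\mathcal{F}]\bigl]$, translates into $(D,\preceq,f\circ g)\stackrel{\tau}{\longrightarrow} y$ by (\ref{3.5}) and (\ref{3.7}). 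Thus the net $(D,\preceq,g)$ witnesses $y\in (\mathbf{as})[X;Y;\tau;f;\mathcal{X}]$.

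\textbf{Main obstacle.} The only non-formal point is the lemma $(Y-\mathrm{ass})[D;\preceq;f\circ g] = (Y-\mathrm{\mathbf{fi}})\bigl[f^1[\mathcal{F}]\bigl]$ where $\mathcal{F} = (X-\mathrm{ass})[D;\preceq;g]$; one must verify carefully that $V\in (Y-\mathrm{ass})[D;\preceq;f\circ g]$ iff $f^{-1}(V)\in \mathcal{F}$ iff there is $B\in f^1[\mathcal{F}]$ with $B\subset V$ (using that $\mathcal{F}$ is a filter, so $f^{-1}(V)\in\mathcal{F}$ forces $f^1(f^{-1}(V))\subset V$ to be a member of $f^1[\mathcal{F}]$ below $V$, and conversely $B = f^1(F)\subset V$ with $F\in\mathcal{F}$ forces $F\subset f^{-1}(V)$, hence $f^{-1}(V)\in\mathcal{F}$). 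Everything else is a direct substitution of definitions (\ref{3.4})--(\ref{3.7}), and the reduction from nets to filters in the $\supset$ direction uses the quoted existence result. Note that the representation in (\ref{4.1})--(\ref{4.2}) about images of (ultra)filter bases is available but not strictly needed here, since the statement concerns arbitrary filters in $\mathfrak{F}_o[X\,|\,\mathcal{X}]$ rather than ultrafilters.
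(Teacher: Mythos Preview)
Your proof is correct and follows essentially the same approach as the paper: both directions hinge on the relation between $(Y-\mathrm{ass})[D;\preceq;f\circ g]$ and $(Y-\mathrm{\mathbf{fi}})\bigl[f^1[\mathcal{F}]\bigl]$ for $\mathcal{F}=(X-\mathrm{ass})[D;\preceq;g]$, together with the net--filter correspondence from \cite[\S1.6]{36}. The only difference is cosmetic: the paper verifies the needed inclusions inline (taking an $H^*\in N_\tau(y^*)$ and producing a tail set $g^1(D^*)\in\mathcal{F}^*$ whose $f$-image lies in $H^*$, and symmetrically for the reverse), whereas you isolate the identity $(Y-\mathrm{ass})[D;\preceq;f\circ g]=(Y-\mathrm{\mathbf{fi}})\bigl[f^1[\mathcal{F}]\bigl]$ as a separate lemma and apply it twice.
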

\begin{proof} Fix $f\in Y^X, \tau\in(\mathrm{top})[Y],$ and
$\mathcal{X}\in
 \cp^\prime\bigl(\cp(X)\bigl).$ Suppose that $\mathbf{A}$ and $\mathbf{B}$ are the sets
 on the left and right sides of (\ref{4.4}) respectively. Let $y^* \in \mathbf{A}.$ Then
 $y^*\in Y$ and, for a net $(D,\preceq,g)$ in $X,$ the relation (\ref{4.3}) is
 valid under $y = y^*.$
 Then, by (\ref{4.3})  \begin{equation}\label{4.5}\mathcal{F}^* \df \,(X-\mathrm{ass})
 [D;\preceq;g]\in \mathfrak{F}_o[X\,|\,\mathcal{X}].
\end{equation}
Moreover, by (\ref{3.7}) and (\ref{4.3}) $(Y-\mathrm{ass})[D;\preceq;f\circ
g]{\stackrel{\tau}{\Longrightarrow}}\,y^*.$ So, by (\ref{3.5})
\begin{equation}\label{4.6}N_\tau(y^*)\subset (Y-\mathrm{ass})[D;\preceq;f\circ
g].\end{equation} Let $H^*\in N_\tau(y^*).$ Then by (\ref{3.6}) and (\ref{4.6}), for some
$d^*\in D,$ the following property is valid: $\fo d\in D$ \begin{equation}\label{4.7}(d^*
\preceq d) \Longrightarrow \Bigl(f\bigl(g(d)\bigl)\in H^*\Bigl).\end{equation} In
addition, $D^* \,\df\,\{d\in D\,|\,d^* \preceq d\}\in \cp^\prime(D)$ and by (\ref{3.6})
and (\ref{4.5})
$$g^1(D^*)\in \mathcal{F}^*.$$ As a corollary, $f^1\bigl(g^1(D^*)\bigl) = (f\circ
g)^1(D^*) \in f^1[\mathcal{F}^*].$ But,by (\ref{4.7}) $f^1\bigl(g^1(D^*)\bigl)\subset
H^*.$ By (\ref{3.3}) $H^*\in (Y-\mathrm{\mathbf{fi}})\bigl[f^1[\mathcal{F}^*]\bigl].$
Since the choice of $H^*$ was arbitrary, the inclusion $N_\tau(y^*) \subset
(Y-\mathrm{\mathbf{fi}})\bigl[f^1[\mathcal{F}^*]\bigl]$ is established. By (\ref{3.4})
\begin{equation}\label{4.8}f^1[\mathcal{F}^*]{\stackrel{\tau}{\Longrightarrow}}\,y^*.
\end{equation}
By (\ref{4.5}) and (\ref{4.8}) $y^* \in \mathbf{B}.$ The inclusion
$\mathbf{A}\subset \mathbf{B}$ is established.

Let $y^o\in \mathbf{B}.$ Then, for $y^o\in Y,$ we have a filter $\mathcal{F}^o\in
\mathfrak{F}_o[X\,|\,\mathcal{X}]$ such that
\begin{equation}\label{4.9}f^1[\mathcal{F}^o]{\stackrel{\tau}{\Longrightarrow}}\,y^o.
\end{equation}
Choose a net $(\mathbb{D},\sqsubseteq, \varphi)$ in $X$ for which
$\mathcal{F}^o = (X-\mathrm{ass})[\mathbb{D};\sqsubseteq;\varphi].$
By (\ref{4.1}) $f^1[\mathcal{F}^o]\in \beta_o[Y]$ and, as a
corollary, by (\ref{3.4}) and (\ref{4.9})
\bfn\label{4.10}N_\tau(y^o) \subset
(Y-\mathrm{\mathbf{fi}})\bigl[f^1[\mathcal{F}^o]\bigl].\efn Then by
(\ref{3.3}) and (\ref{4.10}) we obtain that $\fo H\in N_\tau(y^o)\
\exists B\in f^1[\mathcal{F}^o]:\ B\subset H.$ Using (\ref{2.1}) we
have the property: $\fo H\in N_\tau(y^o)\ \exists F\in
\mathcal{F}^o:\,f^1(F)\subset H.$ Choose arbitrary $H^o\in
N_\tau(y^o);$ then, for some $F^o\in \mathcal{F}^o$ the inclusion
$f^1(F^o)\subset H^o$ is valid. By (\ref{3.6}) and the choice of
$(\mathbb{D},\sqsubseteq,\varphi),$ for some $d^o\in \mathbb{D},$
the following property is realized: $\fo \delta\in \mathbb{D}$
$$(d^o\sqsubseteq \delta) \Longrightarrow \bigl(\varphi(\delta)\in F^o\bigl).$$ By the
choice of $F^o$ we obtain that $\fo \delta\in \mathbb{D}$ $$(d^o\sqsubseteq \delta)
\Longrightarrow \bigl((f\circ \varphi)(\delta)\in H^o\bigl).$$ Then, $H^o\in
(Y-\mathrm{ass})[\mathbb{D};\sqsubseteq;f\circ \varphi].$ So, the important inclusion
$$N_\tau(y^o)\subset (Y-\mathrm{ass})[\mathbb{D};\sqsubseteq;f \circ \varphi]$$ is valid.
Then $(Y-\mathrm{ass})[\mathbb{D};\sqsubseteq;f\circ
\varphi]{\stackrel{\tau}{\Longrightarrow}}\,y^o$ (see (\ref{3.5})). By (\ref{3.7})
\bfn\label{4.11}(\mathbb{D},\sqsubseteq,f\circ
\varphi){\stackrel{\tau}{\longrightarrow}}\,y^o.\efn  Moreover, by the choice of
$\mathcal{F}^o$ and $(\mathbb{D},\sqsubseteq,\varphi)$ the inclusion
$$\mathcal{X}\subset (X-\mathrm{ass})[\mathbb{D};\sqsubseteq;\varphi]$$is valid. From
(\ref{4.11}), we have the inclusion $y^o\in \mathbf{A}.$ So,
$\mathbf{B}\subset \mathbf{A}$ and, as a corollary, $\mathbf{A} =
\mathbf{B}.$ \end{proof}

\begin{propos}\label{p4.2}{\TL}  For any $f\in Y^X,\,\tau\in (\mathrm{top})[Y],$ and
$\mathcal{X}\in \cp^\prime\bigl(\cp(X)\bigl)$
\bfn\label{4.12}(\mathrm{\mathbf{as}})[X;Y;\tau;f;\mathcal{X}] = \{y\in Y\,|\,\ \exists
\mathcal{U}\in \mathfrak{F}_\mathbf{u}^o[X\,|\,\mathcal{X}]:\,f^1[\mathcal{U}]
{\stackrel{\tau}{\Longrightarrow}}\,y\}.\efn
\end{propos}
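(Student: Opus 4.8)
The plan is to derive Proposition~\ref{p4.2} from Proposition~\ref{p4.1}. By~(\ref{4.4}) the set $(\mathbf{as})[X;Y;\tau;f;\mathcal{X}]$ equals $\mathbf{B} \df \{y\in Y\,|\,\exists \mathcal{F}\in \mathfrak{F}_o[X\,|\,\mathcal{X}]:\,f^1[\mathcal{F}]\,{\stackrel{\tau}{\Longrightarrow}}\,y\}$, so it suffices to show that $\mathbf{B}$ coincides with $\mathbf{C} \df \{y\in Y\,|\,\exists \mathcal{U}\in \mathfrak{F}_\mathbf{u}^o[X\,|\,\mathcal{X}]:\,f^1[\mathcal{U}]\,{\stackrel{\tau}{\Longrightarrow}}\,y\}$. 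The inclusion $\mathbf{C}\subset\mathbf{B}$ is immediate: every ultrafilter $\mathcal{U}\in \mathfrak{F}_\mathbf{u}^o[X\,|\,\mathcal{X}]$ is in particular a filter with $\mathcal{X}\subset\mathcal{U}$, hence $\mathcal{U}\in\mathfrak{F}_o[X\,|\,\mathcal{X}]$, and the convergence $f^1[\mathcal{U}]\,{\stackrel{\tau}{\Longrightarrow}}\,y$ witnesses $y\in\mathbf{B}$.

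The substantive direction is $\mathbf{B}\subset\mathbf{C}$. Fix $y\in\mathbf{B}$ and a filter $\mathcal{F}\in\mathfrak{F}_o[X\,|\,\mathcal{X}]$ with $f^1[\mathcal{F}]\,{\stackrel{\tau}{\Longrightarrow}}\,y$, i.e. (by~(\ref{3.4}), since $f^1[\mathcal{F}]\in\beta_o[Y]$ by~(\ref{4.1})) $N_\tau(y)\subset (Y-\mathrm{\mathbf{fi}})\bigl[f^1[\mathcal{F}]\bigl]$. The idea is to enlarge $\mathcal{F}$ to an ultrafilter. Since $\mathcal{F}\in\mathfrak{F}[X]$, the set $\mathfrak{F}_\mathbf{u}^o[X\,|\,\mathcal{F}]$ is nonempty (this is stated in the excerpt, right after~(\ref{3.3}): $\mathfrak{F}_\mathbf{u}^o[X\,|\,\mathcal{F}_*]\in\cp^\prime(\mathfrak{F}_\mathbf{u}[X])$ for every filter $\mathcal{F}_*$), so choose $\mathcal{U}\in\mathfrak{F}_\mathbf{u}^o[X\,|\,\mathcal{F}]$. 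Then $\mathcal{F}\subset\mathcal{U}$, and since $\mathcal{X}\subset\mathcal{F}$ we get $\mathcal{X}\subset\mathcal{U}$, so $\mathcal{U}\in\mathfrak{F}_\mathbf{u}^o[X\,|\,\mathcal{X}]$. It remains to check $f^1[\mathcal{U}]\,{\stackrel{\tau}{\Longrightarrow}}\,y$. From $\mathcal{F}\subset\mathcal{U}$ we obtain $f^1[\mathcal{F}]\subset f^1[\mathcal{U}]$ by the definition~(\ref{2.1}) of $f^1[\cdot]$, hence $(Y-\mathrm{\mathbf{fi}})\bigl[f^1[\mathcal{F}]\bigl]\subset (Y-\mathrm{\mathbf{fi}})\bigl[f^1[\mathcal{U}]\bigl]$ by monotonicity of the generated-filter operation~(\ref{3.3}); combined with $N_\tau(y)\subset (Y-\mathrm{\mathbf{fi}})\bigl[f^1[\mathcal{F}]\bigl]$ this gives $N_\tau(y)\subset (Y-\mathrm{\mathbf{fi}})\bigl[f^1[\mathcal{U}]\bigl]$, which is exactly $f^1[\mathcal{U}]\,{\stackrel{\tau}{\Longrightarrow}}\,y$ by~(\ref{3.4}) (recall $f^1[\mathcal{U}]\in\beta_o[Y]$, and indeed $(Y-\mathrm{\mathbf{fi}})\bigl[f^1[\mathcal{U}]\bigl]\in\mathfrak{F}_\mathbf{u}[Y]$ by~(\ref{4.2})). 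Therefore $y\in\mathbf{C}$.

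Combining the two inclusions yields $\mathbf{B}=\mathbf{C}$, and together with Proposition~\ref{p4.1} this proves~(\ref{4.12}). The only place requiring care is the nonemptiness of $\mathfrak{F}_\mathbf{u}^o[X\,|\,\mathcal{F}]$ and the fact that a filter is contained in some ultrafilter refining it — but this is precisely the cited property from~\cite[ch.~I]{34} recorded in the excerpt, so no new argument (beyond invoking the axiom of choice implicitly, as the paper does) is needed. I do not anticipate a genuine obstacle here; the proof is essentially a one-line reduction to Proposition~\ref{p4.1} plus the standard ultrafilter-extension lemma, with the monotonicity bookkeeping for $f^1[\cdot]$ and $(Y-\mathrm{\mathbf{fi}})[\cdot]$ being routine.
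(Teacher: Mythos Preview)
Your proof is correct and follows essentially the same route as the paper's own argument: reduce to Proposition~\ref{p4.1}, note the trivial inclusion from ultrafilters to filters, and for the reverse inclusion extend the given filter $\mathcal{F}$ to an ultrafilter $\mathcal{U}\supset\mathcal{F}$ (using $\mathfrak{F}_\mathbf{u}^o[X\,|\,\mathcal{F}]\neq\emptyset$), then pass the convergence from $f^1[\mathcal{F}]$ to $f^1[\mathcal{U}]$ via the monotonicity of $f^1[\cdot]$ and $(Y-\mathrm{\mathbf{fi}})[\cdot]$. The paper does exactly this, with only cosmetic differences in the names of the two sets being compared.
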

\begin{proof} We denote respectively by $\mathbf{F}$ an $\mathbf{U}$ the sets
on the left and right sides of (\ref{4.12}). Since
$\mathfrak{F}_\mathbf{u}^o[X\,|\,\mathcal{X}]\subset
\mathfrak{F}_o[X\,|\,\mathcal{X}],$ we have the obvious inclusion
$\mathbf{U}\subset \mathbf{F}$ (see Proposition~\ref{p4.1}). Let
$y_o\in \mathbf{F}.$ Then by Proposition~\ref{p4.1}
$f^1[\mathcal{F}]{\stackrel{\tau}{\Longrightarrow}}\,y_o$ for some
$\mathcal{F}\in \mathfrak{F}_o[X\,|\,\mathcal{X}].$ Then
$\mathcal{F}\in \mathfrak{F}[X]$ and $\mathcal{X}\subset
\mathcal{F}.$ We recall (see Section 3) that
$\mathfrak{F}_\mathbf{u}^o[X\,|\,\mathcal{F}]\in
\cp^\prime(\mathfrak{F}_\mathbf{u}[X]).$ Choose arbitrary
$\mathfrak{U}\in \mathfrak{F}_\mathbf{u}^o[X\,|\,\mathcal{F}].$ Then
$\mathfrak{U}\in \mathfrak{F}_\mathbf{u}[X]$ and $\mathcal{X}\subset
\mathcal{F}\subset \mathfrak{U}.$ Therefore, $\mathfrak{U}\in
\mathfrak{F}_\mathbf{u}^o[X\,|\,\mathcal{X}].$ Moreover, by
(\ref{2.1}) $f^1[\cf]\subset f^1[\mathfrak{U}]$ and, as a corollary,
\bfn\label{4.13}(Y-\mathrm{\mathbf{fi}})\bigl[f^1[\cf]\bigl]\subset
(Y-\mathrm{\mathbf{fi}})\bigl[f^1[\mathfrak{U}]\bigl]\efn (we recall
that by (\ref{4.1}) $f^1[\cf]\in \beta_o[Y]$ and
$f^1[\mathfrak{U}]\in \beta_o[Y]).$ By the choice of $\cf$ we have
the inclusion $$N_\tau(y_o) \subset
(Y-\mathrm{\mathbf{fi}})\bigl[f^1[\cf]\bigl]$$ (see (\ref{3.4})).
Then by (\ref{4.13}) $N_\tau(y_o) \subset
(Y-\mathrm{\mathbf{fi}})\bigl[f^1[\mathfrak{U}]\bigl]$ and, as a
corollary (see (\ref{3.4})),
$$f^1[\mathfrak{U}]{\stackrel{\tau}{\Longrightarrow}}\,y_o.$$  Then, $y_o\in \mathbf{U}.$
The inclusion $\mathbf{F}\subset \mathbf{U}$ is established.
\end{proof}

Recall that, for any family $\mathcal{X}\in \cp^\prime\bigl(\cp(X)\bigl),\ \
\{\cap\}_\mathbf{f}(\mathcal{X})\in \cp^\prime\bigl(\cp(X)\bigl)$ and $\mathcal{X}\subset
\{\cap\}_\mathbf{f}(\mathcal{X}).$ We note the following obvious

\begin{propos}\label{p4.3} For any $\mathcal{X}\in
\cp^\prime\bigl(\cp(X)\bigl),$ the equality $\mathfrak{F}_\mathbf{u}^o[X\,|\,\mathcal{X}]
= \mathfrak{F}_\mathbf{u}^o[X\,|\{\cap\}_\mathbf{f}(\mathcal{X})]$ is valid.
\end{propos}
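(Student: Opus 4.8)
The plan is to prove the set equality $\mathfrak{F}_\mathbf{u}^o[X\,|\,\mathcal{X}] = \mathfrak{F}_\mathbf{u}^o[X\,|\,\{\cap\}_\mathbf{f}(\mathcal{X})]$ by establishing both inclusions, and the whole argument reduces to an elementary observation about filters: a filter contains a family $\mathcal{X}$ if and only if it contains $\{\cap\}_\mathbf{f}(\mathcal{X})$. Since ultrafilters are in particular filters, this observation applied to every $\mathcal{U}\in\mathfrak{F}_\mathbf{u}[X]$ yields the claim directly from the definitions of the two families given in Section~3.

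For the inclusion $\mathfrak{F}_\mathbf{u}^o[X\,|\,\{\cap\}_\mathbf{f}(\mathcal{X})]\subset\mathfrak{F}_\mathbf{u}^o[X\,|\,\mathcal{X}]$, I would fix $\mathcal{U}\in\mathfrak{F}_\mathbf{u}^o[X\,|\,\{\cap\}_\mathbf{f}(\mathcal{X})]$. Then $\mathcal{U}\in\mathfrak{F}_\mathbf{u}[X]$ and $\{\cap\}_\mathbf{f}(\mathcal{X})\subset\mathcal{U}$. Recall from the definitions of Section~2 that $\mathcal{X}\subset\{\cap\}_\mathbf{f}(\mathcal{X})$ always holds; hence $\mathcal{X}\subset\mathcal{U}$, so $\mathcal{U}\in\mathfrak{F}_\mathbf{u}^o[X\,|\,\mathcal{X}]$. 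This direction is immediate and requires nothing beyond the stated containment $\mathcal{X}\subset\{\cap\}_\mathbf{f}(\mathcal{X})$.

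For the reverse inclusion $\mathfrak{F}_\mathbf{u}^o[X\,|\,\mathcal{X}]\subset\mathfrak{F}_\mathbf{u}^o[X\,|\,\{\cap\}_\mathbf{f}(\mathcal{X})]$, I would fix $\mathcal{U}\in\mathfrak{F}_\mathbf{u}^o[X\,|\,\mathcal{X}]$, so $\mathcal{U}\in\mathfrak{F}[X]$ and $\mathcal{X}\subset\mathcal{U}$. The point is to show $\{\cap\}_\mathbf{f}(\mathcal{X})\subset\mathcal{U}$. Take an arbitrary element of $\{\cap\}_\mathbf{f}(\mathcal{X})$; by definition it is of the form $\bigcap_{H\in\mathcal{K}}H$ for some $\mathcal{K}\in\mathrm{Fin}(\mathcal{X})$, i.e. a nonempty finite subfamily of $\mathcal{X}$. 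Writing $\mathcal{K}=\{H_1;\ldots;H_n\}$ with each $H_i\in\mathcal{X}\subset\mathcal{U}$, a finite induction on $n$ using the stability of filters under binary intersections (the first clause of the definition (\ref{3.1}) of $\mathfrak{F}[\mathbb{E}]$) gives $H_1\cap\cdots\cap H_n\in\mathcal{U}$. Hence every element of $\{\cap\}_\mathbf{f}(\mathcal{X})$ lies in $\mathcal{U}$, so $\{\cap\}_\mathbf{f}(\mathcal{X})\subset\mathcal{U}$, and since $\mathcal{U}$ is an ultrafilter we conclude $\mathcal{U}\in\mathfrak{F}_\mathbf{u}^o[X\,|\,\{\cap\}_\mathbf{f}(\mathcal{X})]$.

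There is essentially no obstacle here; the proposition is labelled ``obvious'' in the excerpt and the only mildly non-trivial ingredient is the finite induction establishing closure of a filter under arbitrary finite intersections of its members, which follows routinely from closure under binary intersections. If one wishes, the statement can be strengthened to hold verbatim with $\mathfrak{F}_o$ in place of $\mathfrak{F}_\mathbf{u}^o$, since the argument never uses maximality — it uses only that an ultrafilter is a filter — but for the purposes of Propositions~\ref{p4.1} and~\ref{p4.2} the ultrafilter version stated suffices.
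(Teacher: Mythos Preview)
Your proof is correct and follows essentially the same approach as the paper: one inclusion comes from $\mathcal{X}\subset\{\cap\}_\mathbf{f}(\mathcal{X})$, and the other from closure of filters under finite intersections (the paper phrases this as $\mathcal{F}=\{\cap\}_\mathbf{f}(\mathcal{F})$ for every filter $\mathcal{F}$, whence $\{\cap\}_\mathbf{f}(\mathcal{X})\subset\{\cap\}_\mathbf{f}(\mathcal{U})=\mathcal{U}$, which is just a compact way of stating your induction). Your closing remark that the argument works verbatim for $\mathfrak{F}_o$ is also made in the paper as Remark~4.1.
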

\begin{proof}
Recall that $\mathcal{X}\subset
\{\cap\}_\mathbf{f}(\mathcal{X}).$ Therefore,
$\mathfrak{F}_\mathbf{u}^o[X\,|\,\{\cap\}_\mathbf{f}(\mathcal{X})]\subset
\mathfrak{F}_\mathbf{u}^o[X\,|\,\mathcal{X}].$ On the other hand,
from (\ref{3.1}), we obtain that $\cf = \{\cap\}_\mathbf{f}(\cf) \ \
\fo \cf\in \mathfrak{F}[X].$ Then, for an ultrafilter
$\mathcal{U}\in \mathfrak{F}_\mathbf{u}^o[X\,|\,\mathcal{X}],$
$$\{\cap\}_\mathbf{f}(\mathcal{X})\subset
\{\cap\}_\mathbf{f}(\mathcal{U}) = \mathcal{U}$$ and, as a
corollary, $\mathcal{U}\in
\mathfrak{F}_\mathbf{u}^o[X\,|\,\{\cap\}_\mathbf{f}(\mathcal{X})].$
So, since the choice of $\mathcal{U}$ was arbitrary,
$\mathfrak{F}_\mathbf{u}^o[X\,|\,\mathcal{X}]\subset
\mathfrak{F}_\mathbf{u}^o[X\,|\,\{\cap\}_\mathbf{f}(\mathcal{X})]$
and, as a corollary, $\mathfrak{F}_\mathbf{u}^o[X\,|\,\mathcal{X}] =
\mathfrak{F}_\mathbf{u}^o[X\,|\,\{\cap\}_\mathbf{f}(\mathcal{X})].$
\end{proof}

\begin{cor}\label{c4.1}{\TL}
 If $f\in Y^X,\,\tau\in (\mathrm{top})[Y],$ and
$\mathcal{X}\in \cp^\prime\bigl(\cp(X)\bigl),$ then
$$(\mathrm{\mathbf{as}})[X;Y;\tau;f;\mathcal{X}] =
(\mathrm{\mathbf{as}})[X;Y;\tau;f;\{\cap\}_\mathbf{f}(\mathcal{X})].$$
\end{cor}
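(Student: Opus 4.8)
The plan is to reduce the statement to the ultrafilter description of the attraction set given by Proposition~\ref{p4.2}, and then to apply Proposition~\ref{p4.3}. First I would note that, since $\mathcal{X}\in \cp^\prime\bigl(\cp(X)\bigl)$, the transformed family $\{\cap\}_\mathbf{f}(\mathcal{X})$ also lies in $\cp^\prime\bigl(\cp(X)\bigl)$ (this is exactly the remark recorded just before Proposition~\ref{p4.3}, together with $\mathcal{X}\subset \{\cap\}_\mathbf{f}(\mathcal{X})$). Hence Proposition~\ref{p4.2} is legitimately applicable both to the family $\mathcal{X}$ and to the family $\{\cap\}_\mathbf{f}(\mathcal{X})$.

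Next I would write out the two instances of (\ref{4.12}). Applying it to $\mathcal{X}$ gives
$$(\mathrm{\mathbf{as}})[X;Y;\tau;f;\mathcal{X}] = \{y\in Y\,|\,\exists \mathcal{U}\in \mathfrak{F}_\mathbf{u}^o[X\,|\,\mathcal{X}]:\,f^1[\mathcal{U}]{\stackrel{\tau}{\Longrightarrow}}\,y\},$$
and applying it with $\{\cap\}_\mathbf{f}(\mathcal{X})$ in place of $\mathcal{X}$ gives
$$(\mathrm{\mathbf{as}})[X;Y;\tau;f;\{\cap\}_\mathbf{f}(\mathcal{X})] = \{y\in Y\,|\,\exists \mathcal{U}\in \mathfrak{F}_\mathbf{u}^o[X\,|\,\{\cap\}_\mathbf{f}(\mathcal{X})]:\,f^1[\mathcal{U}]{\stackrel{\tau}{\Longrightarrow}}\,y\}.$$
By Proposition~\ref{p4.3} we have $\mathfrak{F}_\mathbf{u}^o[X\,|\,\mathcal{X}] = \mathfrak{F}_\mathbf{u}^o[X\,|\,\{\cap\}_\mathbf{f}(\mathcal{X})]$, so the two defining conditions on $y$ are literally the same, and the two sets coincide. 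This completes the argument.

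I do not expect any real obstacle: the only point that needs checking is the applicability of Proposition~\ref{p4.2} to the transformed family, which is immediate. An alternative route would instead use the filter description (\ref{4.4}) of Proposition~\ref{p4.1}, in which case one would have to observe that $\mathfrak{F}_o[X\,|\,\mathcal{X}] = \mathfrak{F}_o[X\,|\,\{\cap\}_\mathbf{f}(\mathcal{X})]$, which holds because every filter is closed under finite intersections, i.e. $\cf = \{\cap\}_\mathbf{f}(\cf)$ for all $\cf\in \mathfrak{F}[X]$ (cf.\ the proof of Proposition~\ref{p4.3}); the ultrafilter route via Proposition~\ref{p4.3} is the shorter one, and is the one I would present.
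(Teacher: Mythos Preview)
Your proposal is correct and is exactly the approach the paper takes: it states that the corollary follows by the immediate combination of Propositions~\ref{p4.2} and~\ref{p4.3}. Your alternative route via Proposition~\ref{p4.1} and the equality $\mathfrak{F}_o[X\,|\,\mathcal{X}] = \mathfrak{F}_o[X\,|\,\{\cap\}_\mathbf{f}(\mathcal{X})]$ is also valid and is precisely the content of Remark~4.1 in the paper.
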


The corresponding proof is realized by the immediate combination of
Propositions~\ref{p4.2} and \ref{p4.3}. We note that, by definitions
of Section 2 \bfn\label{4.14}\{\cap\}_\mathbf{f}(\mathcal{X})\in
\beta[X]\ \ \fo\mathcal{X}\in \cp^\prime\bigl(\cp(X)\bigl).\efn In
connection with (\ref{4.14}), we note the following general
property. Namely, $\fo f\in Y^X\ \ \fo \tau\in (\mathrm{top})[Y]\ \
\fo \mathcal{B}\in \beta[X]$
\bfn\label{4.15}(\mathrm{\mathbf{as}})[X;Y;\tau;f;\mathcal{B}] =
\bigcap\limits_{B\in
\mathcal{B}}\mathrm{cl}\bigl(f^1(B),\tau\bigl).\efn  Then, by
(\ref{4.14}), (\ref{4.15}), and Corollary~\ref{c4.1}
\begin{equation}\label{4.16}\begin{array}{c}
(\mathrm{\mathbf{as}})[X;Y;\tau;f;\mathcal{X}] =
\bigcap\limits_{B\in\{\cap\}_\mathbf{f}(\mathcal{X})}\mathrm{cl}\bigl(f^1(B),\tau\bigl)\\
\ \fo f\in Y^X\ \ \fo \tau\in (\mathrm{top})[Y]\ \ \fo \mathcal{X}\in
\cp^\prime\bigl(\cp(X)\bigl).\end{array}
\end{equation}
In connection with (\ref{4.16}), we note that $\fo \mathcal{X}\in
\cp^\prime\bigl(\cp(X)\bigl)$ \bfn\label{4.17}\bigl(\emptyset\in
\{\cap\}_\mathbf{f}(\mathcal{X})\bigl) \Longrightarrow
(\mathfrak{F}_\mathbf{u}^o[X\,|\,\mathcal{X}] = \emptyset).\efn

{\bf Remark 4.1}. By analogy with Proposition~\ref{p4.3} we have that
$$\mathfrak{F}_o[X\,|\,\mathcal{X}] =
\mathcal{F}_o[X\,|\,\{\cap\}_\mathbf{f}(\mathcal{X})]\ \ \fo
\mathcal{X}\in \cp^\prime\bigl(\cp(X)\bigl).$$ Really, fix
$\mathcal{X}\in \cp^\prime\bigl(\cp(X)\bigl).$ Then
$\mathcal{X}\subset \{\cap\}_\mathbf{f}(\mathcal{X}).$ Therefore,
$\mathfrak{F}_o[X\,|\,\{\cap\}_\mathbf{f}(\mathcal{X})]\subset
\mathfrak{F}_o[X\,|\,\mathcal{X}].$ Let $\cf\in
\mathfrak{F}_o[X\,|\,\mathcal{X}].$ Then, $\cf\in \mathfrak{F}[X]$
and $\mathcal{X}\subset \cf.$ But, from (\ref{3.1}), we have the
equality $\cf = \{\cap\}_\mathbf{f}(\cf),$ where by the choice of
$\cf\ \ \{\cap\}_\mathbf{f}(\mathcal{X}) \subset
\{\cap\}_\mathbf{f}(\cf).$ So,
$\{\cap\}_\mathbf{f}(\mathcal{X})\subset \cf$ and, as a corollary,
$\cf\in \mathfrak{F}_o[X\,|\,\{\cap\}_\mathbf{f}(\mathcal{X})].$ The
inclusion $\mathfrak{F}_o[X\,|\,\mathcal{X}]\subset
\mathfrak{F}_o[X\,|\,\{\cap\}_\mathbf{f}(\mathcal{X})]$ is
established. So, $\mathfrak{F}_o[X\,|\,\mathcal{X}] =
\mathfrak{F}_o[X\,|\,\{\cap\}_\mathbf{f}(\mathcal{X})]. $

Returning to (\ref{4.17}), we note that by Proposition~\ref{p4.2} $\fo f\in Y^X\ \ \fo
\mathcal{X}\in \cp^\prime\bigl(\cp(X)\bigl)$ \bfn\label{4.18}\bigl(\emptyset\in
\{\cap\}_\mathbf{f}(\mathcal{X})\bigl) \Longrightarrow
\bigl((\mathrm{\mathbf{as}})[X;Y;\tau;f;\mathcal{X}] = \emptyset\ \ \fo \tau\in
(\mathrm{top})[Y]\bigl).\efn

{\bf Remark 4.2}. We have that, for the case $\emptyset\notin
\{\cap\}_\mathbf{f}(\mathcal{X}),$ it is possible that $$\exists \tau\in
(\mathrm{top})[Y]:\,(\mathrm{\mathbf{as}})[X;Y;\tau;f;\mathcal{X}] = \emptyset.$$ Indeed,
consider the case $X= Y = \mathbb{R},$ $f(x) = x\ \ \fo x\in X,\, \tau = \tau_\mathbb{R}$
is the usual $|\cdot|$-topology of real line $\mathbb{R},$ and $$\mathcal{X}=
\{[\,c,\infty[\,:\,\ c\in \mathbb{R}\} .$$ Then, $\mathcal{X}\in \beta_o[X]$ and
$\emptyset \notin \{\cap\}_\mathbf{f}(\mathcal{X}).$ Therefore, by (\ref{4.15})
$$\hspace{3.0cm}(\mathrm{\mathbf{as}})[X;Y;\tau;f;\mathcal{X}] = \bigcap\limits_{c\in
\mathbb{R}}[\,c,\infty[\,= \emptyset.\hspace{4cm} \Box$$\smallskip

It is obvious the following

\begin{propos}\label{p4.4}{\TL}  If $f\in Y^X,\, \tau\in (c-\mathrm{top})[Y],$ and $\mathcal{B}\in
\beta_o[X],$ then $$(\mathrm{\mathbf{as}})[X;Y;\tau;f;\mathcal{B}]
\neq \emptyset.$$\end{propos}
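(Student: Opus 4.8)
The plan is to reduce the statement to the finite intersection property of $\tau$-closed sets and then invoke compactness. Since $\beta_o[X]\subset\beta[X]$ and $(c-\mathrm{top})[Y]\subset(\mathrm{top})[Y]$, formula (\ref{4.15}) is applicable and yields
$$(\mathrm{\mathbf{as}})[X;Y;\tau;f;\mathcal{B}] = \bigcap\limits_{B\in\mathcal{B}}\mathrm{cl}\bigl(f^1(B),\tau\bigr),$$
so it suffices to prove that this intersection is nonempty. Here $\mathcal{B}\in\cp^\prime\bigl(\cp^\prime(X)\bigr)$, so $\mathcal{B}\neq\emptyset$ and every $B\in\mathcal{B}$ is a nonempty subset of $X$; consequently $f^1(B)\neq\emptyset$, and therefore $\mathrm{cl}\bigl(f^1(B),\tau\bigr)$ is a nonempty $\tau$-closed set for each $B\in\mathcal{B}$.

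Next I would verify that the family $\bigl\{\mathrm{cl}\bigl(f^1(B),\tau\bigr):\,B\in\mathcal{B}\bigr\}$ has the finite intersection property. Fix $n\in\mathbb{N}$ and $B_1,\ldots,B_n\in\mathcal{B}$. Iterating the defining property of a filter base (choose $C_2\in\mathcal{B}$ with $C_2\subset B_1\cap B_2$, then $C_3\in\mathcal{B}$ with $C_3\subset C_2\cap B_3$, and so on) one obtains $B_*\in\mathcal{B}$ with $B_*\subset B_1\cap\ldots\cap B_n$. Then $\emptyset\neq f^1(B_*)\subset f^1(B_i)\subset\mathrm{cl}\bigl(f^1(B_i),\tau\bigr)$ for every $i\in\overline{1,n}$, whence $\bigcap_{i=1}^n\mathrm{cl}\bigl(f^1(B_i),\tau\bigr)\neq\emptyset$.

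Finally I would use compactness in the open-cover form by which $(c-\mathrm{top})[Y]$ is defined. Arguing by contradiction, suppose $\bigcap_{B\in\mathcal{B}}\mathrm{cl}\bigl(f^1(B),\tau\bigr)=\emptyset$. Passing to complements in $Y$, the family $\xi\df\bigl\{Y\setminus\mathrm{cl}\bigl(f^1(B),\tau\bigr):\,B\in\mathcal{B}\bigr\}$ is a nonempty subfamily of $\tau$ (each member is $\tau$-open, being the complement of a $\tau$-closed set) whose union is $Y$. By the definition of a compact topology there is a finite $\mathcal{K}\subset\xi$ that still covers $Y$; choosing $B_1,\ldots,B_n\in\mathcal{B}$ with $\mathcal{K}\subset\bigl\{Y\setminus\mathrm{cl}\bigl(f^1(B_i),\tau\bigr):\,i\in\overline{1,n}\bigr\}$ one gets $Y=\bigcup_{i=1}^n\bigl(Y\setminus\mathrm{cl}\bigl(f^1(B_i),\tau\bigr)\bigr)$, that is $\bigcap_{i=1}^n\mathrm{cl}\bigl(f^1(B_i),\tau\bigr)=\emptyset$, contradicting the previous paragraph. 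Hence the intersection is nonempty and, by the displayed identity, $(\mathrm{\mathbf{as}})[X;Y;\tau;f;\mathcal{B}]\neq\emptyset$.

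The only mildly delicate point is the iteration of the filter-base axiom to dominate an arbitrary finite subfamily of $\mathcal{B}$ by a single element $B_*\in\mathcal{B}$ (together with the attendant remark that $B_*$, hence $f^1(B_*)$, is nonempty, which is immediate from $\mathcal{B}\subset\cp^\prime\bigl(\cp^\prime(X)\bigr)$); the remainder is the standard equivalence between compactness and the finite intersection property, rewritten in the notation of Section~2.
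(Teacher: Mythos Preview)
Your proof is correct and follows essentially the same route as the paper: reduce via (\ref{4.15}) to the intersection of the closed sets $\mathrm{cl}\bigl(f^1(B),\tau\bigr)$, verify that this family is centered using the filter-base property of $\mathcal{B}$, and conclude by compactness. The paper packages the centeredness step by observing that $\mathcal{T}=\{\mathrm{cl}(f^1(B),\tau):B\in\mathcal{B}\}\in\beta_o[Y]$ and citing a reference, whereas you spell out the iteration and the open-cover contradiction explicitly, but the underlying argument is the same.
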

\begin{proof}
The corresponding proof follows from known statements of general
topology (see \cite[ch.\,I]{34}). But, we consider this proof for a
completeness of the account. In our case, we have (\ref{4.15}). In
addition,
\bfn\label{4.19}\mathcal{T}\,\df\,\bigl\{\mathrm{cl}\bigl(f^1(B),\tau\bigl):\,B\in
\mathcal{B}\bigl\}\efn  is nonempty family of sets closed in the
compact topological space (TS) $(Y,\tau).$ Moreover, $\mathcal{T}\in
\beta_o[Y]$ (we use known properties of the closure operation and
the image operation). Since $\emptyset \notin \mathcal{B},$ we
obtain that $\emptyset\notin \mathcal{T}.$ In addition,
$\mathcal{T}\in \beta[Y].$ Therefore, by \cite[(3.3.16)]{32} we have
the following property: if $n\in \mathbb{N}$ and
$$(T_i)_{i\in\ov{1,n}}:\,\ov{1,n}\longrightarrow \mathcal{T},$$ then $\exists T\in
\mathcal{T}:\,T \subset \bigcap\limits_{i=1}^nT_i.$ As a corollary,
$\mathcal{T}$ is the nonempty centered system of closed sets in a
compact TS. Then, the intersection of all sets of $\mathcal{T}$ is
not empty. By (\ref{4.19}) $$\bigcap\limits_{B\in
\mathcal{B}}\mathrm{cl}\bigl(f^1(B),\tau\bigl) =
\bigcap\limits_{T\in \mathcal{T}} T \neq \emptyset.$$ Using
(\ref{4.15}), we obtain the required statement about the
nonemptyness of attraction set. \end{proof}

\begin{cor}\label{c4.2}{\TL} If $f\in Y^X$ and $\mathcal{X}\in \cp^\prime\bigl(\cp(X)\bigl),$
then
$$\bigl(\emptyset \notin \{\cap\}_\mathbf{f}(\mathcal{X})\bigl) \Longrightarrow
\bigl((\mathrm{\mathbf{as}})[X;Y;\tau;f;\mathcal{X}] \neq \emptyset\ \ \fo \tau\in
(c-\mathrm{top})[Y]\bigl).$$
\end{cor}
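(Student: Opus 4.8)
Corollary~\ref{c4.2} is an immediate consequence of Proposition~\ref{p4.4} once we pass from the asymptotic constraint family $\mathcal{X}$ to the associated filter base $\{\cap\}_\mathbf{f}(\mathcal{X})$. Recall that, for every $\mathcal{X}\in \cp^\prime\bigl(\cp(X)\bigl)$, we have $\{\cap\}_\mathbf{f}(\mathcal{X})\in \beta[X]$ by (\ref{4.14}), and that the representation (\ref{4.16}) reduces the attraction set built from $\mathcal{X}$ to one built from $\{\cap\}_\mathbf{f}(\mathcal{X})$.

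\begin{proof}
Fix $f\in Y^X$ and $\mathcal{X}\in \cp^\prime\bigl(\cp(X)\bigl),$ and suppose that $\emptyset\notin \{\cap\}_\mathbf{f}(\mathcal{X}).$ Then, by (\ref{2.2}) and the definition of $\{\cap\}_\mathbf{f},$ the family $\mathcal{B}\,\df\,\{\cap\}_\mathbf{f}(\mathcal{X})$ satisfies $\mathcal{B}\in \beta[X]$ (see (\ref{4.14})) and $\emptyset\notin \mathcal{B};$ hence $\mathcal{B}\in \cp^\prime\bigl(\cp^\prime(X)\bigl)$ and, by the definition of $\beta_o[X],$ we obtain $\mathcal{B}\in \beta_o[X].$ Now let $\tau\in (c-\mathrm{top})[Y].$ By Proposition~\ref{p4.4} applied to $f$ and this filter base $\mathcal{B},$ we have $(\mathrm{\mathbf{as}})[X;Y;\tau;f;\mathcal{B}]\neq \emptyset.$ But by Corollary~\ref{c4.1} (or directly by (\ref{4.16})) $(\mathrm{\mathbf{as}})[X;Y;\tau;f;\mathcal{X}] = (\mathrm{\mathbf{as}})[X;Y;\tau;f;\{\cap\}_\mathbf{f}(\mathcal{X})] = (\mathrm{\mathbf{as}})[X;Y;\tau;f;\mathcal{B}]\neq \emptyset.$ Since $\tau\in (c-\mathrm{top})[Y]$ was arbitrary, the required implication follows.
\end{proof}

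The only point that needs care is the elementary observation that the hypothesis $\emptyset\notin \{\cap\}_\mathbf{f}(\mathcal{X})$ is exactly what upgrades the family $\{\cap\}_\mathbf{f}(\mathcal{X})\in \beta[X]$ to a genuine filter base in $\beta_o[X],$ so that Proposition~\ref{p4.4} is applicable; everything else is a direct substitution using Corollary~\ref{c4.1}. I do not anticipate any real obstacle here — this is a packaging of Proposition~\ref{p4.4} together with the reduction of $\mathcal{X}$ to its finite-intersection base, and the compactness of $\tau$ is used only inside Proposition~\ref{p4.4}, via the centered-system characterization cited there.
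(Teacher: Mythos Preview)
Your proof is correct and follows essentially the same approach as the paper: both pass from $\mathcal{X}$ to $\mathcal{B}=\{\cap\}_\mathbf{f}(\mathcal{X})$, observe that the hypothesis $\emptyset\notin\mathcal{B}$ upgrades $\mathcal{B}\in\beta[X]$ (from (\ref{4.14})) to $\mathcal{B}\in\beta_o[X]$, apply Proposition~\ref{p4.4}, and then invoke Corollary~\ref{c4.1}. There is no substantive difference between the two arguments.
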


\begin{proof} Let $\emptyset \notin
\{\cap\}_\mathbf{f}(\mathcal{X}).$ Choose arbitrary topology
$\tau\in (c-\mathrm{top})[Y].$ By (\ref{4.14}) $
\{\cap\}_\mathbf{f}(\mathcal{X})\in \beta[X].$ Moreover,
$\mathcal{X}\subset \{\cap\}_\mathbf{f}(\mathcal{X}).$ Therefore,
$\emptyset\notin \mathcal{X}.$ Then, $
\{\cap\}_\mathbf{f}(\mathcal{X})\in \beta_o[X]$ and by
Proposition~\ref{p4.4}
$$(\mathrm{\mathbf{as}})[X;Y;\tau;f;\{\cap\}_\mathbf{f}(\mathcal{X})] \neq \emptyset.$$
Using Corollary~\ref{c4.1}, we obtain that
$(\mathrm{\mathbf{as}})[X;Y;\tau;f;\mathcal{X}] \neq \emptyset. $
\end{proof}

In the following, we use the continuity  notion. In this connection, suppose that
\begin{equation}\label{4.20}\begin{array}{c} C(X,\tau_1,Y,\tau_2) \,\df\,\bigl\{f\in
Y^X\,|\,f^{-1}(G) \in \tau_1\ \ \fo G\in \tau_2\bigl\}\\  \fo \tau_1\in
(\mathrm{top})[X]\ \ \fo \tau_2\in (\mathrm{top})[Y].\end{array}\end{equation} So,
continuous functions are defined. In the following, we use bijections, open and closed
mappings, and homeomorphisms. Let
\begin{equation}\label{4.21}\begin{array}{c}(\mathrm{bi})[X;Y]\,\df\,\biggl\{f\in
Y^X\,|\,\bigl(f^1(X) = Y\bigl)\,\&\,\biggl(\fo x_1\in X\ \ \fo x_2\in X\\
\Bigl(\bigl(f(x_1) = f(x_2)\bigl) \Longrightarrow (x_1 =
x_2)\Bigl)\biggl)\biggl\}.\end{array}\end{equation} In (\ref{4.21}),
the set of all bijections from $X$ onto $Y$ is defined. If
$\tau_1\in (\mathrm{top})[X]$ and $\tau_2\in(\mathrm{top})[Y],$ then
\begin{equation}\label{4.22}\begin{array}{c}C_{\mathrm{op}}(X,\tau_1,Y,\tau_2) = \bigl\{f\in
C(X,\tau_1,Y,\tau_2)\,|\,f^1(G)\in \tau_2\ \ \fo G\in
\tau_1\bigl\},\end{array}\end{equation}
\begin{equation}\label{4.23}\begin{array}{c}C_{\mathrm{cl}}(X,\tau_1,Y,\tau_2) = \bigl\{f\in
C(X,\tau_1,Y,\tau_2)\,|\,f^1(F)\in \mathbf{C}_Y[\tau_2]\\ \ \fo F\in
\mathbf{C}_X[\tau_1]\bigl\}.\end{array}\end{equation} In (\ref{4.22}) (in (\ref{4.23})),
we consider open (closed) mappings. In addition,
\begin{equation}\label{4.24}\begin{array}{c}(\mathrm{Hom})[X;\tau_1;Y;\tau_2]\,\df\,
C_\mathrm{op}(X,\tau_1,Y,\tau_2)\,\cap\,(\mathrm{bi})[X;Y] =\\
= C_\mathrm{cl}(X,\tau_1,Y,\tau_2)\,\cap\,(\mathrm{bi})[X;Y]\ \ \fo \tau_1\in
(\mathrm{top})[X]\ \ \fo \tau_2\in (\mathrm{top})[Y].\end{array}\end{equation} So, in
(\ref{4.24}), the set of homeomorphisms is defined.

\section{Some properties of ultrafilters of measurable spaces}
\setcounter{equation}{0}

In this section, we fix a nonempty set $\mathbf{E}.$ We consider the very general
measurable space $(\mathbf{E},\cl),$ where $\cl\in \pi[\mathbf{E}]$ is fixed also.
According to necessity, we will be supplement the corresponding suppositions with respect
to $\cl.$ We suppose that $\mathbb{F}^*(\cl)$ is the set of all families $\mathcal{F}\in
\cp^\prime(\cl)$ such that $$(\emptyset \notin \cf)\,\&\,\bigl(A \cap B\in \cf\ \ \fo
A\in \cf\ \ \fo B\in \cf)\,\&$$ $$\&\,(\fo F\in \cf\ \ \fo L\in \cl\ \ (F\subset L)
\Longrightarrow (L\in \cf)\bigl).$$ Elements of the set $\mathbb{F}^*(\cl)$ are filters
of $\cl.$ In addition,
\begin{equation}\label{5.0}\begin{array}{c}\mathbb{F}_o^*(\cl)\,\df\,\bigl\{\mathcal{U}\in
\mathbb{F}^*(\cl)\,|\,\fo \cf\in \mathbb{F}^*(\cl)\ \ \bigl((\mathcal{U}\subset
\cf)\Longrightarrow (\mathcal{U} = \cf)\bigl)\bigl\}\end{array}\end{equation} is the set
of all ultrafilters of $\cl.$ Recall that (see \cite[p.\,29]{39})
\begin{equation}\label{5.1}\fo \cf\in \mathbb{F}^*(\cl)\ \ \exists
\mathcal{U}\in \mathbb{F}_o^*(\cl):\,\cf\subset \mathcal{U}\end{equation} In the
following, (\ref{5.1}) plays the very important role.

We introduce the mapping $\Phi_\cl:\,\cl \rightarrow
\cp\bigl(\mathbb{F}_o^*(\cl)\bigl)$ by the following rule:
\bfn\label{5.2}\Phi_\cl(L)\,\df\,\{\mathcal{U}\in
\mathbb{F}_o^*(\cl)\,|\,L\in \mathcal{U}\}\ \ \fo L\in \cl.\efn We
note that $\{\mathbf{E}\}\in \mathbb{F}^*(\cl)$ and by (\ref{5.1})
$\mathbb{F}_o^*(\cl)\neq \emptyset.$ In addition, we recall that
(see Section 2)
\bfn\label{5.3}(\mathbb{UF})[\mathbf{E};\cl]\,\df\,\{\Phi_\cl(L):\,L\in\cl\}\in
\pi[\mathbb{F}_o^*(\cl)];\efn by (\ref{5.3}) the pair
$(\mathbb{F}_o^*(\cl),(\mathbb{UF})[\mathbf{E};\cl])$ is a nonempty
multiplicative space. We note some simplest general properties. We
obtain that
$$(\mathcal{E}-\mathrm{set})[\mathbf{E}]\,\df\,\{A\in \cp(\mathbf{E})\,|\,A \cap S\neq \emptyset
\, \ \fo S\in \mathcal{E}\}\in
\cp\bigl(\cp^\prime(\mathbf{E})\bigl)\ \ \fo \mathcal{E}\in
\cp^\prime\bigl(\cp(E)\bigl).$$ We note that $\mathcal{B}|_A\in
\beta_o[A]\ \ \fo \mathcal{B}\in \beta_o[\mathbf{E}]\ \ \fo A\in
(\mathcal{B}-\mathrm{set})[\mathbf{E}].$ In addition, for $A\in
\cp^\prime(\mathbf{E})$ the inclusion $\beta_o[A] \subset
\beta_o[\mathbf{E}]$ takes place. Therefore,
\bfn\label{5.4}\mathcal{B}|_A\in \beta_o[\mathbf{E}]\ \ \fo
\mathcal{B}\in \beta_o[\mathbf{E}]\ \ \fo A\in
(\mathcal{B}-\mathrm{set})[\mathbf{E}].\efn With the employment of
(\ref{5.4}), we obtain that, for any $\mathcal{B}\in
\beta_o[\mathbf{E}]$ and $A\in
(\mathcal{B}-\mathrm{set})[\mathbf{E}]$
\begin{equation}\label{5.5}\begin{array}{c}(\mathbf{E}-\mathrm{\mathbf{fi}})[\mathcal{B}|_A]\in
\mathfrak{F}[\mathbf{E}]:\,\bigl((\mathbf{E}-\mathrm{\mathbf{fi}})[\mathcal{B}]\subset
(\mathbf{E}-\mathrm{\mathbf{fi}})[\mathcal{B}|_A]\bigl)\,\&\\
\&\,\bigl(A\in
(\mathbf{E}-\mathrm{\mathbf{fi}})[\mathcal{B}|_A]\bigl).\end{array}\end{equation}
Now, we return to the space $(\mathbf{E},\cl).$ Suppose that
\begin{equation}\label{5.6}\begin{array}{c}\beta_\cl^o[\mathbf{E}]\,\df\,\bigl\{\mathcal{B}\in
\cp^\prime(\cl)\,|\,(\emptyset\notin \mathcal{B})\,\&\\ \&\,(\fo
B_1\in \mathcal{B}\ \ \fo B_2\in \mathcal{B}\ \ \exists B_3\in
\mathcal{B}:\,B_3\subset B_1 \cap B_2)\bigl\}
\end{array}\end{equation} (the set of filter bases of $\cl);$  $\mathbb{F}^*(\cl) \subset
\beta_\cl^o[\mathbf{E}]$ and $$\beta_\cl^o[\mathbf{E}] =
\beta_o[\mathbf{E}] \cap \cp(\cl) = \{\mathcal{B}\in
\beta_o[\mathbf{E}]\,|\,\mathcal{B}\subset \cl\}.$$ We note the
obvious property: $\cf \cap \cl \in \mathbb{F}^*(\cl)\ \ \fo \cf \in
\mathfrak{F}[\mathbf{E}].$ In addition,
$$(\mathbf{E}-\mathrm{\mathbf{fi}})[\mathcal{B}
|\,\cl]\,\df\,(\mathbf{E}-\mathrm{\mathbf{fi}})[\mathcal{B}] \cap
\cl = $$ $$= \{L\in \cl |\,\exists B\in \mathcal{B}:\,B\subset
L\}\in \mathbb{F}^*(\cl)\ \ \fo \mathcal{B}\in
\beta_\cl^o[\mathbf{E}].$$ Using (\ref{5.4}) and the obvious
inclusion $\beta_\cl^o[\mathbf{E}]\subset \beta_o[\mathbf{E}],$
under $\mathcal{B}\in \beta_\cl^o[\mathbf{E}]$ and $A\in
(\mathcal{B}-\mathrm{set})[\mathbf{E}] \cap \cl,$ we obtain, that
$\mathcal{B}|_A \in \beta_\cl^o[\mathbf{E}].$ We note that, under
$\mathcal{B}\in \beta_\cl^o[\mathbf{E}]$ and $A\in
(\mathcal{B}-\mathrm{set})[\mathbf{E}] \cap \cl,$ the filter
$$(\mathbf{E}-\mathrm{\mathbf{fi}})[\mathcal{B}|_A\,|\,\cl]\in \mathbb{F}^*(\cl)$$
has the following properties
\begin{equation}\label{5.6`}\begin{array}{c}\bigl((\mathbf{E}-\mathrm{\mathbf{fi}})
[\mathcal{B}|\,\cl]\subset
(\mathbf{E}-\mathrm{\mathbf{fi}})[\mathcal{B}|_A\,|\,\cl]\bigl)\,\&\,\bigl(A\in
(\mathbf{E}-\mathrm{\mathbf{fi}})[\mathcal{B}|_A\,|\,\cl]\bigl).\end{array}\end{equation}
Of course, $(\mathbf{E}-\mathrm{\mathbf{fi}})[\cf |\,\cl] = \cf\ \
\fo \cf\in \mathbb{F}^*(\cl).$ We can use this property in
(\ref{5.6`}): for any $\cf\in \mathbb{F}^*(\cl)$ and $A\in
(\cf-\mathrm{set})[\mathbf{E}] \cap \cl,$ the filter
$(\mathbf{E}-\mathrm{\mathbf{fi}})[\cf|_A\,|\,\cl]\in
\mathbb{F}^*(\cl)$ has the properties
\bfn\label{5.7}\bigl(\cf\subset
(\mathbf{E}-\mathrm{\mathbf{fi}})[\cf|_A\,|\,\cl]\bigl)\,\&\,\bigl(A\in
(\mathbf{E}-\mathrm{\mathbf{fi}})[\cf|_A\,|\,\cl]\bigl).\efn In
connection with (\ref{5.7}), we recall the very general property: if
$\mathcal{B}\in \beta_\cl^o[\mathbf{E}]$ and $A\in
(\mathcal{B}-\mathrm{set})[\mathbf{E}] \cap \cl,$ then
$\mathcal{B}|_A\in \beta_\cl^o[\mathbf{E}].$ Using the maximality
property, we obtain that $$(\mathcal{U}-\mathrm{set})[\mathbf{E}]
\cap \cl = \mathcal{U}\ \ \fo \mathcal{U}\in \mathbb{F}_o^*(\cl).$$
And what is more, $\mathbb{F}_o^*(\cl) = \{\cf\in
\mathbb{F}^*(\cl)\,|\,\cf = (\cf-\mathrm{set})[\mathbf{E}] \cap
\cl\}.$

Of course, the above-mentioned properties are valid for
\bfn\label{5.8}\cl\in (\mathrm{LAT})_o[\mathbf{E}].\efn The
following reasoning is similar to the construction of
\cite[$\S\,$3.6]{36} connected with Wallman extension; in addition,
later until the end of this section, we suppose that (\ref{5.8}) is
valid (so, we fix a lattice with ``zero'' and ``unit'').

So, if $\cu\in \mathbb{F}_o^*(\cl),\,A\in \cl,$ and $B\in \cl,$ then
(under condition (\ref{5.8})) \bfn\label{5.9}(A \cup B\in
\mathcal{U}) \Longrightarrow \bigl((A \in \mathcal{U}) \vee (B\in
\mathcal{U})\bigl)\efn The property (\ref{5.9}) is basic. As a
corollary, $\fo \mathcal{U}\in \mathbb{F}_o^*(\cl)\ \ \fo A\in \cl\
 \fo B\in \cl$ \bfn\label{5.10}(A \cup B = \mathbf{E})
\Longrightarrow \bigl((A\in \mathcal{U}) \vee (B\in
\mathcal{U})\bigl)\efn We note that by (\ref{5.9}) the following
property is valid: $$\Phi_\cl(A \cup B) = \Phi_\cl(A) \cup
\Phi_\cl(B)\ \ \fo A\in \cl\ \ \fo B\in \cl.$$ As a corollary, we
obtain the property \bfn\label{5.11}(\mathbb{UF})[\mathbf{E};\cl]\in
(\mathrm{LAT})_o[\mathbb{F}_o^*(\cl)]\efn (so, under (\ref{5.8}),
the statement (\ref{5.3}) is amplified). In (\ref{5.11}), we have
the lattice of subsets of $\mathbb{F}_o^*(\cl).$ This important fact
used below.

\section{Topological properties, 1}
\setcounter{equation}{0}

As in the previous section, now we fix a nonempty set $\mathbf{E}$
and a family $\cl\in \pi[\mathbf{E}].$ We note the following obvious
property: $$\{L\in \cl\,|\,\exists B\in \mathcal{B}:\,B\subset
L\}\in \mathbb{F}^*(\cl)\ \ \fo \mathcal{B}\in
\beta_o[\mathbf{E}].$$ From definitions of the previous section, the
following known property follows: $\fo \cu_1\in \mathbb{F}_o^*(\cl)\
\ \fo \cu_2\in \mathbb{F}_o^*(\cl)$ \bfn\label{7.0}(\cu_1\neq \cu_2)
\Longrightarrow (\exists A\in \cu_1\ \ \exists B\in \cu_2:\,A \cap B
= \emptyset).\efn Moreover, we note that $\fo \cu\in
\mathbb{F}_o^*(\cl)$ \bfn\label{7.1}\{\cu\} = \bigcap\limits_{L\in
\cu}\{\cf\in \mathbb{F}_o^*(\cl)\,|\,L\in \cf\} =
\bigcap\limits_{L\in \cu} \Phi_\cl(L).\efn Moreover, we note that
$\pi[\mathbb{F}_o^*(\cl)]\subset
(\mathrm{op}-\mathrm{BAS})[\mathbb{F}_o^*(\cl)].$ Therefore, by
(\ref{5.3}) \bfn\label{7.2}(\mathbb{UF})[\mathbf{E};\cl]\in
(\mathrm{op}-\mathrm{BAS})[\mathbb{F}_o^*(\cl)].\efn As a corollary,
we obtain (see Section 2) that
\begin{equation}\label{7.3}\begin{array}{c}\mathbf{T}_\cl^*[\mathbf{E}]\,\df\,\{\cup\}
\bigl((\mathbb{UF})[\mathbf{E};\cl]\bigl) =
\{\mathbb{G}:\,\mathbb{G}\in \cp\bigl(\mathbb{F}_o^*(\cl)\bigl)\,|\\
\fo \cu\in \mathbb{G}\ \ \exists L\in \cu:\,\Phi_\cl(L)\subset
\mathbb{G}\}\in
(\mathrm{top})[\mathbb{F}_o^*(\cl)].\end{array}\end{equation} We
recall the very known definition of Hausdorff topology; namely, we
introduce the set of such topologies: if $M$ is set, then
$$(\mathrm{top})_o[M]\,\df\,\{\tau\in (\mathrm{top})[M]\,|\,\fo
m_1\in M\ \ \fo m_2\in M\setminus \{m_1\}$$ $$\exists H_1\in N_\tau(m_1)\ \ \exists
H_2\in N_\tau(m_2):\, H_1 \cap H_2 = \emptyset\}.$$  For any set $M$ we suppose that
$$(c-\mathrm{top})_o[M]\,\df\,(c-\mathrm{top})[M] \cap
(\mathrm{top})_o[M].$$  If $\tau\in (c-\mathrm{top})_o[M],$ then TS
$(M,\tau)$ is called a compactum. Then, the obvious statement
follows from the ultrafilter properties (see (\ref{5.2}),
(\ref{7.0})):\bfn\label{7.4}\mathbf{T}_\cl^*[\mathbf{E}]\in
(\mathrm{top})_o[\mathbb{F}_o^*(\cl)].\efn So, by (\ref{7.4})
$(\mathbb{F}_o^*(\cl),\mathbf{T}_\cl^*[\mathbf{E}])$ is a Hausdorff
TS. Of course, we can use the previous statements of this section in
the case of $\cl\in (\mathrm{LAT})_o[\mathbf{E}],$ obtaining the
Hausdorff topology (\ref{7.4}).  But, in the above-mentioned case,
another construction of TS is very interesting. This construction is
similar to Wallman extension (see \cite[$\S\,$3.6]{36}). Moreover,
in this connection, we note the fundamental investigation \cite{37},
where topological representations in the class of ideals are
considered. We give the basic attention to the filter consideration
in connection with construction of Section 3 concerning with the
realization  of AS. In this connection, we note that
$\cp(\mathbf{E})\in \pi[\mathbf{E}]$ and the sets
$\mathbb{F}^*\bigl(\cp(\mathbf{E})\bigl)$ and
$\mathbb{F}_o^*\bigl(\cp(\mathbf{E})\bigl)$ are defined. From
(\ref{3.1}) and definitions of Section 5, we have the equality
$\mathbb{F}^*\bigl(\cp(\mathbf{E})\bigl) =
\mathfrak{F}[\mathbf{E}].$ Moreover, from (\ref{3.2}) and the
above-mentioned definitions of Section 5, the equality
\bfn\label{7.5}\mathbb{F}_o^*\bigl(\cp(\mathbf{E})\bigl) =
\mathfrak{F}_\mathbf{u}[\mathbf{E}]\efn follows. By these properties
(see (\ref{7.5})) the constructions of Section 3 obtain
interpretation in terms of filters and ultrafilters of measurable
spaces.

Now, we note one simple property; in addition, we use the inclusion
chain $\mathbb{F}_o^*(\cl)\subset \mathbb{F}^*(\cl) \subset
\beta_o[\mathbf{E}].$ So, by (\ref{3.3}) $$
(\mathbf{E}-\mathrm{\mathbf{fi}})[\cf] \in \mathfrak{F}[\mathbf{E}]\
\ \fo \cf\in \mathbb{F}^*(\cl).$$  In particular, we have the
following property: \bfn\label{7.6}(\mathbf{E}-\mathrm{\mathbf{fi}}
)[\cu] \in \mathfrak{F}[\mathbf{E}]\ \ \fo \cu\in
\mathbb{F}_o^*(\cl).\efn  We note one general simple property;
namely, in general case of $\cl\in \pi[\mathbf{E}]$
\bfn\label{7.7}\fo \cu\in \mathbb{F}_o^*(\cl)\ \ \exists
\widetilde{\cu}\in \mathfrak{F}_\mathbf{u}[\mathbf{E}]:\, \cu =
\widetilde{\cu} \cap \cl.\efn

{\bf Remark 6.1}. We note that (\ref{7.7}) is a variant of
Proposition 2.4.1 of monograph \cite{39}. Consider the corresponding
proof. Fix $\cu\in \mathbb{F}_o^*(\cl).$ Then by (\ref{7.6})
\bfn\label{7.8}\mathcal{V}\,\df\,(\mathbf{E}-\mathrm{\mathbf{fi}})[\cu]
= \{H\in \cp(\mathbf{E})\,|\,\exists B\in \cu:\,B\subset H\}\in
\mathfrak{F}[\mathbf{E}].\efn From (\ref{7.8}), we obtain (see
Section 3) that
$\mathfrak{F}_\mathbf{u}^o[\mathbf{E}|\,\mathcal{V}]\in
\cp^\prime(\mathfrak{F}_\mathbf{u}[E]).$ Let $$\mathcal{W}\in
\mathfrak{F}_\mathbf{u}^o[\mathbf{E}|\,\mathcal{V}].$$ Then,
$\mathcal{W}\in \mathfrak{F}_\mathbf{u}[\mathbf{E}]$ and
$\mathcal{V}\subset \mathcal{W}.$ In addition (see Section 5),
$\mathcal{W} \cap \cl \in \mathbb{F}^*(\cl).$ Let $U\in \cu.$ Then,
$U\in \cl$ and, in particular, $U\in \cp(\mathbf{E}).$ By
(\ref{7.8}) $U\in \mathcal{V}$ and, as a corollary, $U\in
\mathcal{W}.$ Then, $U\in \mathcal{W} \cap \cl.$ So, the inclusion
$\cu \subset \mathcal{W} \cap \cl$ is established; we obtain that
\bfn\label{7.9}\mathcal{W} \cap \cl \in \mathbb{F}^*(\cl):\, \cu
\subset \mathcal{W} \cap \cl.\efn From (\ref{5.0}) and (\ref{7.9}),
we have the equality $\cu = \mathcal{W} \cap \cl.$ So,
$$\mathcal{W}\in \mathfrak{F}_\mathbf{u}[E]:\,\cu = \mathcal{W} \cap
\cl.$$ Since the choice of $\cu$ was arbitrary, the property
(\ref{7.7}) is established.

\section{Topological properties, 2}
\setcounter{equation}{0}

In this and following sections, we fix a nonempty set $E$ and a
lattice $\cl\in (\mathrm{LAT})_o[E].$ We consider the question about
constructing a compact $T_1$-space with ``unit''
$\mathbb{F}_o^*(\cl).$ This space is similar to Wallman extension
for a $T_1$-space. But, we not use axioms of topology and operate
lattice constructions (here, a natural analogy with constructions of
\cite[ch.\,II]{37} takes place). Later we use the following simple
statement.

\begin{propos}\label{p8.1}{\TL}  $(\mathbb{UF})[E;\cl]\in
(\mathrm{cl}-\mathrm{BAS})[\mathbb{F}_o^*(\cl)].$ \end{propos}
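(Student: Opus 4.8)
I need to verify that $(\mathbb{UF})[E;\cl] \df \{\Phi_\cl(L):\,L\in\cl\}$ belongs to $(\mathrm{cl}-\mathrm{BAS})[\mathbb{F}_o^*(\cl)]$, i.e. satisfies the three requirements of (\ref{2.12}) with the ambient set being $\mathbb{F}_o^*(\cl)$: (a) $\mathbb{F}_o^*(\cl)\in (\mathbb{UF})[E;\cl]$; (b) $\bigcap_{L\in\cl}\Phi_\cl(L) = \emptyset$; (c) for any $A,B\in\cl$ and any $\cu\in\mathbb{F}_o^*(\cl)\setminus(\Phi_\cl(A)\cup\Phi_\cl(B))$ there is $C\in\cl$ with $\Phi_\cl(A)\cup\Phi_\cl(B)\subset\Phi_\cl(C)$ and $\cu\notin\Phi_\cl(C)$.

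**Plan.** For (a), I use that $E\in\cl$ (since $\cl\in(\mathrm{LAT})_o[E]$ via (\ref{5.8})) and that $E\in\cf$ for every $\cf\in\mathbb{F}^*(\cl)$ — indeed $\{E\}\in\mathbb{F}^*(\cl)$ and any filter of $\cl$ contains $E$ as the top element; hence $\Phi_\cl(E) = \mathbb{F}_o^*(\cl)$, so $\mathbb{F}_o^*(\cl)\in(\mathbb{UF})[E;\cl]$. For (b), I invoke (\ref{7.1}): for each fixed $\cu\in\mathbb{F}_o^*(\cl)$ we have $\{\cu\} = \bigcap_{L\in\cu}\Phi_\cl(L)$, and since $\emptyset\notin\cu$ while $\emptyset\in\cl$, the set $\Phi_\cl(\emptyset) = \{\cu'\in\mathbb{F}_o^*(\cl):\emptyset\in\cu'\} = \emptyset$; thus $\bigcap_{L\in\cl}\Phi_\cl(L)\subset\Phi_\cl(\emptyset)=\emptyset$. (Alternatively, $\mathbb{F}_o^*(\cl)\ne\emptyset$ by (\ref{5.1}) applied to $\{E\}$, and no ultrafilter contains $\emptyset$.)

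**The main step is (c).** Given $A,B\in\cl$ and $\cu\in\mathbb{F}_o^*(\cl)$ with $A\notin\cu$ and $B\notin\cu$, I want $C\in\cl$ with $A\subset C$, $B\subset C$, and $C\notin\cu$. The natural candidate is $C\df A\cup B$, which lies in $\cl$ because $\cl$ is a lattice. Then $A\subset C$ and $B\subset C$, so $\Phi_\cl(A)\subset\Phi_\cl(C)$ and $\Phi_\cl(B)\subset\Phi_\cl(C)$ by monotonicity of $\Phi_\cl$ (if $L_1\subset L_2$ in $\cl$ and $L_1\in\cf$ then $L_2\in\cf$ for a filter $\cf$). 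It remains to check $C = A\cup B\notin\cu$: this is exactly the contrapositive of the basic disjunction property (\ref{5.9}) — if $A\cup B\in\cu$ then $A\in\cu$ or $B\in\cu$, contradicting the hypothesis. So $\cu\notin\Phi_\cl(C)$, and (c) holds. I expect the only subtlety is making explicit that (\ref{5.9}) (which requires the lattice hypothesis (\ref{5.8}), in force throughout this section) is precisely what powers the ``$x\notin B_3$'' clause of the closed-base axiom; everything else is a routine unwinding of (\ref{2.12}), (\ref{5.2}), and the filter axioms. Finally I assemble (a), (b), (c) to conclude $(\mathbb{UF})[E;\cl]\in(\mathrm{cl}-\mathrm{BAS})[\mathbb{F}_o^*(\cl)]$.
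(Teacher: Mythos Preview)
Your proof is correct and follows essentially the same route as the paper's: the paper simply cites (\ref{5.11}) (the lattice property of $(\mathbb{UF})[E;\cl]$) to get $B_1\cup B_2\in(\mathbb{UF})[E;\cl]$ and takes $B_3=B_1\cup B_2$, while you unpack that step by invoking (\ref{5.9}) directly to show $\Phi_\cl(A)\cup\Phi_\cl(B)=\Phi_\cl(A\cup B)$---which is exactly how the paper derives (\ref{5.11}) in the first place. The treatment of the ``unit'' $\Phi_\cl(E)=\mathbb{F}_o^*(\cl)$ and the empty intersection via $\Phi_\cl(\emptyset)=\emptyset$ is identical.
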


\begin{proof} We use (\ref{5.11}). In particular, $\emptyset\in
(\mathbb{UF})[E;\cl].$ As a corollary,
\bfn\label{8.1}\bigcap\limits_{\zeta\in (\mathbb{UF})[E;\cl]} \zeta
= \emptyset.\efn  Moreover, $\mathbb{F}_o^*(\cl) = \Phi_\cl(E) \in
(\mathbb{UF})[E;\cl]$ (see (\ref{5.3})). So, $(\mathbb{UF})[E;\cl]$
is a family with ``zero'' and ``unit''. Moreover, by (\ref{5.11})
$$B_1 \cup B_2 \in (\mathbb{UF})[E;\cl]\ \ \fo B_1\in
(\mathbb{UF})[E;\cl]\ \ \fo B_2 \in (\mathbb{UF})[E;\cl].$$
Therefore, by (\ref{2.12}) the required statement is realized.
\end{proof}

By (\ref{2.14}) and Proposition~\ref{p8.1} we have the following construction:
\bfn\label{8.2}\{\cap\}\bigl((\mathbb{UF})[E;\cl]\bigl)\in
(\mathrm{clos})[\mathbb{F}_o^*(\cl)].\efn

\begin{propos}\label{p8.2}{\TL}  The following compactness property is valid:
\bfn\label{8.3}\mathbf{C}_{\mathbb{F}_o^*(\cl)}\Bigl(\{\cap\}\bigl((\mathbb{UF})[E;\cl]\bigl)
\Bigl)\in (c-\mathrm{top})[\mathbb{F}_o^*(\cl)].\efn \end{propos}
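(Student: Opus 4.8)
The plan is to show that the topology
$\mathbf{C}_{\mathbb{F}_o^*(\cl)}\bigl(\{\cap\}((\mathbb{UF})[E;\cl])\bigl)$
is compact by verifying the finite intersection property for its closed sets. By Proposition~\ref{p8.1} and (\ref{2.16}), the family $(\mathbb{UF})[E;\cl]=\{\Phi_\cl(L):L\in\cl\}$ is a closed base, so every closed set is an intersection of sets $\Phi_\cl(L)$; consequently it suffices to show: if $\mathcal{L}_0\in\cp^\prime(\cl)$ is a subfamily such that $\bigcap_{L\in\mathcal{K}}\Phi_\cl(L)\neq\emptyset$ for every finite $\mathcal{K}\in\mathrm{Fin}(\mathcal{L}_0)$, then $\bigcap_{L\in\mathcal{L}_0}\Phi_\cl(L)\neq\emptyset$.

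First I would translate the finite-intersection hypothesis into lattice language. Since $\Phi_\cl(L_1)\cap\dots\cap\Phi_\cl(L_n)=\{\mathcal{U}\in\mathbb{F}_o^*(\cl):L_1,\dots,L_n\in\mathcal{U}\}$ and filters are closed under finite intersections and supersets, a point of this set exists iff there is an ultrafilter of $\cl$ containing $L_1\cap\dots\cap L_n$; in particular the hypothesis forces $L_1\cap\dots\cap L_n\neq\emptyset$ (otherwise $\emptyset\in\mathcal{U}$, impossible), and more precisely it forces that $L_1\cap\dots\cap L_n$ lies in some element of $\mathbb{F}^*(\cl)$. The clean way to package this: consider $\mathcal{B}\df\{\cap\}_{\mathbf f}(\mathcal{L}_0)$, the family of all finite intersections of members of $\mathcal{L}_0$. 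Since $\cl\in\pi[E]$ is a $\pi$-system, $\mathcal{B}\subset\cl$, and $\mathcal{B}$ is closed under finite intersections, hence is a filter base in $\beta_\cl^o[E]$ provided $\emptyset\notin\mathcal{B}$. The key claim is that the finite-intersection hypothesis on the $\Phi_\cl(L)$ is exactly equivalent to $\emptyset\notin\mathcal{B}$: indeed if $L_1\cap\dots\cap L_n\neq\emptyset$ then $\{H\in\cl:L_1\cap\dots\cap L_n\subset H\}\in\mathbb{F}^*(\cl)$ (it is a filter of $\cl$ containing each $L_i$), and by (\ref{5.1}) it extends to an ultrafilter $\mathcal{U}\in\mathbb{F}_o^*(\cl)$ with $L_1,\dots,L_n\in\mathcal{U}$, giving $\bigcap_{i}\Phi_\cl(L_i)\neq\emptyset$.

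Granting $\emptyset\notin\mathcal{B}$, the finish is quick: the filter $\mathcal{G}\df\{L\in\cl:\exists B\in\mathcal{B}\ B\subset L\}=(\mathbf{E}\text{-}\mathbf{fi})[\mathcal{B}\,|\,\cl]\in\mathbb{F}^*(\cl)$ (as recorded in Section~5, using $\cl\in(\mathrm{LAT})_o[E]\subset\pi[E]$), and it contains $\mathcal{L}_0$. By (\ref{5.1}) there is $\mathcal{U}_*\in\mathbb{F}_o^*(\cl)$ with $\mathcal{G}\subset\mathcal{U}_*$, hence $\mathcal{L}_0\subset\mathcal{U}_*$, i.e.\ $L\in\mathcal{U}_*$ for every $L\in\mathcal{L}_0$, which says exactly $\mathcal{U}_*\in\Phi_\cl(L)$ for all such $L$, so $\bigcap_{L\in\mathcal{L}_0}\Phi_\cl(L)\ni\mathcal{U}_*$ is nonempty. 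By (\ref{8.2}) the family $\{\cap\}((\mathbb{UF})[E;\cl])$ is a closure system on $\mathbb{F}_o^*(\cl)$ containing $\mathbb{F}_o^*(\cl)$ itself, and the computation just given shows every centered subfamily of closed sets has nonempty intersection; translating back through the duality $\mathbf{C}_{\mathbb{F}_o^*(\cl)}$ (using (\ref{2.8}), (\ref{2.15})) yields that the corresponding topology lies in $(c\text{-}\mathrm{top})[\mathbb{F}_o^*(\cl)]$, which is (\ref{8.3}).

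The main obstacle — really the only non-bookkeeping point — is the reduction from an arbitrary centered family of \emph{closed} sets (intersections of $\Phi_\cl(L)$'s) to a centered family of \emph{basic} closed sets $\Phi_\cl(L)$, i.e.\ making sure one is allowed to test the finite intersection property on the closed base alone. This is the standard Alexander-type reduction for closure systems generated by a base closed under finite unions (which (\ref{5.11}) provides): an arbitrary closed set is $\bigcap_{L\in\mathcal{M}}\Phi_\cl(L)$ for some $\mathcal{M}$, and a family of such sets is centered iff the union of the index families is centered as a family of basic closed sets, because finite intersections of the $\Phi_\cl(L)$'s are again indexed by finite subfamilies of $\cl$. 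Once this is in place, everything else is a direct appeal to (\ref{5.1}) and the elementary closure properties of $\mathbb{F}^*(\cl)$ recorded in Section~5.
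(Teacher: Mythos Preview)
Your proof is correct and follows essentially the same route as the paper. Both arguments reduce to showing that a centered family of sets $\Phi_\cl(L)$ has nonempty intersection, build the filter base of finite intersections of the corresponding $L$'s in $\cl$, and invoke (\ref{5.1}) to extend to an ultrafilter lying in every $\Phi_\cl(L)$. The only difference is organizational: you first isolate the reduction from arbitrary closed sets to basic closed sets and then run the filter-extension argument, whereas the paper works directly with a centered family $\eta$ of closed sets and carries out the reduction inside the proof by collecting, for each $H\in\eta$, all $L\in\cl$ with $H\subset\Phi_\cl(L)$ into one family $\mathfrak{L}$ and verifying its centeredness. Your remark about needing (\ref{5.11}) or an Alexander-type argument is unnecessary: the reduction to the closed base is elementary, since each closed set is already an \emph{intersection} of basic closed sets and finite intersections of the $F_\alpha$ are automatically contained in the corresponding finite intersections of $\Phi_\cl(L)$'s.
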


\begin{proof} For brevity, we suppose that
\bfn\label{8.4}\mathfrak{U}\,\df\,\{\cap\}\bigl((\mathbb{UF})[E;\cl]\bigl)\efn and
$\theta\,\df\,\mathbf{C}_{\mathbb{F}_o^*(\cl)}[\mathfrak{U}].$ Of course, by (\ref{2.8})
$\theta\in (\mathrm{top})[\mathbb{F}_o^*(\cl)].$ Moreover, under $S\in \mathfrak{U},$ the
family
$$\mathbf{u}[S]\,\df\,\{T\in (\mathbb{UF})[E;\cl]\,|\,S\subset T\}\in
\cp^\prime\bigl((\mathbb{UF})[E;\cl]\bigl)$$ has the following
obvious property \bfn\label{8.5}S = \bigcap\limits_{T\in
\mathbf{u}[S]}T.\efn  We have the equality $\mathfrak{U} =
\mathbf{C}_{\mathbb{F}_o^*(\cl)}[\theta].$ So, $\mathfrak{U}$ is the
family of all subsets of $\mathbb{F}_o^*(\cl)$ closed in the TS
\bfn\label{8.5`}\bigl(\mathbb{F}_o^*(\cl),\theta\bigl).\efn Let
$\eta$ be arbitrary nonempty centered subfamily of $\mathfrak{U}$
(for any $m\in \mathbb{N}$ and $(T_i)_{i\in\ov{1,m}}\in \eta^m$ the
intersection of all sets $T_i,\,i\in \ov{1,m},$ is not empty). If
$H\in \eta,$ then the family
\bfn\label{8.6}\mathbb{D}_H\,\df\,\{L\in \cl\,|\,\Phi_\cl(L)\in
\mathbf{u}[H]\}\in \cp^\prime(\cl)\efn has the property:
$\mathbb{D}_H\subset \cu\ \ \fo \cu\in H.$ Of course,
$$\mathfrak{L}\,\df\, \bigcup\limits_{H\in \eta}\mathbb{D}_H\in
\cp^\prime(\cl)$$ is centered. Indeed, choose $n\in \mathbb{N}$ and
$(\Lambda_i)_{i\in \ov{1,n}}\in \mathfrak{L}^n.$ Let
$(\widetilde{H}_i)_{i\in\ov{1,n}}\in \eta^n$ be a procession with
the property:
$$\Lambda_j \in \mathbb{D}_{\widetilde{H}_j}\ \ \fo j\in \ov{1,n}.$$ Then, in particular,
$(\La_i)_{i\in\ov{1,n}}\in \cl^n.$ In addition, by (\ref{8.6})
$\Phi_\cl(\Lambda_j) \in \mathbf{u}[\widetilde{H}_j]\ \ \fo j\in
\ov{1,n}.$ Of course,
$$\bigcap\limits_{i=1}^n\widetilde{H}_i\subset\bigcap\limits_{i=1}^n
\Phi_\cl(\Lambda_i).$$ Since the intersection of all sets
$\widetilde{H}_i,\,i\in\ov{1,n},$ is not empty (we use the
centrality of $\eta),$ we choose an ultrafilter $$\widetilde{\cu}\in
\bigcap\limits_{i=1}^n \widetilde{H}_i.$$ Then, $\La_j\in
\widetilde{\cu}$ under $j\in \ov{1,n}.$ By axioms of a filter (see
Section 5) we obtain that $$\bigcap\limits_{i=1}^n \La_i \neq
\emptyset.$$ Since $\cl$ is closed with respect to finite
intersections, we obtain that
\bfn\label{8.7}\{\cap\}_\mathbf{f}(\mathfrak{L})\in
\cp^\prime(\cl):\,\bigl(\emptyset\notin
\{\cap\}_\mathbf{f}(\mathfrak{L})\bigl)\,\&\,\bigl(\mathfrak{L}\subset
\{\cap\}_\mathbf{f}(\mathfrak{L})\bigl).\efn Moreover, (\ref{8.7})
is supplemented by the following obvious property; namely, $$\fo
B_1\in \{\cap\}_\mathbf{f}(\mathfrak{L})\ \ \fo B_2\in
\{\cap\}_\mathbf{f}(\mathfrak{L})\ \ \exists B_3\in
\{\cap\}_\mathbf{f}(\mathfrak{L}):\,B_3\subset B_1 \cap B_2.$$  From
(\ref{5.6}), we obtain that $\{\cap\}_\mathbf{f}(\mathfrak{L})\in
\beta_\cl^o[E].$ As a corollary,
$$\mathcal{V}\,\df\,(E-\mathrm{\mathbf{fi}})[\{\cap\}_\mathbf{f}(\mathfrak{L})\,|\,\cl]
\in \mathbb{F}^*(\cl);$$ in addition, by (\ref{8.7})
$\mathfrak{L}\subset \{\cap\}_\mathbf{f}(\mathfrak{L})\subset
\mathcal{V}.$ Finally, we use (\ref{5.1}). Let $\mathcal{W}\in
\mathbb{F}_o^*(\cl)$ be an ultrafilter for which $\mathcal{V}\subset
\mathcal{W}.$ Then, $\mathfrak{L}\subset \mathcal{W}.$ So,
\bfn\label{8.8}\mathcal{W}\in
\mathbb{F}_o^*(\cl):\,\mathfrak{L}\subset \mathcal{W}.\efn  Let
$\mathbb{M}\in \eta.$ Then, $\mathbb{D}_\mathbb{M}\in
\cp^\prime(\mathfrak{L})$ and the equality
\bfn\label{8.9}\mathbb{M}= \bigcap\limits_{T\in
\mathbf{u}[\mathbb{M}]} T \efn is valid (see (\ref{8.5})). Choose
arbitrary $\Sigma \in \mathbf{u}[\mathbb{M}].$ Then, $\Sigma \in
(\mathbb{UF})[E;\cl]$ and $\mathbb{M}\subset \Sigma.$ Using
(\ref{5.3}), we choose $D\in \cl$ for which $\Sigma = \Phi_\cl(D).$
Then $$D\in \cl:\,\Phi_\cl(D) \in \mathbf{u}[\mathbb{M}].$$ By
(\ref{8.6}) $D\in \mathbb{D}_\mathbb{M}$ and, in particular, $D\in
\mathfrak{L}.$ By (\ref{8.8}) $D\in \mathcal{W}$ and, as a
corollary, $\mathcal{W}\in \Phi_\cl(D);$ see (\ref{5.2}). So,
$\mathcal{W}\in \Sigma.$ Since the choice of $\Sigma$ was arbitrary,
we obtain that $\mathcal{W}\in B\ \ \fo B\in
\mathbf{u}[\mathbb{M}].$ By (\ref{8.9}) $\mathcal{W}\in \mathbb{M}.$
So, we have the property:
$$\mathcal{W}\in H\ \ \fo H\in \eta.$$ Then, the intersection of all
sets of $\eta$ is not empty. Since the choice of $\eta$ was
arbitrary, it is established that any nonempty centered family of
closed (in TS (\ref{8.5`})) sets has the nonempty intersection. So,
TS (\ref{8.5`}) is compact (see \cite{34, 35, 36}). \end{proof}

Using Proposition~\ref{p8.2}, by $\mathbf{T}_\cl^o[E]$ we denote the
topology (\ref{8.3}); so,
\bfn\label{8.10}\mathbf{T}_\cl^o[E]\,\df\,\mathbf{C}_{\mathbb{F}_o^*(\cl)}\Bigl(\{\cap\}
\bigl((\mathbb{UF})[E;\cl]\bigl)\Bigl)\in
(c-\mathrm{top})[\mathbb{F}_o^*(\cl)].\efn We have the nonempty
compact TS
\bfn\label{8.11}\bigl(\mathbb{F}_o^*(\cl),\mathbf{T}_\cl^o[E]\bigl).\efn

\begin{propos}\label{p8.3}{\TL} If $\cu\in \mathbb{F}_o^*(\cl),$ then
$\{\cu\}\in \{\cap\}\bigl((\mathbb{UF})[E;\cl]\bigl).$
\end{propos}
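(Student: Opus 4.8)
The plan is to show that for a fixed ultrafilter $\cu\in\mathbb{F}_o^*(\cl)$, the singleton $\{\cu\}$ can be written as an intersection of a (nonempty) subfamily of the closed base $(\mathbb{UF})[E;\cl]$, since by definition $\{\cap\}\bigl((\mathbb{UF})[E;\cl]\bigl)$ collects exactly all such intersections $\bigcap_{H\in\mathcal{H}}H$ over $\mathcal{H}\in\mathcal{P}^\prime\bigl((\mathbb{UF})[E;\cl]\bigr)$. The obvious candidate is the family $\{\Phi_\cl(L):\,L\in\cu\}$, i.e.\ $\Phi_\cl^1[\cu]$; this is nonempty because $E\in\cu$ (as $\cl\in(\mathrm{LAT})_o[E]$ and $\cu\in\mathbb{F}_o^*(\cl)$), and it is a subfamily of $(\mathbb{UF})[E;\cl]$ by (\ref{5.3}). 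So the whole statement reduces to the identity
$$\{\cu\} = \bigcap_{L\in\cu}\Phi_\cl(L).$$

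But this is precisely (\ref{7.1}), established earlier in Section 6 (``Topological properties, 1'') as a known property of ultrafilters of a $\pi$-system: $\{\cu\} = \bigcap_{L\in\cu}\{\cf\in\mathbb{F}_o^*(\cl)\,|\,L\in\cf\} = \bigcap_{L\in\cu}\Phi_\cl(L)$. Hence the plan is simply: (i) note $\cu\in\mathcal{P}^\prime(\cl)$ and $E\in\cu$, so $\mathcal{H}\df\Phi_\cl^1[\cu]\in\mathcal{P}^\prime\bigl((\mathbb{UF})[E;\cl]\bigr)$ using (\ref{5.2}) and (\ref{5.3}); (ii) invoke (\ref{7.1}) to get $\bigcap_{H\in\mathcal{H}}H = \bigcap_{L\in\cu}\Phi_\cl(L) = \{\cu\}$; (iii) conclude $\{\cu\}\in\{\cap\}\bigl((\mathbb{UF})[E;\cl]\bigr)$ by the definition of $\{\cap\}(\cdot)$ from Section 2.

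I do not anticipate any real obstacle here; the statement is essentially a restatement of (\ref{7.1}) phrased in the language of the $\{\cap\}$-operator on the closed base. The only point meriting a line of care is the direction ``$\supset$'' in (\ref{7.1}), namely that if $L\in\cf$ for every $L\in\cu$, i.e.\ $\cu\subset\cf$, then $\cf=\cu$ — but this is just the maximality of $\cu$ in $\mathbb{F}_o^*(\cl)$ as recorded in (\ref{5.0}), so even that is already available. Therefore the proof is a two- or three-line deduction, and the ``hard part'', if any, is merely bookkeeping: checking that $\Phi_\cl^1[\cu]$ is a legitimate nonempty subfamily of $(\mathbb{UF})[E;\cl]$, which follows immediately from $E\in\cu$ and $\cl\subset\mathcal{P}(E)$.
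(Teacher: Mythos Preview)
Your proposal is correct and follows exactly the paper's approach: the paper states that the proof follows from (\ref{7.1}) together with (\ref{5.3}), which is precisely what you do—take the nonempty subfamily $\{\Phi_\cl(L):L\in\cu\}\subset(\mathbb{UF})[E;\cl]$ and invoke (\ref{7.1}) to identify its intersection with $\{\cu\}$.
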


The corresponding proof follows from (\ref{7.1}); of course, we use (\ref{5.3}) also.
From (\ref{2.17}), (\ref{8.10}), and Proposition~\ref{8.3}, we obtain the following
property: \bfn\label{8.12}\mathbf{T}_\cl^o[E]\in (c-\mathrm{top})[\mathbb{F}_o^*(\cl)]
\cap (\mathcal{D}-\mathrm{top})[\mathbb{F}_o^*(\cl)].\efn So, by (\ref{8.12}) we obtain
that (\ref{8.11}) is a nonempty compact $T_1$-space.

In conclusion of the given section, we note several properties. First, we recall that
\bfn\label{8.12`}\mathbb{F}_o^*(\cl) \setminus \Phi_\cl(L) = \{\cu\in
\mathbb{F}_o^*(\cl)\,|\,L\notin \cu\}\ \ \fo L\in \cl.\efn  In addition, from
(\ref{8.10}), the obvious representation follows:
\begin{equation}\label{8.13}\begin{array}{c}\mathbf{T}_\cl^o[E] =
\bigl\{G\in \cp\bigl(\mathbb{F}_o^*(\cl)\bigl)\,|\,\fo \cu\in G\ \
\exists L\in \cl\setminus \cu\ \ \fo \mathcal{V}\in
\mathbb{F}_o^*(\cl)\\  \bigl((L\notin \mathcal{V}) \Longrightarrow
(\mathcal{V}\in G)\bigl)\bigl\}.\end{array}\end{equation} With the
employment of (\ref{8.13}) the following statement is established.

\begin{propos}\label{p8.4}{\TL} If $\cu\in \mathbb{F}_o^*(\cl),$ then the
family $$\varphi_\cu \,\df\,\{\mathbb{F}_o^*(\cl)\setminus \Phi_\cl(L):\, L\in
\cl\setminus \cu\}$$ is a local base of TS $(\ref{8.11})$ at $\cu:$
$$\bigl(\varphi_\cu\subset N_{\mathbf{T}_\cl^o[E]}(\cu)\bigl)\,\&\,\bigl(\fo H\in
N_{\mathbf{T}_\cl^o[E]}(\cu)\ \ \exists B\in \varphi_\cu:\,B\subset H\bigl).$$
\end{propos}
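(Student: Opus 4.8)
The plan is to verify the two conjuncts directly from the representation~(\ref{8.13}) of the topology $\mathbf{T}_\cl^o[E]$, using that each $\mathbb{F}_o^*(\cl)\setminus \Phi_\cl(L)$ is precisely the set $\{\cv\in \mathbb{F}_o^*(\cl)\,|\,L\notin \cv\}$ by~(\ref{8.12`}). First I would show $\vp_\cu \subset N_{\mathbf{T}_\cl^o[E]}(\cu)$. Fix $L\in \cl\setminus\cu$ and put $G\df \mathbb{F}_o^*(\cl)\setminus \Phi_\cl(L)$. Clearly $\cu\in G$ since $L\notin\cu$. To see $G\in \mathbf{T}_\cl^o[E]$ via~(\ref{8.13}), take any $\cv\in G$; then $L\in\cl\setminus\cv$, and for every $\mathcal{W}\in \mathbb{F}_o^*(\cl)$ the implication $(L\notin\mathcal{W})\Rightarrow(\mathcal{W}\in G)$ is immediate from the definition of $G$. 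Hence $G\in N_{\mathbf{T}_\cl^o[E]}(\cu)$ (it is an open set containing $\cu$, so it belongs to the neighborhood filter by the definition of $N_\tau(\cu)$). Since $L$ was arbitrary in $\cl\setminus\cu$, we get $\vp_\cu\subset N_{\mathbf{T}_\cl^o[E]}(\cu)$.

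Next I would prove the base property: given $H\in N_{\mathbf{T}_\cl^o[E]}(\cu)$, I must produce $L\in\cl\setminus\cu$ with $\mathbb{F}_o^*(\cl)\setminus\Phi_\cl(L)\subset H$. By the definition of the neighborhood filter, there is $G\in \mathbf{T}_\cl^o[E]$ with $\cu\in G\subset H$. Applying~(\ref{8.13}) to $G$ at the point $\cu$, there exists $L\in\cl\setminus\cu$ such that for all $\cv\in\mathbb{F}_o^*(\cl)$, $(L\notin\cv)\Rightarrow(\cv\in G)$. But by~(\ref{8.12`}), $\{\cv\in\mathbb{F}_o^*(\cl)\,|\,L\notin\cv\} = \mathbb{F}_o^*(\cl)\setminus\Phi_\cl(L)$, so this says exactly $\mathbb{F}_o^*(\cl)\setminus\Phi_\cl(L)\subset G\subset H$. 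Setting $B\df \mathbb{F}_o^*(\cl)\setminus\Phi_\cl(L)\in\vp_\cu$ finishes this direction.

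The argument is essentially a translation between the two descriptions of $\mathbf{T}_\cl^o[E]$, so there is no serious obstacle; the one point that needs care is the correct unwinding of~(\ref{8.13}) — namely that the witness $L$ guaranteed there for a point $\cu\in G$ automatically satisfies $L\notin\cu$ (it is taken from $\cl\setminus\cu$) and that the set it cuts out, $\{\cv\,|\,L\notin\cv\}$, is a subset of $G$ rather than something larger. Both are read off directly once one notes, via~(\ref{5.2}) and~(\ref{8.12`}), that membership $L\in\cv$ is equivalent to $\cv\in\Phi_\cl(L)$. I would also remark in passing that $\vp_\cu\ne\emptyset$ whenever $\cu\neq\mathbb{F}_o^*(\cl)$ is not the improper filter, since $\cl$ contains sets not in $\cu$ (e.g. $\emptyset\in\cl\setminus\cu$, which gives $\mathbb{F}_o^*(\cl)\setminus\Phi_\cl(\emptyset)=\mathbb{F}_o^*(\cl)$ as the ambient neighborhood), so the local base is genuinely a base of the neighborhood filter.
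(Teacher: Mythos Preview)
Your proof is correct and is exactly the natural unpacking of what the paper calls ``obvious'' from~(\ref{8.13}); the paper gives no detailed argument beyond pointing to that representation, and your two steps (openness of each $\mathbb{F}_o^*(\cl)\setminus\Phi_\cl(L)$ via~(\ref{8.13}), then extraction of the witness $L\in\cl\setminus\cu$ from~(\ref{8.13}) for a given open $G\ni\cu$) are precisely what is intended. One minor wording issue: in your closing remark, the phrase ``whenever $\cu\neq\mathbb{F}_o^*(\cl)$ is not the improper filter'' is garbled---you mean simply that $\emptyset\in\cl\setminus\cu$ always, so $\varphi_\cu\neq\emptyset$ unconditionally.
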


The proof is obvious. So, by (\ref{2.18}) and Proposition~\ref{8.4}
$$\{\mathbb{F}_o^*(\cl)\setminus \Phi_\cl(L):\,L\in \cl\setminus \cu\}\in
(\cu-\mathrm{bas})\bigl[\mathbf{T}_\cl^o[E]\bigl]\ \ \fo \cu\in
\mathbb{F}_o^*(\cl).$$ We note that, from definitions, the following
property is valid: \bfn\label{8.14}\mathbb{F}_o^*(\cl)\setminus
\Phi_\cl(L) \in \mathbf{T}_\cl^o[E]\  \fo L\in \cl.\efn

\section{The density properties}
\setcounter{equation}{0}

In this section, we continue the investigation of TS (\ref{8.11}).
Of course, we preserve the suppositions of Section 7 with respect to
$E$ and $\cl.$ But, in this section, we postulate that $\{x\}\in
\cl\ \ \fo x\in E.$ So, in this section \bfn\label{9.1}\cl\in
(\mathrm{LAT})^o[E]\efn unless otherwise stipulated. So, $\cl\in
(\mathrm{LAT})_o[E]$ and $\{x\}\in \cl\ \ \fo x\in E.$ Therefore,
with regard (\ref{3.8}) and (\ref{9.1}), we obtain that
\bfn\label{9.2}(E-\mathrm{ult})[x] \cap \cl = \{L\in \cl |\,x\in L\}
\in \mathbb{F}_o^*(\cl)\ \ \fo x\in E.\efn Of course, for any $x\in
E,$ the inclusion $\{x\}\in (E-\mathrm{ult})[x] \cap \cl$ is valid.

\begin{propos}\label{p9.1}{\TL}  $\mathbb{F}_o^*(\cl) =
\mathrm{cl}\bigl(\bigl\{(E-\mathrm{ult})[x] \cap \cl:\,x\in
E\bigl\},\mathbf{T}_\cl^o[E]\bigl).$
\end{propos}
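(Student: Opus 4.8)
The goal is to show that the set of "trivial" lattice ultrafilters $D \df \{(E-\mathrm{ult})[x] \cap \cl : x\in E\}$ is dense in the compactum $(\mathbb{F}_o^*(\cl),\mathbf{T}_\cl^o[E])$. The plan is to use the explicit description of the topology $\mathbf{T}_\cl^o[E]$ and its local bases provided by Proposition~\ref{p8.4}: a nonempty open set must contain, for each of its points $\cu$, a basic neighborhood of the form $\mathbb{F}_o^*(\cl)\setminus \Phi_\cl(L)$ with $L\in \cl\setminus \cu$. So it suffices to check that every nonempty basic open set $\mathbb{F}_o^*(\cl)\setminus \Phi_\cl(L)$ (with $L\in\cl$, $L\neq E$, since otherwise $\Phi_\cl(E) = \mathbb{F}_o^*(\cl)$ and the complement is empty) meets $D$; and more precisely, since local bases at a point $\cu$ involve \emph{finite intersections} of such sets, I would show that any finite intersection $\bigcap_{i=1}^n \bigl(\mathbb{F}_o^*(\cl)\setminus \Phi_\cl(L_i)\bigr)$ that is nonempty already contains some trivial ultrafilter $(E-\mathrm{ult})[x]\cap\cl$. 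Recall from (\ref{9.2}) that each $(E-\mathrm{ult})[x]\cap\cl$ is indeed an element of $\mathbb{F}_o^*(\cl)$, so $D \subset \mathbb{F}_o^*(\cl)$ and the closure makes sense.

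The key computation is this: for $x\in E$ and $L\in\cl$, we have $(E-\mathrm{ult})[x]\cap\cl \in \Phi_\cl(L)$ iff $L\in (E-\mathrm{ult})[x]$, i.e. iff $x\in L$ (using $\{x\}\in\cl$). Hence $(E-\mathrm{ult})[x]\cap\cl \in \mathbb{F}_o^*(\cl)\setminus\Phi_\cl(L_i)$ for all $i\in\ov{1,n}$ precisely when $x\notin L_i$ for every $i$, i.e. when $x\in E\setminus\bigcup_{i=1}^n L_i$. So I must show: if $\bigcap_{i=1}^n\bigl(\mathbb{F}_o^*(\cl)\setminus\Phi_\cl(L_i)\bigr)\neq\emptyset$, then $E\setminus\bigcup_{i=1}^n L_i\neq\emptyset$. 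Equivalently, if $\bigcup_{i=1}^n L_i = E$, then every ultrafilter $\cu\in\mathbb{F}_o^*(\cl)$ lies in some $\Phi_\cl(L_i)$, i.e. $L_i\in\cu$ for some $i$. This is exactly the finite-union form of the basic ultrafilter property: (\ref{5.9}) gives it for $n=2$, and an easy induction on $n$ (using that $\cl$ is closed under finite unions, so $L_1\cup\dots\cup L_{n-1}\in\cl$) extends it to arbitrary $n$. This is the heart of the argument and relies essentially on the lattice hypothesis $\cl\in(\mathrm{LAT})_o[E]$ carried over from Section 7.

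Putting the pieces together: fix $\cu_0\in\mathbb{F}_o^*(\cl)$ and any $H\in N_{\mathbf{T}_\cl^o[E]}(\cu_0)$. By Proposition~\ref{p8.4}, there is $B\in\varphi_{\cu_0}$ with $B\subset H$; and since finite intersections of members of $\varphi_{\cu_0}$ are cofinal in $N_{\mathbf{T}_\cl^o[E]}(\cu_0)$ — or, more simply, just take a single $L\in\cl\setminus\cu_0$ with $\mathbb{F}_o^*(\cl)\setminus\Phi_\cl(L)\subset H$ — we note $L\neq E$ (as $E\in\cu_0$), so $E\setminus L\neq\emptyset$; pick $x\in E\setminus L$. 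Then $(E-\mathrm{ult})[x]\cap\cl\in D$ and, by the computation above, $L\notin (E-\mathrm{ult})[x]\cap\cl$, so $(E-\mathrm{ult})[x]\cap\cl \in \mathbb{F}_o^*(\cl)\setminus\Phi_\cl(L)\subset H$. Thus every neighborhood of every point of $\mathbb{F}_o^*(\cl)$ meets $D$, giving $\mathbb{F}_o^*(\cl)\subset\mathrm{cl}(D,\mathbf{T}_\cl^o[E])$; the reverse inclusion is automatic. The main obstacle is organizational rather than deep: making sure the neighborhood base from Proposition~\ref{p8.4} is used correctly (single basic complements suffice here, since we only need to hit $H$, not describe all of it), and invoking the finite-union ultrafilter property cleanly. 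No genuinely hard step arises beyond the induction extending (\ref{5.9}).
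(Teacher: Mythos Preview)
Your proposal is correct and, in its final ``Putting the pieces together'' paragraph, follows essentially the same route as the paper: pick an arbitrary $\cu\in\mathbb{F}_o^*(\cl)$ and neighborhood $H$, use Proposition~\ref{p8.4} to find a single $L\in\cl\setminus\cu$ with $\mathbb{F}_o^*(\cl)\setminus\Phi_\cl(L)\subset H$, note $L\neq E$, and take any $x\in E\setminus L$ so that the trivial ultrafilter at $x$ lands in that basic set. The earlier discussion of finite intersections and the inductive extension of (\ref{5.9}) is superfluous here---Proposition~\ref{p8.4} already gives a local base of \emph{single} complements $\mathbb{F}_o^*(\cl)\setminus\Phi_\cl(L)$, so no intersection argument is needed---but it does no harm.
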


\begin{proof} Let $\cu\in \mathbb{F}_o^*(\cl)$ and $\mathbb{H}\in
N_{\mathbf{T}_\cl^o[E]}(\cu).$ We use Proposition~\ref{p8.4}.
Namely, we choose a set $\mathbb{L}\in \cl\setminus \cu$ for which
\bfn\label{9.3}\mathbb{F}_o^*(\cl)\setminus
\Phi_\cl(\mathbb{L})\subset \mathbb{H}.\efn Since $E\in \cu$ by
axioms of a filter (see Section 5), we obtain that $\mathbb{L}\neq
E.$ In addition, $\mathbb{L}\in \cl$ and by (\ref{2.4}) and
(\ref{9.1}) $\mathbb{L}\subset E.$ So, $E\setminus \mathbb{L}\neq
\emptyset.$  Choose arbitrary point $e\in E\setminus \mathbb{L}$ and
consider the ultrafilter
\bfn\label{9.4}\mathcal{E}\,\df\,(E-\mathrm{ult})[e] \cap \cl\in
\mathbb{F}_o^*(\cl);\efn see (\ref{9.2}). In addition, $\{e\} \in
\mathcal{E}.$ As a corollary, by definitions of Section 5
\bfn\label{9.5}(\mathbb{L}\in \mathcal{E}) \Longrightarrow
(\mathbb{L} \cap \{e\} \neq \emptyset).\efn But, $\mathbb{L} \cap
\{e\} = \e$ by the choice of $e.$ Therefore, by (\ref{9.5})
$\mathbb{L}\notin \mathcal{E}.$ From (\ref{5.2}) we have the
property $\mathcal{E}\notin \Phi_\cl(\mathbb{L}).$ As a corollary,
by (\ref{9.4}) \bfn\label{9.6}\mathcal{E}\in \mathbb{F}_o^*(\cl)
\setminus \Phi_\cl(\mathbb{L}).\efn From (\ref{9.3}) and
(\ref{9.6}), we obtain that $\mathcal{E}\in \mathbb{H}.$ By
(\ref{9.4}) \bfn\label{9.7}\{(E-\mathrm{ult})[x] \cap \cl:\,x\in E\}
\cap \mathbb{H} \neq \e.\efn Since the choice of $\mathbb{H}$ was
arbitrary, $$\cu\in \mathrm{cl}\bigl(\bigl\{(E-\mathrm{ult})[x] \cap
\cl:\,x\in E\bigl\},\mathbf{T}_\cl^o[E]\bigl).$$  Since the choice
of $\cu$ was arbitrary, the inclusion $$\mathbb{F}_o^*(\cl) \subset
\mathrm{cl}\bigl(\{E-\mathrm{ult})[x] \cap \cl:\,x\in
E\},\,\mathbf{T}_\cl^o[E]\bigl)$$ is established. The inverse
inclusion is obvious (see (\ref{8.10})).
\end{proof}

So, we obtain that trivial ultrafilters (\ref{9.2}) realize an
everywhere dense set in the TS (\ref{8.11}).

Returning to (\ref{8.10}), we note one obvious property connected
with (\ref{8.14}). Namely, by (\ref{2.13}) and
Proposition~\ref{p8.1}, in general case of $\cl\in
(\mathrm{LAT})_o[E]$
$$\mathbf{C}_{\mathbb{F}_o^*(\cl)}\bigl[(\mathbb{UF})[E;\cl]\bigl]\in
(\mathrm{op}-\mathrm{BAS})_\e[\mathbb{F}_o^*(\cl)]$$ and, in
particular,
$\mathbf{C}_{\mathbb{F}_o^*(\cl)}\bigl[(\mathbb{UF})[E;\cl]\bigl]\in
(\mathrm{op}-\mathrm{BAS})[\mathbb{F}_o^*(\cl)];$ then, for $\cl\in
(\mathrm{LAT})_o[E]$
$$\{\cup\}\bigl(\mathbf{C}_{\mathbb{F}_o^*(\cl)}\bigl[(\mathbb{UF})[E;\cl]\bigl]\bigl)\in
(\mathrm{top})[\mathbb{F}_o^*(\cl)].$$ And what is more by
(\ref{2.16}), (\ref{8.10}), and Proposition~\ref{p8.1}, in general
case of $\cl\in \mathrm{LAT})_o[E]$
\bfn\label{9.8}\mathbf{T}_\cl^o[E] =
\mathbf{C}_{\mathbb{F}_o^*(\cl)}\bigl[\{\cap\}\bigl((\mathbb{UF})[E;\cl]\bigl)\bigl]
=
\{\cup\}\bigl(\mathbf{C}_{\mathbb{F}_o^*(\cl)}\bigl[(\mathbb{UF})[E;\cl]\bigl]\bigl);\efn
so, by (\ref{9.8})
$\mathbf{C}_{\mathbb{F}_o^*(\cl)}\bigl[(\mathbb{UF})[E;\cl]\bigl]$
is a base of topology (\ref{8.10}). We recall that by (\ref{2.7})
and (\ref{5.3}), for general case of $\cl\in (\mathrm{LAT})_o[E]$
\begin{equation}\label{9.9}\begin{array}{c}\mathbf{C}_{\mathbb{F}_o^*(\cl)}\bigl[(\mathbb{UF})[E;\cl]\bigl] =
\{\mathbb{F}_o^*(\cl)\setminus B:\,B\in (\mathbb{UF})[E;\cl]\} =\\
= \{\mathbb{F}_o^*(\cl)\setminus \Phi_\cl(L):\,L\in \cl\}.\end{array}\end{equation}

{\bf Connection with Wallman extension}. Let $\tau\in
(\mathcal{D}-\mathrm{top})[E].$ Then, $\mathbf{C}_E[\tau] \in
(\mathrm{clos})[E]$ and by (\ref{2.17}) $\{x\}\in
\mathbf{C}_E[\tau]\ \ \fo x\in E.$ Using (\ref{2.6`}), we obtain
that $\mathbf{C}_E[\tau]\in (\mathrm{LAT})_o[E].$ With the
employment of the above-mentioned closedness of singletons, by the
corresponding definition of Section 2 we obtain that
\bfn\label{9.10}\mathbf{C}_E[\tau] \in(\mathrm{LAT})^o[E].\efn Until
the end of the present section, we suppose that \bfn\label{9.11}\cl
= \mathbf{C}_E[\tau].\efn So, in our case, $(E,\cl)$ is the lattice
of closed sets in $T_1$-space. Then, (\ref{8.11}) is the
corresponding Wallman compact space (see \cite[ch.\,3]{36}). On the
other hand, by (\ref{9.10}) and (\ref{9.11}) we obtain that this
variant of $(E,\cl)$ corresponds to general statements of our
section (for example, see (\ref{9.2}) and Proposition~\ref{p9.1}).
In this connection, we consider the mapping
\bfn\label{9.12}x\longmapsto (E-\mathrm{ult})[x] \cap
\cl:\,E\longrightarrow \mathbb{F}_o^*(\cl);\efn we denote the
mapping (\ref{9.12}) by $\mathbf{f}.$ So, $\mathbf{f}\in
\mathbb{F}_o^*(\cl)^E$ and $$\mathbf{f}(x)\,\df\,(E-\mathrm{ult})[x]
\cap \cl\ \ \fo x\in E.$$ Consider some simple properties. First, we
note that $\mathbf{f}$ is injective: $\fo x_1\in E\ \ \fo x_2 \in E$
\bfn\label{9.13}\bigl(\mathbf{f}(x_1) =
\mathbf{f}(x_2)\bigl)\Longrightarrow (x_1 = x_2).\efn Indeed, for
$x_1\in E$ and $x_2\in E$ with the property $\mathbf{f}(x_1) =
\mathbf{f}(x_2),$ by (\ref{3.8}) we have that $\{x_1\}\in
\mathbf{f}(x_2)$ and, as a corollary, $x_2\in \{x_1\};$ so, $x_1 =
x_2.$

Of course, $\mathbf{f}$ is a bijection from $E$ onto the set
\bfn\label{9.14}\mathbf{f}^1(E) = \{(E-\mathrm{ult})[x] \cap
\cl:\,x\in E\}\in \cp^\prime\bigl(\mathbb{F}_o^*(\cl)\bigl).\efn If
$L\in \cl$ and $x\in E,$ then $\bigl(L\in \mathbf{f}(x)\bigl)
\Leftrightarrow (x\in L).$ As a corollary, we obtain that
\bfn\label{9.15}\mathbf{f}^{-1}\bigl(\Phi_\cl(L)\bigl) = L\ \ \fo
L\in \cl.\efn

{\bf Remark\,8.1}. Of course, in (\ref{9.15}), we use the
representation (\ref{9.12}). Fix $L\in\cl.$ Let $x_*\in
\mathbf{f}^{-1}\bigl(\Phi_\cl(L)\bigl).$ Then $x_*\in E$ and
$\mathbf{f}(x_*)\in \Phi_\cl(L).$ By (\ref{5.2}) $L\in
\mathbf{f}(x_*)$ and,  as a corollary, $x_*\in L.$ So,
\bfn\label{9.16}\mathbf{f}^{-1}\bigl(\Phi_\cl(L)\bigl)\subset L.\efn
If $x^*\in L,$ then $L\in \mathbf{f}(x^*);$ see (\ref{9.12}).
Therefore, by(\ref{5.2}) $\mathbf{f}(x^*) \in \Phi_\cl(L)$ and, as a
corollary, $x^*\in \mathbf{f}^{-1}\bigl(\Phi_\cl(L)\bigl).$ So,
$L\subset \mathbf{f}^{-1}\bigl(\Phi_\cl(L)\bigl).$ Therefore
(see(\ref{9.16}))  $L$ and $\mathbf{f}^{-1}\bigl(\Phi_\cl(L)\bigl)$
coincide.

From (\ref{5.3}) and (\ref{9.15}), we obtain that \bfn\label{9.16`}\mathbf{f}^{-1}(B)\in
\cl\ \ \fo B\in(\mathbb{UF})[E;\cl].\efn

\begin{propos}\label{p9.2}{\TL} $\mathbf{f}\in
C(E,\tau,\mathbb{F}_o^*(\cl),\mathbf{T}_\cl^o[E]).$
\end{propos}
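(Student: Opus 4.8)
The plan is to show that preimages of subbasic open sets of $\mathbf{T}_\cl^o[E]$ under $\mathbf{f}$ lie in $\tau$, which suffices since continuity can be checked on a subbase. By (\ref{9.9}) the family $\mathbf{C}_{\mathbb{F}_o^*(\cl)}\bigl[(\mathbb{UF})[E;\cl]\bigl] = \{\mathbb{F}_o^*(\cl)\setminus \Phi_\cl(L):\,L\in \cl\}$ is a base (in particular a subbase) of $\mathbf{T}_\cl^o[E]$; see (\ref{9.8}). So it is enough to compute $\mathbf{f}^{-1}\bigl(\mathbb{F}_o^*(\cl)\setminus \Phi_\cl(L)\bigr)$ for $L\in \cl$ and verify it is $\tau$-open.

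First I would fix $L\in \cl$. Using the elementary set-theoretic identity $\mathbf{f}^{-1}\bigl(\mathbb{F}_o^*(\cl)\setminus \Phi_\cl(L)\bigr) = E\setminus \mathbf{f}^{-1}\bigl(\Phi_\cl(L)\bigr)$ together with (\ref{9.15}) (equivalently Remark 8.1), we get $\mathbf{f}^{-1}\bigl(\mathbb{F}_o^*(\cl)\setminus \Phi_\cl(L)\bigr) = E\setminus L$. Now recall (\ref{9.11}): $\cl = \mathbf{C}_E[\tau]$, so $L$ is a $\tau$-closed set, whence $E\setminus L \in \tau$ by the duality between closed sets and topologies (see (\ref{2.8})). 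Therefore $\mathbf{f}^{-1}\bigl(\mathbb{F}_o^*(\cl)\setminus \Phi_\cl(L)\bigr)\in \tau$ for every $L\in \cl$.

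Finally I would assemble the conclusion. Since $\mathbf{C}_{\mathbb{F}_o^*(\cl)}\bigl[(\mathbb{UF})[E;\cl]\bigl]$ is a base of $\mathbf{T}_\cl^o[E]$ and every member of it has $\tau$-open preimage under $\mathbf{f}$, an arbitrary $\mathbf{T}_\cl^o[E]$-open set $G$ is a union of such base members, so $\mathbf{f}^{-1}(G)$ is a union of $\tau$-open sets and hence $\mathbf{f}^{-1}(G)\in \tau$. By (\ref{4.20}) this gives $\mathbf{f}\in C(E,\tau,\mathbb{F}_o^*(\cl),\mathbf{T}_\cl^o[E])$, as required.

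There is essentially no obstacle here: the whole argument is a bookkeeping exercise once (\ref{9.15}) is in hand. The only point needing a little care is the passage from ``base members have open preimage'' to ``all open sets have open preimage'', which is the standard fact that preimage commutes with arbitrary unions; and keeping track that (\ref{9.11}) is exactly what makes each $L\in\cl$ a $\tau$-closed set so that its complement is $\tau$-open.
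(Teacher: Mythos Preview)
Your proof is correct and is essentially the dual of the paper's argument: the paper takes an arbitrary closed set $F\in\{\cap\}\bigl((\mathbb{UF})[E;\cl]\bigr)$, writes it as an intersection of sets $\Phi_\cl(L)$, and uses (\ref{9.15}) (packaged as (\ref{9.16`})) to conclude $\mathbf{f}^{-1}(F)$ is an intersection of $\cl$-sets, hence $\tau$-closed; you instead work with the open base $\{\mathbb{F}_o^*(\cl)\setminus\Phi_\cl(L):L\in\cl\}$ and show preimages are $\tau$-open. Both routes rest on (\ref{9.15}) and (\ref{9.11}) in the same way, so there is no substantive difference.
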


\begin{proof} We use the construction dual with respect to (\ref{4.20}). Let $F\in
\mathbf{C}_{\mathbb{F}_o^*(\cl)}\bigl[\mathbf{T}_\cl^o[E]\bigl].$
Then, by (\ref{9.8}) $F\in
\{\cap\}\bigl((\mathbb{UF})[E;\cl]\bigl).$ Therefore, for some
$\cf\in \cp^\prime\bigl((\mathbb{UF})[E;\cl]\bigl)$ $$F =
\bigcap\limits_{B\in \cf}B.$$ As a result, we obtain that
\bfn\label{9.17}\mathbf{f}^{-1}(F) = \bigcap\limits_{B\in
\cf}\mathbf{f}^{-1}(B),\efn where $\mathbf{f}^{-1}(\widetilde{B})\in
\cl\ \ \fo \widetilde{B}\in \cf;$ see (\ref{9.16`}). By (\ref{2.6}),
(\ref{2.8}),  (\ref{9.11}), and (\ref{9.17}) we have the property:
$\la\,\df\,\{\mathbf{f}^{-1}(B):\,B\in \cf\}\in
\cp^\prime(\mathbf{C}_E[\tau])$ and
\bfn\label{9.18}\mathbf{f}^{-1}(F) = \bigcap\limits_{\La\in \la}
\La\in \mathbf{C}_E[\tau].\efn Since the choice of $F$ was
arbitrary, from (\ref{9.18}) we obtain the required continuity
property (see \cite[(2.5.2)]{39}). \end{proof}

\begin{cor}\label{c9.1}{\TL} $\mathbf{f}\in
C\bigl(E,\tau,\mathbf{f}^1(E),\mathbf{T}_\cl^o[E]\bigl|_{\mathbf{f}^1(E)}\bigl).$\end{cor}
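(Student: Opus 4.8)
The plan is to deduce Corollary~\ref{c9.1} directly from Proposition~\ref{p9.2} by the standard fact that continuity is preserved under corestriction to a subspace containing the image. Concretely, we already know from Proposition~\ref{p9.2} that $\mathbf{f}\in C(E,\tau,\mathbb{F}_o^*(\cl),\mathbf{T}_\cl^o[E])$, and we want to replace the target space $(\mathbb{F}_o^*(\cl),\mathbf{T}_\cl^o[E])$ by its subspace on $\mathbf{f}^1(E)$, equipped with the relative topology $\mathbf{T}_\cl^o[E]\bigl|_{\mathbf{f}^1(E)}$. By (\ref{9.14}) we have $\mathbf{f}^1(E)\in\cp^\prime(\mathbb{F}_o^*(\cl))$, so the relative topology and the subspace are legitimately defined.

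First I would recall that, by the definition of the transformation $\mathcal{A}\bigl|_B$ from Section 2, a typical open set of the subspace is $G\cap\mathbf{f}^1(E)$ with $G\in\mathbf{T}_\cl^o[E]$. Then I would compute the preimage: since $\mathbf{f}$ maps $E$ into $\mathbf{f}^1(E)$, for any $G\in\mathbf{T}_\cl^o[E]$ we have $\mathbf{f}^{-1}\bigl(G\cap\mathbf{f}^1(E)\bigr)=\mathbf{f}^{-1}(G)\cap\mathbf{f}^{-1}\bigl(\mathbf{f}^1(E)\bigr)=\mathbf{f}^{-1}(G)\cap E=\mathbf{f}^{-1}(G)$. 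By Proposition~\ref{p9.2}, $\mathbf{f}^{-1}(G)\in\tau$, so the preimage of every relatively open set lies in $\tau$; this is exactly the continuity condition (\ref{4.20}) for $\mathbf{f}\in C\bigl(E,\tau,\mathbf{f}^1(E),\mathbf{T}_\cl^o[E]\bigl|_{\mathbf{f}^1(E)}\bigr)$. To be fully careful I would also check that $\mathbf{T}_\cl^o[E]\bigl|_{\mathbf{f}^1(E)}$ is indeed a topology on $\mathbf{f}^1(E)$, which is immediate since the trace of any topology on a subset is a topology (this uses only that intersections with a fixed set commute with arbitrary unions and finite intersections).

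There is essentially no obstacle here: the only point requiring the slightest attention is the harmless identity $\mathbf{f}^{-1}\bigl(\mathbf{f}^1(E)\bigr)=E$, which holds because $\mathbf{f}\in\mathbb{F}_o^*(\cl)^E$ is a function on all of $E$. Thus the corollary is a one-line consequence of Proposition~\ref{p9.2} once the definitions of the relative topology and of continuity (\ref{4.20}) are unwound. Alternatively, if one prefers a dual-side argument paralleling the proof of Proposition~\ref{p9.2}, one may note that $\mathbf{C}_{\mathbf{f}^1(E)}\bigl[\mathbf{T}_\cl^o[E]\bigl|_{\mathbf{f}^1(E)}\bigr]=\mathbf{C}_{\mathbb{F}_o^*(\cl)}\bigl[\mathbf{T}_\cl^o[E]\bigr]\bigl|_{\mathbf{f}^1(E)}$ and that for a relatively closed set $F\cap\mathbf{f}^1(E)$ one has $\mathbf{f}^{-1}\bigl(F\cap\mathbf{f}^1(E)\bigr)=\mathbf{f}^{-1}(F)\in\mathbf{C}_E[\tau]$ by (\ref{9.18}); either route closes the argument.
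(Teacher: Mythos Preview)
Your proof is correct and follows essentially the same approach as the paper: take a relatively open set $G=\mathbf{f}^1(E)\cap\Gamma$ with $\Gamma\in\mathbf{T}_\cl^o[E]$, show $\mathbf{f}^{-1}(G)=\mathbf{f}^{-1}(\Gamma)$, and invoke Proposition~\ref{p9.2}. The only cosmetic difference is that the paper verifies $\mathbf{f}^{-1}(G)=\mathbf{f}^{-1}(\Gamma)$ by an explicit double-inclusion argument, whereas you use the set-theoretic identity $\mathbf{f}^{-1}(\Gamma\cap\mathbf{f}^1(E))=\mathbf{f}^{-1}(\Gamma)\cap E$.
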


\begin{proof} Recall that $\mathbf{f}(x) \in \mathbf{f}^1(E)\ \ \fo
x\in E.$ In addition, by (\ref{9.14}) $$\mathbf{f}^1(E) =
\{\mathbf{f}(x):\,x\in E\}\subset \mathbb{F}_o^* (\cl).$$ Let $G\in
\mathbf{T}_\cl^o[E]\bigl|_{\mathbf{f}^1(E)}$ and $\Gamma\in
\mathbf{T}_\cl^o[E]$ realizes the equality $G = \mathbf{f}^1(E) \cap
\Gamma.$ By Proposition~\ref{p9.2}
\bfn\label{9.19}\mathbf{f}^{-1}(\Gamma)\in \tau.\efn In addition,
$\mathbf{f}^{-1}(G) \subset \mathbf{f}^{-1}(\Gamma)$ (indeed,
$G\subset \Gamma).$ Let $x_*\in \mathbf{f}^{-1}(\Gamma).$ Then,
$x_*\in E$ and $\mathbf{f}(x_*)\in \Gamma.$ But, $\mathbf{f}(x_*)
\in \mathbf{f}^1(E)$ too. Then, $\mathbf{f}(x_*)\in \mathbf{f}^1(E)
\cap \Gamma.$ So, $\mathbf{f}(x_*)\in G.$ Therefore, $x_*\in
\mathbf{f}^{-1}(G).$ Since the choice of $x_*$ was arbitrary, the
inclusion
$$\mathbf{f}^{-1}(\Gamma)\subset \mathbf{f}^{-1}(G)$$ is established. So,
$\mathbf{f}^{-1}(G) = \mathbf{f}^{-1}(\Gamma).$ By (\ref{9.19})
$\mathbf{f}^{-1}(G) \in \tau.$ Since the choice of $G$ was
arbitrary, the inclusion $\mathbf{f}\in
C\bigl(E,\tau,\mathbf{f}^1(E),\mathbf{T}_\cl^o[E]\bigl|_{\mathbf{f}^1(E)}\bigl)$
is established.\end{proof}

Recall that $\mathbf{f}\in (\mathrm{bi})[E;\mathbf{f}^1(E)]$ (see
(\ref{4.21})).

\begin{propos}\label{p9.3}{\TL} $\mathbf{f}\in
C_\mathrm{op}\bigl(E,\tau,\mathbf{f}^1(E),\mathbf{T}_\cl^o[E]\bigl|_{\mathbf{f}^1(E)}\bigl).$
\end{propos}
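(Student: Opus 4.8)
The plan is to check directly that $\mathbf{f}$ carries $\tau$-open sets onto members of the subspace topology $\mathbf{T}_\cl^o[E]\bigl|_{\mathbf{f}^1(E)}$. Recall that $\mathbf{f}\in (\mathrm{bi})[E;\mathbf{f}^1(E)]$ is a bijection, that $\mathbf{f}\in C\bigl(E,\tau,\mathbf{f}^1(E),\mathbf{T}_\cl^o[E]\bigl|_{\mathbf{f}^1(E)}\bigr)$ by Corollary~\ref{c9.1}, and that, since $\cl = \mathbf{C}_E[\tau]$ (see (\ref{9.11})), a subset $G$ of $E$ lies in $\tau$ if and only if $E\setminus G\in \cl$.

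The computational core is the identity $\mathbf{f}^1(L) = \mathbf{f}^1(E)\cap \Phi_\cl(L)$ for every $L\in \cl$. Every element of $\mathbf{f}^1(E)$ has the form $\mathbf{f}(x)$ with $x\in E$, and by (\ref{3.8}) and (\ref{9.12}) one has $L\in \mathbf{f}(x)\Leftrightarrow x\in L$; hence $\mathbf{f}^1(E)\cap \Phi_\cl(L) = \{\mathbf{f}(x):\ x\in E,\ L\in \mathbf{f}(x)\} = \{\mathbf{f}(x):\ x\in L\} = \mathbf{f}^1(L)$.

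Now fix $G\in \tau$ and put $L\,\df\,E\setminus G$, so $L\in \cl$. Injectivity of $\mathbf{f}$ gives $\mathbf{f}^1(E\setminus L) = \mathbf{f}^1(E)\setminus \mathbf{f}^1(L)$, and combining this with the identity above yields
\[
\mathbf{f}^1(G) = \mathbf{f}^1(E)\setminus \bigl(\mathbf{f}^1(E)\cap \Phi_\cl(L)\bigr) = \mathbf{f}^1(E)\cap \bigl(\mathbb{F}_o^*(\cl)\setminus \Phi_\cl(L)\bigr).
\]
By (\ref{8.14}), $\mathbb{F}_o^*(\cl)\setminus \Phi_\cl(L)\in \mathbf{T}_\cl^o[E]$, hence $\mathbf{f}^1(G)\in \mathbf{T}_\cl^o[E]\bigl|_{\mathbf{f}^1(E)}$. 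Since $G$ was an arbitrary element of $\tau$, this together with Corollary~\ref{c9.1} establishes $\mathbf{f}\in C_\mathrm{op}\bigl(E,\tau,\mathbf{f}^1(E),\mathbf{T}_\cl^o[E]\bigl|_{\mathbf{f}^1(E)}\bigr)$ in view of (\ref{4.22}).

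I expect no serious obstacle here; the only point needing care is the bookkeeping identity $\mathbf{f}^1(E)\cap \Phi_\cl(L) = \mathbf{f}^1(L)$ and the observation that, restricted to $\mathbf{f}^1(E)$, the basic open set $\mathbb{F}_o^*(\cl)\setminus \Phi_\cl(L)$ is exactly the image of the $\tau$-open set $E\setminus L$. Equivalently, one could argue that $\mathbf{f}$ is a closed map onto its image: for $F\in \cl$ the set $\mathbf{f}^1(F) = \mathbf{f}^1(E)\cap \Phi_\cl(F)$ is closed in the subspace because $\Phi_\cl(F)\in (\mathbb{UF})[E;\cl]\subset \{\cap\}\bigl((\mathbb{UF})[E;\cl]\bigr) = \mathbf{C}_{\mathbb{F}_o^*(\cl)}\bigl[\mathbf{T}_\cl^o[E]\bigr]$ (using (\ref{8.10}) and (\ref{2.8})), and one then invokes injectivity of $\mathbf{f}$; but the open-set computation above is the most direct route.
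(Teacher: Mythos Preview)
Your proof is correct and follows essentially the same route as the paper: for $G\in\tau$ with $L=E\setminus G\in\cl$, both arguments establish $\mathbf{f}^1(G)=\mathbf{f}^1(E)\cap\bigl(\mathbb{F}_o^*(\cl)\setminus\Phi_\cl(L)\bigr)$ via the equivalence $L\in\mathbf{f}(x)\Leftrightarrow x\in L$, and then invoke that $\mathbb{F}_o^*(\cl)\setminus\Phi_\cl(L)$ is open. The only organizational difference is that you first isolate the identity $\mathbf{f}^1(L)=\mathbf{f}^1(E)\cap\Phi_\cl(L)$ and pass to complements using injectivity, whereas the paper verifies the two inclusions for $\mathbf{f}^1(G)$ directly; this is a cosmetic rearrangement, not a different approach.
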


\begin{proof} Let $G\in \tau.$ Then $\mathbf{f}^1(G) =
\{\mathbf{f}(x):\,x\in G\}$ and $F\,\df\,E\setminus G\in
\mathbf{C}_E[\tau].$ By (\ref{9.11}) $F\in \cl.$ In addition, by
(\ref{5.2}) \bfn\label{9.20}\Phi_\cl(F) = \{\cu\in
\mathbb{F}_o^*(\cl)\,|\,F\in \cu\}.\efn Of course, by (\ref{5.3})
$\Phi_\cl(F) \in (\mathbb{UF})[E;\cl].$ Then
$$\mathbb{F}_o^*(\cl)\setminus \Phi_\cl(F) \in
\mathbf{C}_{\mathbb{F}_o^*(\cl)}\bigl[(\mathbb{UF})[E;\cl]\bigl].$$ As a corollary,
$\mathbb{F}_o^*(\cl)\setminus \Phi_\cl(F) \in \mathbf{T}_\cl^o[E].$ Therefore,
\bfn\label{9.20}\mathbb{G}\,\df\,\mathbf{f}^1(E) \cap \bigl(\mathbb{F}_o^*(\cl)\setminus
\Phi_\cl(F)\bigl)\in \mathbf{T}_\cl^o[E]\bigl|_{\mathbf{f}^1(E)}.\efn Now, we compare
$\mathbf{f}^1(G)$ and $\mathbb{G}$ (\ref{9.20}). Let $\mathcal{V}\in \mathbf{f}^1(G).$
Then, for some $x_*\in G,$ \bfn\label{9.21}\mathcal{V}= \mathbf{f}(x_*) =
(E-\mathrm{ult})[x_*] \cap \cl.\efn Of course, $G\in (E-\mathrm{ult})[x_*].$ By
(\ref{3.8}) $F\notin (E-\mathrm{ult})[x_*] $ (indeed, $G \cap F = \e\notin
(E-\mathrm{ult})[x_*] ).$ By (\ref{9.21}) $F\notin \mathcal{V}$ and, as a corollary,
$\mathcal{V}\notin \Phi_\cl(F);$ see (\ref{9.20}). We obtain that
\bfn\label{9.22}\mathcal{V}\in \mathbb{F}_o^*(\cl) \setminus \Phi_\cl(F).\efn Since
$\mathbf{f}^1(G) \subset \mathbf{f}^1(E),$ we have the inclusion $\mathcal{V}\in
\mathbf{f}^1(E).$ Using (\ref{9.20}) and (\ref{9.22}), we obtain that $\mathcal{V}\in
\mathbb{G}.$ The inclusion \bfn\label{9.23}\mathbf{f}^1(G) \subset \mathbb{G}\efn is
established. Choose arbitrary $\mathcal{W}\in \mathbb{G}.$ Then, by (\ref{9.20}), for
some $x^*\in E,$ the equality $\mathcal{W} = \mathbf{f}(x^*)$ is valid. So,
\bfn\label{9.24}\mathcal{W} = (E-\mathrm{ult})[x^*] \cap \cl.\efn Moreover, $\mathcal{W}
\in \mathbb{F}_o^*(\cl)\setminus \Phi_\cl(F).$ So, $\mathcal{W}\notin \Phi_\cl(F).$ By
(\ref{9.20}) $F\notin \mathcal{W}.$ Since $F\in \cl,$ by (\ref{9.24}) $F \notin
(E-\mathrm{ult})[x^*].$ From (\ref{3.8}), the property $x^* \notin F$ follows. Then,
$x^*\in E\setminus F.$ Therefore, $x^*\in G.$ As a corollary, $\mathcal{W} =
\mathbf{f}(x^*)\in \mathbf{f}^1(G).$ The inclusion $\mathbb{G}\subset \mathbf{f}^1(G)$ is
established. Using (\ref{9.23}), we obtain that $\mathbf{f}^1(G) = \mathbb{G}.$ By
(\ref{9.20}) $$\mathbf{f}^1(G) \in \mathbf{T}_\cl^o[E]\bigl|_{\mathbf{f}^1(E)}.$$ Since
the choice of $G$ was arbitrary, by Corollary~\ref{c9.1} and (\ref{4.22}) we have the
inclusion $$\hspace{3.5cm}\mathbf{f}\in
C_\mathrm{op}\bigl(E,\tau,\mathbf{f}^1(E),\mathbf{T}_\cl^o[E]\bigl|_{\mathbf{f}^1(E)}\bigl).
\hspace{3.8cm}\Box$$
\smallskip

By (\ref{4.24}), (\ref{9.13}), and Proposition~\ref{p9.3} we obtain
that \bfn\label{9.25}\mathbf{f}\in
(\mathrm{Hom})\bigl[E;\tau;\mathbf{f}^1(E);\mathbf{T}_\cl^o[E]\bigl|_{\mathbf{f}^1(E)}\bigl].\efn
So, we construct the concrete homeomorphic inclusion of $T_1$-space
in the compact $T_1$-space (in this connection, we recall that by
Proposition~\ref{p9.1} $$\mathbb{F}_o^*(\cl) =
\mathrm{cl}\bigl(\mathbf{f}^1(E),\mathbf{T}_\cl^o[E]\bigl);$$
moreover, see (\ref{8.12})). So, we have the ``usual'' Wallman
extension.

\section{Ultrafilters of measurable space}
\setcounter{equation}{0}

In this Section, we fix a nonempty set $\mathbf{I}$ and an algebra
$\mathcal{A}$ of subsets of $\mathbf{I}.$ So, in this section,
$(\mathbf{I},\mathcal{A})$ is a measurable space with an algebra of
sets: $\mathcal{A}\in (\mathrm{alg})[\mathbf{I}].$ Of course, we can
to use constructions of Section 5; indeed, in particular, we have
the inclusion $\ca\in (\mathrm{LAT}_o)[\mathbf{I}];$ see
(\ref{2.9}). As a corollary, by (\ref{2.4}) $\ca \in
\pi[\mathbf{I}].$ So, we use the sets $\mathbb{F}^*(\ca)$ and
$\mathbb{F}_o^*(\ca)$ of Section 5; we use properties of these sets
also. We note the known representation  (see \cite[ch. I]{38}):
\bfn\label{10.1}\mathbb{F}_o^*(\ca) = \{\cf\in
\mathbb{F}^*(\ca)\,|\,\fo A\in \ca\ \ (A\in
\cf)\,\vee\,(\mathbf{I}\setminus A\in \cf)\}.\efn Now, we use
(\ref{10.1}) for investigation of TS (\ref{8.11}) in the case $\cl =
\ca.$ First, we note the obvious corollary of (\ref{10.1}):
\bfn\label{10.2}\mathbb{F}_o^*(\ca)\setminus \Phi_\ca(A) =
\Phi_\ca(\mathbf{I}\setminus A)\ \ \fo A\in \ca.\efn

{\bf Remark\,9.1}. Let $A\in \ca$ is fixed. Choose arbitrary
$\cu_1\in \mathbb{F}_o^*(\ca)\setminus \Phi_\ca(A).$ Then by
(\ref{8.12`}) $A\notin \cu_1.$ By (\ref{10.1}) $\mathbf{I}\setminus
A\in \cu_1,$ where $\mathbf{I}\setminus A\in \ca $ by axioms of an
algebra of sets. So, by (\ref{5.2}) $\cu_1\in
\Phi_\ca(\mathbf{I}\setminus A).$ The inclusion
\bfn\label{10.3}\mathbb{F}_o^*(\ca)\setminus \Phi_\ca(A) \subset
\Phi_\ca(\mathbf{I}\setminus A)\efn is established. Let $\cu_2\in
\Phi_\ca(\mathbf{I}\setminus A).$ Then, by (\ref{5.2}) $\cu_2\in
\mathbb{F}_o^*(\ca)$ and $\mathbf{I}\setminus A\in \cu_2.$ By axioms
of a filter $$(A\in \cu_2) \Longrightarrow \bigl(A \cap
(\mathbf{I}\setminus A) \neq \e\bigl).$$ So, $A\notin \cu_2$ and
$\cu_2\notin \Phi_\ca(A).$ As a corollary, $\cu_2\in
\mathbb{F}_o^*(\ca)\setminus \Phi_\ca(A).$ So, the inclusion
$$\Phi_\ca(\mathbf{I}\setminus A) \subset
\mathbb{F}_o^*(\ca)\setminus \Phi_\ca(A)$$ is established. Using
(\ref{10.3}), we obtain the required coincidence
$\mathbb{F}_o^*(\ca)\setminus \Phi_\ca(A)$ and
$\Phi_\ca(\mathbf{I}\setminus A).$ \end{proof}

Returning to (\ref{10.2}) in general case, we note the following
obvious

\begin{propos}\label{p10.1}{\TL} $(\mathbb{UF})[\mathbf{I};\ca] =
\mathbf{C}_{\mathbb{F}_o^*(\ca)}\bigl[(\mathbb{UF})[\mathbf{I};\ca]\bigl].$
\end{propos}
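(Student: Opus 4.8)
The statement to prove is that the family $(\mathbb{UF})[\mathbf{I};\ca] = \{\Phi_\ca(A):\,A\in\ca\}$ is closed under complementation in $\mathbb{F}_o^*(\ca)$, i.e. $\mathbf{C}_{\mathbb{F}_o^*(\ca)}\bigl[(\mathbb{UF})[\mathbf{I};\ca]\bigl] = (\mathbb{UF})[\mathbf{I};\ca]$. The plan is to prove this by double inclusion, though in fact a single computation establishes both inclusions simultaneously once the idempotency of $\mathbf{C}_{\mathbb{F}_o^*(\ca)}$ (see (\ref{2.8})) is invoked. First I would unfold the definitions: by (\ref{2.7}), $\mathbf{C}_{\mathbb{F}_o^*(\ca)}\bigl[(\mathbb{UF})[\mathbf{I};\ca]\bigl] = \{\mathbb{F}_o^*(\ca)\setminus \Phi_\ca(A):\,A\in\ca\}$, applying the map to each member of the family. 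The key fact is (\ref{10.2}), which asserts $\mathbb{F}_o^*(\ca)\setminus\Phi_\ca(A) = \Phi_\ca(\mathbf{I}\setminus A)$ for every $A\in\ca$; this is exactly the place where the algebra hypothesis $\ca\in(\mathrm{alg})[\mathbf{I}]$ (and the ultrafilter characterization (\ref{10.1})) is used, and it is the substantive content behind the proposition.

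The forward inclusion then runs as follows: take any member of $\mathbf{C}_{\mathbb{F}_o^*(\ca)}\bigl[(\mathbb{UF})[\mathbf{I};\ca]\bigl]$; it has the form $\mathbb{F}_o^*(\ca)\setminus\Phi_\ca(A)$ for some $A\in\ca$. By (\ref{10.2}) this equals $\Phi_\ca(\mathbf{I}\setminus A)$, and since $\ca$ is an algebra, $\mathbf{I}\setminus A\in\ca$, so by (\ref{5.3}) this set lies in $(\mathbb{UF})[\mathbf{I};\ca]$. Hence $\mathbf{C}_{\mathbb{F}_o^*(\ca)}\bigl[(\mathbb{UF})[\mathbf{I};\ca]\bigl]\subset (\mathbb{UF})[\mathbf{I};\ca]$. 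For the reverse inclusion, take $\Phi_\ca(A)\in(\mathbb{UF})[\mathbf{I};\ca]$ with $A\in\ca$. Then $\mathbf{I}\setminus A\in\ca$, and applying (\ref{10.2}) with $\mathbf{I}\setminus A$ in place of $A$ gives $\Phi_\ca(A) = \Phi_\ca\bigl(\mathbf{I}\setminus(\mathbf{I}\setminus A)\bigr) = \mathbb{F}_o^*(\ca)\setminus\Phi_\ca(\mathbf{I}\setminus A)$, which exhibits $\Phi_\ca(A)$ as the complement in $\mathbb{F}_o^*(\ca)$ of the member $\Phi_\ca(\mathbf{I}\setminus A)$ of $(\mathbb{UF})[\mathbf{I};\ca]$; therefore $\Phi_\ca(A)\in\mathbf{C}_{\mathbb{F}_o^*(\ca)}\bigl[(\mathbb{UF})[\mathbf{I};\ca]\bigl]$. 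Combining the two inclusions yields the claimed equality.

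There is essentially no obstacle here: the proposition is a direct corollary of (\ref{10.2}), which has already been established (in Remark 9.1), together with the elementary closure of an algebra under complements and the involutivity $\mathbf{C}_I\circ\mathbf{C}_I = \mathrm{id}$ from (\ref{2.8}). The only thing to be mildly careful about is the direction of the argument for the reverse inclusion — one must apply (\ref{10.2}) to $\mathbf{I}\setminus A$ rather than to $A$, using $\mathbf{I}\setminus(\mathbf{I}\setminus A) = A$ — but this is routine. Alternatively, one could phrase the whole thing in one line: $\mathbf{C}_{\mathbb{F}_o^*(\ca)}\bigl[(\mathbb{UF})[\mathbf{I};\ca]\bigl] = \{\Phi_\ca(\mathbf{I}\setminus A):\,A\in\ca\} = \{\Phi_\ca(B):\,B\in\ca\} = (\mathbb{UF})[\mathbf{I};\ca]$, where the first equality is (\ref{10.2}), the second is the substitution $B = \mathbf{I}\setminus A$ together with the fact that $A\mapsto\mathbf{I}\setminus A$ is a bijection of $\ca$ onto itself, and the third is (\ref{5.3}).
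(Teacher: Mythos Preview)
Your proof is correct and follows essentially the same approach as the paper: both arguments establish the two inclusions separately, each time invoking (\ref{10.2}) to rewrite $\mathbb{F}_o^*(\ca)\setminus\Phi_\ca(A)$ as $\Phi_\ca(\mathbf{I}\setminus A)$ and using that $\ca$ is closed under complements. The only cosmetic difference is the order in which the two inclusions are treated; your one-line alternative via the bijection $A\mapsto\mathbf{I}\setminus A$ is a nice compression of the same idea.
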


\begin{proof} Let $B_o\in (\mathbb{UF})[\mathbf{I};\ca].$ Using
(\ref{5.3}), we choose $L_o\in \ca$ such that $B_o = \Phi_\ca(L_o).$
Then $\mathbf{I}\setminus L_o\in \ca$ and by (\ref{10.2})
\bfn\label{10.4}\mathbb{F}_o^*(\ca)\setminus B_o =
\mathbb{F}_o^*(\ca)\setminus \Phi_\ca(L_o) =
\Phi_\ca(\mathbf{I}\setminus L_o).\efn  From (\ref{5.3}), we have
the obvious inclusion $\Phi_\ca(\mathbf{I}\setminus L_o)\in
(\mathbb{UF})[\mathbf{I};\ca].$ By (\ref{10.4})
$$\mathbb{F}_o^*(\ca)\setminus B_o\in (\mathbb{UF})[\mathbf{I};\ca].$$
 Therefore, we obtain the following property:
$$B_o = \mathbb{F}_o^*(\ca)\setminus (\mathbb{F}_o^*(\ca)\setminus
B_o) = \mathbb{F}_o^*(\ca)\setminus \Phi_\ca(\mathbf{I}\setminus
L_o)\in
\mathbf{C}_{\mathbb{F}_o^*(\ca)}\bigl[(\mathbb{UF})[\mathbf{I};\ca]\bigl].$$
The inclusion $(\mathbb{UF})[\mathbf{I};\ca]\subset
\mathbf{C}_{\mathbb{F}_o^*(\ca)}\bigl[(\mathbb{UF})[\mathbf{I};\ca]\bigl]$
is established. Choose arbitrary \bfn\label{10.5}\La\in
\mathbf{C}_{\mathbb{F}_o^*(\ca)}\bigl[(\mathbb{UF})[\mathbf{I};\ca]\bigl].\efn
Using (\ref{2.7}), we choose $B^o\in (\mathbb{UF})[\mathbf{I};\ca]$
such that $\La = \mathbb{F}_o^*(\ca)\setminus B^o.$ Let $L^o\in \ca$
be the set for which $B^o = \Phi_\ca(L^o);$ see (\ref{5.3}). Then,
by (\ref{10.2}) \bfn\label{10.6}\La = \mathbb{F}_o^*(\ca)\setminus
\Phi_\ca(L^o) = \Phi_\ca(\mathbf{I}\setminus L^o),\efn  where
$\mathbf{I}\setminus L^o\in \ca.$ Since by (\ref{5.3})
$\Phi_\ca(\mathbb{I}\setminus L^o)\in
(\mathbb{UF})[\mathbf{I};\ca],$ from (\ref{10.6}), we obtain that
$$\La\in (\mathbb{UF})[\mathbf{I};\ca].$$ Since the choice of $\La$
(\ref{10.5}) was arbitrary, the inclusion
$$\mathbf{C}_{\mathbb{F}_o^*(\ca)}\bigl[(\mathbb{UF})[\mathbf{I};\ca]\bigl]
\subset (\mathbb{UF})[\mathbf{I};\ca]$$ is established. So, we
obtain the required equality.\end{proof}

From (\ref{7.3}), (\ref{9.8}), and Proposition~\ref{p10.2}, the
simple (but useful) statement follows.

\begin{propos}\label{p10.2}{\TL} $\mathbf{T}_\ca^*[\mathbf{I}] =
\mathbf{T}_\ca^o[\mathbf{I}].$
\end{propos}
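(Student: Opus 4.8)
The plan is to show that the two topologies $\mathbf{T}_\ca^*[\mathbf{I}]$ and $\mathbf{T}_\ca^o[\mathbf{I}]$ on $\mathbb{F}_o^*(\ca)$ coincide by comparing the bases that generate them. Recall from (\ref{7.3}) that $\mathbf{T}_\ca^*[\mathbf{I}] = \{\cup\}\bigl((\mathbb{UF})[\mathbf{I};\ca]\bigr)$, i.e.\ $(\mathbb{UF})[\mathbf{I};\ca]$ is an open base (in fact a base consisting of open sets, since $\pi[\mathbb{F}_o^*(\ca)]\subset (\mathrm{op}-\mathrm{BAS})[\mathbb{F}_o^*(\ca)]$), while from (\ref{9.8}) we have $\mathbf{T}_\ca^o[\mathbf{I}] = \{\cup\}\bigl(\mathbf{C}_{\mathbb{F}_o^*(\ca)}\bigl[(\mathbb{UF})[\mathbf{I};\ca]\bigr]\bigr)$, i.e.\ $\mathbf{C}_{\mathbb{F}_o^*(\ca)}\bigl[(\mathbb{UF})[\mathbf{I};\ca]\bigr]$ is an open base for $\mathbf{T}_\ca^o[\mathbf{I}]$. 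By Proposition~\ref{p10.1}, $(\mathbb{UF})[\mathbf{I};\ca] = \mathbf{C}_{\mathbb{F}_o^*(\ca)}\bigl[(\mathbb{UF})[\mathbf{I};\ca]\bigr]$, so the two topologies are generated (via the $\{\cup\}$ operation) by literally the same family of subsets of $\mathbb{F}_o^*(\ca)$.

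**Next I would** conclude that, since $\{\cup\}$ is a well-defined operation on families and its output depends only on the family itself, applying it to the single family $(\mathbb{UF})[\mathbf{I};\ca]$ yields a single topology; hence $\mathbf{T}_\ca^*[\mathbf{I}] = \{\cup\}\bigl((\mathbb{UF})[\mathbf{I};\ca]\bigr) = \{\cup\}\bigl(\mathbf{C}_{\mathbb{F}_o^*(\ca)}\bigl[(\mathbb{UF})[\mathbf{I};\ca]\bigr]\bigr) = \mathbf{T}_\ca^o[\mathbf{I}]$. Care must be taken with one point raised implicitly in the setup: formula (\ref{9.8}) is stated for the general lattice case $\cl\in (\mathrm{LAT})_o[E]$, and since $\ca\in(\mathrm{alg})[\mathbf{I}]\subset(\mathrm{LAT})_o[\mathbf{I}]$ by (\ref{2.9}), formula (\ref{9.8}) applies with $\cl=\ca$; likewise (\ref{7.3}) is valid for any $\cl\in\pi[\mathbf{I}]$ and $\ca\in\pi[\mathbf{I}]$ by (\ref{2.4}). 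So both base representations are legitimately available, and the only substantive ingredient is Proposition~\ref{p10.1}, which rests on the fundamental algebra property (\ref{10.1})--(\ref{10.2}) that complements of base sets $\Phi_\ca(A)$ are again base sets $\Phi_\ca(\mathbf{I}\setminus A)$.

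**The main (and essentially only) obstacle** is purely bookkeeping: making sure the chain (\ref{7.3}) $\to$ Proposition~\ref{p10.1} $\to$ (\ref{9.8}) is applied with the correct hypotheses, since (\ref{9.8}) was proven under the lattice hypothesis and we must invoke $(\mathrm{alg})[\mathbf{I}]\subset(\mathrm{LAT})_o[\mathbf{I}]$ to enter that regime. There is no genuine topological difficulty here: the equality of the two topologies is immediate once one observes that they are the $\{\cup\}$-closures of the very same base. I would write this as a two-line proof.

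\begin{proof}
Since $\ca\in (\mathrm{alg})[\mathbf{I}]$, we have $\ca\in (\mathrm{LAT})_o[\mathbf{I}]\subset \pi[\mathbf{I}]$; see (\ref{2.9}) and (\ref{2.4}). Therefore both (\ref{7.3}) and (\ref{9.8}) are applicable with $\cl = \ca,$ giving
$$\mathbf{T}_\ca^*[\mathbf{I}] = \{\cup\}\bigl((\mathbb{UF})[\mathbf{I};\ca]\bigr)\quad\text{and}\quad \mathbf{T}_\ca^o[\mathbf{I}] = \{\cup\}\bigl(\mathbf{C}_{\mathbb{F}_o^*(\ca)}\bigl[(\mathbb{UF})[\mathbf{I};\ca]\bigr]\bigr).$$
By Proposition~\ref{p10.1}, $(\mathbb{UF})[\mathbf{I};\ca] = \mathbf{C}_{\mathbb{F}_o^*(\ca)}\bigl[(\mathbb{UF})[\mathbf{I};\ca]\bigr].$ Substituting this equality into the second display and comparing with the first, we obtain $\mathbf{T}_\ca^*[\mathbf{I}] = \mathbf{T}_\ca^o[\mathbf{I}].$
\end{proof}
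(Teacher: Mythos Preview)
Your proof is correct and follows exactly the approach indicated in the paper, which simply notes that the statement follows from (\ref{7.3}), (\ref{9.8}), and Proposition~\ref{p10.1}. You have merely spelled out the one-line deduction in full detail, including the verification that $\ca\in(\mathrm{LAT})_o[\mathbf{I}]$ so that (\ref{9.8}) applies.
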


So, for measurable spaces with algebras of sets, the topological
representations of Sections 6 and 7, 8 realize  the same topology.
By (\ref{7.4}), (\ref{8.12}), and Proposition~\ref{p10.2}
\bfn\label{10.7}\mathbf{T}_\ca^*[\mathbf{I}] \in
(c-\mathrm{top})_o[\mathbb{F}_o^*(\ca)]. \efn So, we obtain a
nonempty compactum. Recall that (see (\ref{8.10}),
Proposition~\ref{p10.2})
\bfn\label{10.8}\{\cap\}\bigl((\mathbb{UF})[\mathbf{I};\ca]\bigl) =
\mathbf{C}_{\mathbb{F}_o^*(\ca)}\bigl[\mathbf{T}_\ca^*[\mathbf{I}]\bigl]\efn
is the family of all sets closed in the sense of topology
(\ref{10.7}).  We note the following obvious  property (see
\cite[ch.\,I]{38}) \bfn\label{10.100}(\mathbb{UF})[\mathbf{I};\ca]
\in (\mathrm{alg})[\mathbb{F}_o^*(\ca)].\efn

{\bf Remark\,9.2}. We recall (\ref{5.3}). Let $\Gamma\in
(\mathbb{UF})[\mathbf{I};\ca].$ Using (\ref{5.3}), we choose $\La\in
\ca$ such that $\Gamma = \Phi_\ca(\La).$ Then, $\mathbf{I}\setminus
\La\in \ca$ and by (\ref{10.2})
\bfn\label{10.101}\mathbb{F}_o^*(\ca)\setminus \Gamma =
\mathbb{F}_o^*(\ca)\setminus \Phi_\ca(\La) =
\Phi_\ca(\mathbf{I}\setminus \La).\efn By (\ref{5.3}) and
(\ref{10.101}) $\mathbb{F}_o^*(\ca)\setminus \Gamma\in
(\mathbb{UF})[\mathbf{I};\ca].$ So, we establish that
\bfn\label{10.102}\mathbb{F}_o^*(\ca)\setminus H\in
(\mathbb{UF})[\mathbf{I};\ca]\ \ \fo H\in
(\mathbb{UF})[\mathbf{I};\ca].\efn From (\ref{2.9}), (\ref{5.3}),
and (\ref{10.102}), the property (\ref{10.100}) follows.

\begin{propos}\label{p10.3}{\TL} $(\mathbb{UF})[\mathbf{I};\ca] =
\mathbf{T}_\ca^*[\mathbf{I}] \cap
\mathbf{C}_{\mathbb{F}_o^*(\ca)}[\mathbf{T}_\ca^*\bigl[\mathbf{I]\bigl]}.$
\end{propos}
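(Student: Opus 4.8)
The plan is to show the claimed equality by establishing the two inclusions, using the fact (Proposition~\ref{p10.1}) that the family $(\mathbb{UF})[\mathbf{I};\ca]$ is already closed under complementation in $\mathbb{F}_o^*(\ca)$, together with the description (\ref{10.8}) of the closed sets of the topology $\mathbf{T}_\ca^*[\mathbf{I}]$. First I would recall that by (\ref{7.2}) (with $\cl=\ca$) the family $(\mathbb{UF})[\mathbf{I};\ca]$ is an open base, so $(\mathbb{UF})[\mathbf{I};\ca]\subset \mathbf{T}_\ca^*[\mathbf{I}]$ — every $\Phi_\ca(L)$ is open. On the other hand, by Proposition~\ref{p10.1} each $\Phi_\ca(L)=(\mathbb{F}_o^*(\ca)\setminus \Phi_\ca(\mathbf{I}\setminus L))$, i.e. $\Phi_\ca(L)$ is the complement of an element of $(\mathbb{UF})[\mathbf{I};\ca]\subset\{\cap\}\bigl((\mathbb{UF})[\mathbf{I};\ca]\bigl)$; hence by (\ref{10.8}) $\Phi_\ca(L)$ is also closed. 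This gives the inclusion $(\mathbb{UF})[\mathbf{I};\ca]\subset \mathbf{T}_\ca^*[\mathbf{I}] \cap \mathbf{C}_{\mathbb{F}_o^*(\ca)}[\mathbf{T}_\ca^*[\mathbf{I}]]$ — every element of the base is clopen.

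For the reverse inclusion, I would take an arbitrary $\mathbb{G}$ that is both open and closed in the TS (\ref{8.11}) (with $\cl=\ca$), i.e. $\mathbb{G}\in \mathbf{T}_\ca^*[\mathbf{I}]$ and $\mathbb{F}_o^*(\ca)\setminus\mathbb{G}\in\mathbf{T}_\ca^*[\mathbf{I}]$, and show $\mathbb{G}=\Phi_\ca(L)$ for some $L\in\ca$. The key point is compactness: by (\ref{10.7}) the space is compact, and $\mathbb{G}$ closed in a compact space is compact. Since $\mathbb{G}$ is open and $(\mathbb{UF})[\mathbf{I};\ca]$ is a base (by (\ref{9.8})/(\ref{9.9}) together with Proposition~\ref{p10.1}, the base $\mathbf{C}_{\mathbb{F}_o^*(\ca)}[(\mathbb{UF})[\mathbf{I};\ca]]$ equals $(\mathbb{UF})[\mathbf{I};\ca]$), we can cover $\mathbb{G}$ by basic open sets $\Phi_\ca(L)\subset\mathbb{G}$; by compactness finitely many suffice, $\mathbb{G}=\bigcup_{i=1}^n\Phi_\ca(L_i)$. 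Then by (\ref{5.11}) (the $\Phi_\ca$-image of a finite union is the union of images, since $\ca\in(\mathrm{LAT})_o[\mathbf{I}]$), $\mathbb{G}=\Phi_\ca\bigl(\bigcup_{i=1}^nL_i\bigr)$ with $\bigcup_{i=1}^nL_i\in\ca$, so $\mathbb{G}\in(\mathbb{UF})[\mathbf{I};\ca]$.

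The main obstacle — really the only non-bookkeeping step — is the compactness argument producing a finite subcover, and correctly invoking that the $\Phi_\ca$-images behave multiplicatively and additively on $\ca$ (the additive behavior needs the lattice property and the ultrafilter dichotomy (\ref{5.9})). Everything else is routine manipulation of the definitions (\ref{5.2}), (\ref{5.3}), Proposition~\ref{p10.1}, and the identification (\ref{10.8}) of closed sets. One small care point: I should make sure to use $\mathbf{T}_\ca^*[\mathbf{I}]$ rather than $\mathbf{T}_\ca^o[\mathbf{I}]$, but by Proposition~\ref{p10.2} they coincide, so the compactness in (\ref{8.12}) and the closed-set description in (\ref{10.8}) both transfer without trouble. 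Alternatively, one could avoid compactness entirely: a clopen $\mathbb{G}$ has closed complement, so by (\ref{10.8}) $\mathbb{F}_o^*(\ca)\setminus\mathbb{G}=\bigcap_{B\in\cf}B$ for some $\cf\subset(\mathbb{UF})[\mathbf{I};\ca]$; taking complements and using Proposition~\ref{p10.1} plus de Morgan gives $\mathbb{G}=\bigcup_{B\in\cf}(\mathbb{F}_o^*(\ca)\setminus B)$ as a union of basic open sets, and then compactness is still needed to reduce to a finite union — so compactness appears to be genuinely essential here, which is the natural reason the statement is placed after Proposition~\ref{p10.2} and (\ref{10.7}).
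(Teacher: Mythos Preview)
Your proposal is correct and follows essentially the same route as the paper: the forward inclusion uses that each $\Phi_\ca(L)$ is basic open and (via Proposition~\ref{p10.1} / (\ref{10.2})) also closed, and the reverse inclusion uses compactness (\ref{10.7}) to reduce an open cover of a clopen set by basic sets to a finite one, then closure of $(\mathbb{UF})[\mathbf{I};\ca]$ under finite unions. The paper phrases the last step via (\ref{10.100}) ($(\mathbb{UF})[\mathbf{I};\ca]$ is an algebra of sets) rather than directly through the additivity of $\Phi_\ca$, and separately treats the trivial case $\mathbb{G}=\emptyset$, but these are cosmetic differences.
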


\begin{proof} Recall that by statements of Section 2 and (\ref{10.8})
the inclusion \bfn\label{10.9}(\mathbb{UF})[\mathbf{I};\ca] \subset
\mathbf{C}_{\mathbb{F}_o^*(\ca)}\bigl[\mathbf{T}_\ca^*[\mathbf{I}]\bigl].\efn
From (\ref{7.3}), the inclusion
$(\mathbb{UF})[\mathbf{I};\ca]\subset \mathbf{T}_\ca^*[E]$ follows
too. So, by (\ref{10.9})
\bfn\label{10.10}(\mathbb{UF})[\mathbf{I};\ca]\subset
\mathbf{T}_\ca^*[\mathbf{I}] \cap
\mathbf{C}_{\mathbb{F}_o^*(\ca)}\bigl[\mathbf{T}_\ca^*[\mathbf{I}]\bigl].\efn
Let $\Om\in \mathbf{T}_\ca^*[\mathbf{I}] \cap
\mathbf{C}_{\mathbb{F}_o^*(\ca)}\bigl[\mathbf{T}_\ca^*[\mathbf{I}]\bigl].$
Since $\Om$ is open, then by (\ref{7.3}) we obtain that, for some
family \bfn\label{10.11}\mathfrak{W}\in
\cp\bigl((\mathbb{UF})[\mathbf{I};\ca]\bigl),\efn the following
equality is realized: \bfn\label{10.12}\Om =
\bigcup\limits_{W\in\,\mathfrak{W}} W.\efn If $\mathfrak{W} = \e,$
then by (\ref{10.12}) $\Om = \e$ and, as a corollary, $\Om =
\Phi_\ca (\e),$ where $\e\in \ca.$ So, by (\ref{5.3}) we obtain the
implication \bfn\label{10.13}(\mathfrak{W} = \e) \Longrightarrow
\bigl(\Om\in (\mathbb{UF})[\mathbf{I};\ca]\bigl).\efn Let
$\mathfrak{W}\neq \e.$ Then, $\mathfrak{W}\in
\cp^\prime\bigl((\mathbb{UF})[\mathbf{I};\ca]\bigl).$ Since $\Om$ is
a closed subset of a compactum, we have the compactess property of
$\Om;$ then, by (\ref{10.11}), for some $\mathbb{K}\in
\mathrm{Fin}(\mathfrak{W})$ \bfn\label{10.14}\Om =
\bigcup\limits_{W\in\,\mathbb{K}}W.\efn In particular,
$\mathbb{K}\in
\mathrm{Fin}\bigl((\mathbb{UF})[\mathbf{I};\ca]\bigl).$ We note that
$(\mathbb{UF})[\mathbf{I};\ca]$ is closed with respect to finite
unions (indeed, by (\ref{10.100}) $(\mathbb{UF})[\mathbf{I};\ca]$ is
an algebra of sets). Therefore, by (\ref{10.14}) $\Om\in
(\mathbb{UF})[\mathbf{I};\ca]$ in the case $\mathfrak{W}\neq \e.$
So, \bfn\label{10.15}(\mathfrak{W}\neq \e) \Longrightarrow
\bigl(\Om\in (\mathbb{UF})[\mathbf{I};\ca]\bigl).\efn  Using
(\ref{10.13}) and (\ref{10.15}), we obtain that $\Om\in
(\mathbb{UF})[\mathbf{I};\ca]$ in any possible cases. Since the
choice of $\Om$ was arbitrary, the inclusion
\bfn\label{10.15`}\mathbf{T}_\ca ^*[\mathbf{I}] \cap
\mathbf{C}_{\mathbb{F}_o^*(\ca)}\bigl[\mathbf{T}_\ca^*[\mathbf{I}]\bigl]
\subset (\mathbb{UF})[\mathbf{I};\ca]\efn in established. From
(\ref{10.10}) and (\ref{10.15}), the required statement follows.
\end{proof}

So, $(\mathbb{UF})[\mathbf{I};\ca]$ is the family of all open-closed
sets in the nonempty compactum \bfn\label{10.16}
\bigl(\mathbb{F}_o^*(\ca), \mathbf{T}_\ca^*[\mathbf{I}]\bigl) =
\bigl(\mathbb{F}_o^*(\ca), \mathbf{T}_\ca^o[\mathbf{I}]\bigl).\efn
In connection with the above-mentioned property of nonempty
compactum (\ref{10.16}), we recall \cite[ch.\,I]{38}. With the
employment of (\ref{10.1}), the following obvious property is
established: in our case of measurable space with an algebra of sets
\bfn\label{10.17}(\mathbf{I}-\mathrm{ult})[x] \cap \ca \in
\mathbb{F}_o^*(\ca)\ \ \fo x\in \mathbf{I}.\efn

{\bf Remark\,9.3} For a completeness, we consider the scheme  of the
proof of (\ref{10.17}). For this, we note that by (\ref{3.8}) and
the corresponding definition of Section 5
\bfn\label{10.18}(\mathbf{I}-\mathrm{ult})[x] \cap \cl\in
\mathbb{F}^*(\cl)\ \ \fo \cl\in \pi[\mathbf{I}]\ \ \fo x\in
\mathbf{I}.\efn In particular, by (\ref{10.18})
$(\mathbf{I}-\mathrm{ult})[x] \cap \ca\in \mathbb{F}^*(\ca)\ \ \fo
x\in \mathbf{I}.$ Fix $x_*\in \mathbf{I}$ and suppose that
$$\cf_*\,\df\,(\mathbf{I}-\mathrm{ult})[x_*] \cap \ca;$$
of course, $\cf_*\in \mathbb{F}^*(\ca).$ In addition, $\ca\subset
\cp(\mathbf{I}).$  Then, $\fo A\in \ca$ \bfn\label{10.19}(x_*\in
A)\,\vee\,(x_*\in \mathbf{I}\setminus A).\efn Of course, by
(\ref{3.8}), for $A\in \ca$, we have the following obvious
implications: $$\bigl((x_*\in A) \Longrightarrow (A\in
\cf_*)\bigl)\,\&\,\bigl((x_*\in \mathbf{I}\setminus A)
\Longrightarrow (\mathbf{I}\setminus A\in \cf_*)\bigl).$$ Then, by
(\ref{10.19}) $(A\in \cf_*)\,\vee\,(\mathbf{I}\setminus A\in
\cf_*).$ Since the choice of $A$ was arbitrary, by (\ref{10.1})
$\cf_*\in \mathbb{F}_o^*(\ca).$ So, (\ref{10.17}) is established.

Using (\ref{10.17}), we introduce the mapping
\bfn\label{10.20}(\ca-\mathrm{ult})[\mathbf{I}]\,\df\,\bigl((\mathbf{I}-\mathrm{ult})[x]
\cap \ca\bigl)_{x\in \mathbf{I}}\,\in
\mathbb{F}_o^*(\ca)^\mathbf{I}.\efn Of course, in (\ref{10.20}) we
have analog of the mapping $\mathbf{f}$ (\ref{9.12}). But, in the
given case, we realize the immersion of points of the initial set in
the ultrafilter space under other conditions. We will use the
specific character of measurable space with an algebra of sets. Now,
we note the obvious property:
\begin{equation}\label{10.21}\begin{array}{c}\bigl(\fo x\in \mathbf{I}\ \
\fo y\in \mathbf{I}\setminus \{x\}\ \ \exists A\in \ca:(x\in
A)\,\&\,(y\notin A)\bigl) \Longrightarrow\\ \Longrightarrow
\bigl((\ca-\mathrm{ult})\bigl[\mathbf{I}]\in
(\mathrm{bi})[\mathbf{I};(\ca-\mathrm{ult})[\mathbf{I}]^1(\mathbf{I})\bigl]\bigl).
\end{array}\end{equation} In (\ref{10.21}), the statement of the premise has the
following sense: algebra $\ca$ is distinguishing for points of $\mathbf{I}.$

If $\mathcal{J}\in \cp^\prime(\ca),$ then by analogy with Section 4 we suppose that

\begin{equation}\label{10.21`}\begin{array}{c}\bigl(\mathbb{F}^*(\ca
|\,\mathcal{J})\,\df\,\{\cf\in \mathbb{F}^*(\ca)\,|\,\mathcal{J}\subset \cf\}\bigl)\,\&\\
\&\,\bigl(\mathbb{F}_o^*(\ca\,|\,\mathcal{J})\,\df\,\{\cu\in
\mathbb{F}_o^*(\ca)\,|\,\mathcal{J}\subset \cu\}\bigl);
\end{array}\end{equation} of course, $\mathbb{F}_o^*(\ca\,|\,\mathcal{J})\subset
\mathbb{F}^*(\ca\,|\,\mathcal{J})$ and moreover the following
property is valid:\bfn\label{10.21``}\fo \cf\in
\mathbb{F}^*(\ca\,|\,\mathcal{J})\ \ \exists \cu\in
\mathbb{F}_o^*(\ca\,|\,\mathcal{J}): \cf\subset \cu.\efn Returning
to (\ref{10.21}), we note that
\bfn\label{10.22}(\ca-\mathrm{ult})[\mathbf{I}]^{-1}\bigl(\mathbb{F}_o^*(\ca\,|\,
\mathcal{I})\bigl) = \bigcap\limits_{A\in\,\mathcal{I}}A\ \ \fo
\mathcal{I}\in \cp^\prime(\ca).\efn In (\ref{10.22}), we can use
$\mathcal{I}$ as constraints of asymptotic character. Of course,
$\mathbf{F}_o^*(\ca) \subset \mathbf{F}^*(\ca) \subset
\beta_\ca^o[\mathbf{I}]\subset \beta_o[\mathbf{I}]$ (see Section 5).
Then, by (\ref{3.3})
\begin{equation}\label{10.23}(\mathbf{I}-\mathrm{\mathbf{fi}})[\cu]
= \{H\in \cp(\mathbf{I})\,|\,\exists
B\in \cu:\,B\subset H\}\in
\mathfrak{F}[\mathbf{I}]\ \ \fo \cu\in \mathbb{F}_o^*(\ca).
\end{equation} By analogy with (\ref{10.23}) we note that $\mathbb{F}^*(\ca)\subset
\beta_o[\mathbf{I}]$ and $(\mathbf{I}-\mathrm{\mathbf{fi}})[\cf]\in
\mathfrak{F}[\mathbf{I}]\ \ \fo \cf\in \mathbb{F}^*(\ca).$ These
properties permit realize an asymptotic analogs of solutions of the
set (\ref{10.22}). In this capacity, we can use elements of the sets
$\mathbb{F}^*(\ca |\,\mathcal{I})$ and  $\mathbb{F}_o^*(\ca
|\,\mathcal{I}),$ where $\mathcal{I}\in \cp^\prime(\ca)$ is used as
``asymptotic constraints''.  Of course, $\ca$ bounds our
possibilities: we can use only subfamilies of $\ca.$

\begin{propos}\label{p10.4}{\TL} $\mathbb{F}_o^*(\ca) =
\mathrm{cl}\bigl((\ca-\mathrm{ult})[\mathbf{I}]^1(\mathbf{I}),\mathbf{T}_\ca^*[\mathbf{I}]\bigl).$
\end{propos}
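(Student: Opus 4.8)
The plan is to repeat the argument of Proposition~\ref{p9.1} almost verbatim, with the hypothesis $\{x\}\in\cl$ used there replaced by the representation (\ref{10.1}) of ultrafilters of an algebra. One inclusion, namely $\mathrm{cl}\bigl((\ca-\mathrm{ult})[\mathbf{I}]^1(\mathbf{I}),\mathbf{T}_\ca^*[\mathbf{I}]\bigr)\subset\mathbb{F}_o^*(\ca)$, is immediate, since a closure operation applied to a subset of $\mathbb{F}_o^*(\ca)$ cannot leave $\mathbb{F}_o^*(\ca)$; so all the work is in the opposite inclusion.

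For that, I would fix $\cu\in\mathbb{F}_o^*(\ca)$ and an arbitrary neighborhood $\mathbb{H}\in N_{\mathbf{T}_\ca^*[\mathbf{I}]}(\cu)$, and show that $(\ca-\mathrm{ult})[\mathbf{I}]^1(\mathbf{I})\cap\mathbb{H}\neq\e$. Since $\mathbf{T}_\ca^*[\mathbf{I}]=\mathbf{T}_\ca^o[\mathbf{I}]$ by Proposition~\ref{p10.2}, Proposition~\ref{p8.4} provides a local base at $\cu$ of the form $\{\mathbb{F}_o^*(\ca)\setminus\Phi_\ca(L):L\in\ca\setminus\cu\}$, so I may choose $L\in\ca\setminus\cu$ with $\mathbb{F}_o^*(\ca)\setminus\Phi_\ca(L)\subset\mathbb{H}$. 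Because $\mathbf{I}\in\cu$ (axioms of a filter of $\ca$) while $L\notin\cu$, we get $L\neq\mathbf{I}$; and $L\in\ca$ gives $L\subset\mathbf{I}$, so $\mathbf{I}\setminus L\neq\e$. Pick $x\in\mathbf{I}\setminus L$ and set $\cf_x\df(\mathbf{I}-\mathrm{ult})[x]\cap\ca$. By (\ref{10.17}) $\cf_x\in\mathbb{F}_o^*(\ca)$, and by (\ref{10.20}) $\cf_x\in(\ca-\mathrm{ult})[\mathbf{I}]^1(\mathbf{I})$. Since $x\notin L$, (\ref{3.8}) gives $L\notin\cf_x$, hence by (\ref{5.2}) $\cf_x\notin\Phi_\ca(L)$, i.e. $\cf_x\in\mathbb{F}_o^*(\ca)\setminus\Phi_\ca(L)\subset\mathbb{H}$. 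This yields $(\ca-\mathrm{ult})[\mathbf{I}]^1(\mathbf{I})\cap\mathbb{H}\neq\e$; letting $\mathbb{H}$ and then $\cu$ range over all choices, one obtains $\mathbb{F}_o^*(\ca)\subset\mathrm{cl}\bigl((\ca-\mathrm{ult})[\mathbf{I}]^1(\mathbf{I}),\mathbf{T}_\ca^*[\mathbf{I}]\bigr)$, and the two sides coincide.

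I do not expect a serious obstacle; the only genuinely non-formal point, and the one that distinguishes this from Proposition~\ref{p9.1}, is the justification that the point-generated family $(\mathbf{I}-\mathrm{ult})[x]\cap\ca$ is actually an ultrafilter. There it rested on $\{x\}\in\cl$; here it comes instead from (\ref{10.1})/(\ref{10.17}), i.e. from the fact that $(\mathbf{I}-\mathrm{ult})[x]\cap\ca$ decides every $A\in\ca$ against its complement $\mathbf{I}\setminus A$. Note also that, in contrast with the compactness arguments of Section~7, no compactness of $\mathbf{T}_\ca^*[\mathbf{I}]$ is used: everything rests on transporting the local base of Proposition~\ref{p8.4} through the identification of Proposition~\ref{p10.2}.
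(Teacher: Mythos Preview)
Your proof is correct. The route differs from the paper's only in which description of the local base you use. You invoke Proposition~\ref{p10.2} to pass to $\mathbf{T}_\ca^o[\mathbf{I}]$ and then Proposition~\ref{p8.4} to obtain basic neighborhoods of the form $\mathbb{F}_o^*(\ca)\setminus\Phi_\ca(L)$ with $L\in\ca\setminus\cu$, exactly as in Proposition~\ref{p9.1}. The paper instead works directly with $\mathbf{T}_\ca^*[\mathbf{I}]$ and the open base (\ref{7.3}): for each $A\in\cf$ one picks $a\in A$ and observes that $(\mathbf{I}-\mathrm{ult})[a]\cap\ca\in\Phi_\ca(A)$, so every basic open neighborhood $\Phi_\ca(\Lambda)$ with $\Lambda\in\cf$ meets the set of trivial ultrafilters. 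The two descriptions match under (\ref{10.2}), since $\mathbb{F}_o^*(\ca)\setminus\Phi_\ca(L)=\Phi_\ca(\mathbf{I}\setminus L)$ and $L\notin\cu\Leftrightarrow\mathbf{I}\setminus L\in\cu$; your point $x\in\mathbf{I}\setminus L$ is precisely the paper's $a\in A$ with $A=\mathbf{I}\setminus L$. The paper's path is marginally shorter because it avoids the detour through Propositions~\ref{p10.2} and~\ref{p8.4}, but the content is identical.
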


\begin{proof} Fix $\cf\in \mathbb{F}_o^*(\ca).$ Let $A\in \cf.$ Then
$A\in \cp^\prime(\mathbf{I}).$ So, $A\neq \e$ and $A\subset
\mathbf{I}.$ Choose arbitrary $a\in A.$ Then, by (\ref{10.20})
\bfn\label{10.24}(\mathbf{I}-\mathrm{ult})[a] \cap \ca =
(\ca-\mathrm{ult})[\mathbf{I}](a) \in
(\ca-\mathrm{ult})[\mathbf{I}]^1(\mathbf{I}).\efn By the choice of
$a$ we have the inclusion $A\in (\mathbf{I}-\mathrm{ult})[a].$ Since
$\cf\subset \ca,$ we obtain that $A\in \ca.$ Then, by (\ref{10.24})
$A\in (\mathbf{I}-\mathrm{ult})[a] \cap \ca.$ Since
$(\mathbf{I}-\mathrm{ult})[a] \cap \ca\in \mathbb{F}_o^*(\ca),$ by
(\ref{5.2}) \bfn\label{10.25}(\mathbf{I}-\mathrm{ult})[a] \cap
\ca\in \Phi_\ca (A).\efn  By (\ref{10.24}) and (\ref{10.25}) we
obtain the following property $$\Phi_\ca(A) \cap
(\ca-\mathrm{ult})[\mathbf{I}]^1(\mathbf{I}) \neq \e.$$ Since the
choice of $A$ was arbitrary, we have (see (\ref{9.3})) the statement
\bfn\label{10.26}\Phi_\ca(L) \cap
(\ca-\mathrm{ult})[\mathbf{I}]^1(\mathbf{I}) \neq \e\ \ \fo L\in
\cf.\efn Choose arbitrary $\Om\in N_{\mathbf{T}_\ca^*[\mathbf{I}]}
(\cf).$ Then, for some $\Om^o\in N_{\mathbf{T}_\ca^*[\mathbf{I}]}^o
(\cf),$ the inclusion $\Om^o\subset \Om$ is valid. Therefore,
$\Om^o\in \mathbf{T}_\ca^*[\mathbf{I}]$ and $\cf\in \Om^o.$ By
(\ref{7.3}), there exists $\La\in \cf$ such that
\bfn\label{10.27}\Phi_\ca(\La) \subset \Om^o.\efn From
(\ref{10.26}), the property $\Phi_\ca(\La) \cap
(\ca-\mathrm{ult})[\mathbf{I}]^1(\mathbf{I})\neq \e$ is valid. By
(\ref{10.27}) we obtain that $$\Om \cap
(\ca-\mathrm{ult})[\mathbf{I}]^1(\mathbf{I})\neq \e$$ (indeed,
$\Phi_\ca(\La) \subset \Om).$ Since the choice of $\Om$ was
arbitrary, $$S \cap (\ca-\mathrm{ult})[\mathbf{I}]^1(\mathbf{I})
\neq \e\ \ \fo S\in N_{\mathbf{T}_\ca^*[\mathbf{I}]}(\cf).$$ Then,
$\cf \in
\mathrm{cl}\bigl((\ca-\mathrm{ult})[\mathbf{I}]^1(\mathbf{I}),
\mathbf{T}_\ca^*[\mathbf{I}]\bigl).$ So, the inclusion
$$\mathbb{F}_o^*(\ca) \subset
\mathrm{cl}\bigl((\ca-\mathrm{ult})[\mathbf{I}]^1(\mathbf{I}),
\mathbf{T}_\ca^*[\mathbf{I}]\bigl)$$ is established. The opposite
inclusion is  obvious. \end{proof}

We note that Proposition~\ref{p10.4} is similar to
Proposition~\ref{p9.1}. But, in the given section, the condition
\bfn\label{10.28}\{x\}\in \ca\ \ \fo x\in \ca\efn was supposed not.
In construtions of Section 8 (in particular, in
Proposition~\ref{p9.1}), the condition similar to (\ref{10.28}) is
essential. So, Proposition~\ref{p10.4} has the independent meaning.

\section{Attraction sets under the restriction in the form of algebra of sets}
\setcounter{equation}{0}

In the following, we fix a nonempty set $E,$ a TS
$(\mathbf{H},\tau),$ where $\mathbf{H}\neq \e,$ and a mapping
$\mathbf{h}\in \mathbf{H}^E.$ Elements $e\in E$ are considered as
usual solutions and elements $y\in \mathbf{H}$ play the role of some
estimates. The natural variant of an obtaining of $y$ is realized in
the form $y = \mathbf{h}(e),$ where $e\in E.$ But, we admit the
possibility of the limit realization of $y.$ This is natural in
questions of asymptotic analysis. In the last case, it is natural to
use ``asymptotic constraints'' in the form of a nonempty subfamilies
of $\cp(E).$ Then, we obtain constructions of Section 4 under $X =
E,\,Y = \mathbf{H},$ and $f =\mathbf{h}.$ But, we admit yet one
possibility: along with ``usual'' AS, we use the sets

\begin{equation}\label{11.1}\begin{array}{c}(\tau-\mathbb{AS})[\mathcal{E}\,|\,\ca]\,\df\,
\{y\in \mathbf{H}\,|\,\exists \cf\in \mathbb{F}^*(\ca\,
|\,\mathcal{E}):\,\mathbf{h}^1[\cf]\,{\stackrel{\tau}{\Longrightarrow}}\,y\}\\  \fo
\ca\in (\mathrm{alg})[E]\ \ \fo \mathcal{E}\in \cp^\prime(\ca).
\end{array}\end{equation}
Of  course, we use remarks of the conclusion of the previous section.

\begin{propos}\label{p11.1}{\TL} If $\ca\in (\mathrm{alg})[E]$ and $\mathcal{E}\in
\cp^\prime(\ca),$ then \bfn\label{11.2}(\tau-\mathbb{AS})[\mathcal{E}\,|\,\ca] = \{y\in
\mathbf{H}\,|\,\exists \cu\in \mathbb{F}_o^*(\ca\,
|\,\mathcal{E}):\,\mathbf{h}^1[\cu]{\stackrel{\tau}{\Longrightarrow}}\,y\}.\efn
\end{propos}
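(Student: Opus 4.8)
The plan is to prove the two inclusions between the left-hand side $(\tau-\mathbb{AS})[\mathcal{E}\,|\,\ca]$ and the right-hand side set, call it $\mathbf{U}$. The inclusion $\mathbf{U}\subset (\tau-\mathbb{AS})[\mathcal{E}\,|\,\ca]$ is the routine one: since $\mathbb{F}_o^*(\ca\,|\,\mathcal{E})\subset \mathbb{F}^*(\ca\,|\,\mathcal{E})$ directly from (\ref{10.21`}), any ultrafilter $\cu$ witnessing membership in $\mathbf{U}$ is in particular a filter of $\ca$ containing $\mathcal{E}$, so it also witnesses membership in the left-hand side. No work is needed beyond citing (\ref{10.21`}).

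For the reverse inclusion, fix $y\in (\tau-\mathbb{AS})[\mathcal{E}\,|\,\ca]$; then by (\ref{11.1}) there is a filter $\cf\in \mathbb{F}^*(\ca\,|\,\mathcal{E})$ with $\mathbf{h}^1[\cf]\,{\stackrel{\tau}{\Longrightarrow}}\,y$. Invoke (\ref{10.21``}): there exists $\cu\in \mathbb{F}_o^*(\ca\,|\,\mathcal{E})$ with $\cf\subset \cu$. I would then show $\mathbf{h}^1[\cu]\,{\stackrel{\tau}{\Longrightarrow}}\,y$, which places $y$ in $\mathbf{U}$. The argument here mirrors the passage in the proof of Proposition~\ref{p4.2}: from $\cf\subset \cu$ and (\ref{2.1}) we get $\mathbf{h}^1[\cf]\subset \mathbf{h}^1[\cu]$, hence $(\mathbf{H}-\mathbf{fi})[\mathbf{h}^1[\cf]]\subset (\mathbf{H}-\mathbf{fi})[\mathbf{h}^1[\cu]]$. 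By definition (\ref{3.4}) of filter-base convergence, $\mathbf{h}^1[\cf]\,{\stackrel{\tau}{\Longrightarrow}}\,y$ means $N_\tau(y)\subset (\mathbf{H}-\mathbf{fi})[\mathbf{h}^1[\cf]]$; combining with the previous inclusion gives $N_\tau(y)\subset (\mathbf{H}-\mathbf{fi})[\mathbf{h}^1[\cu]]$, i.e. $\mathbf{h}^1[\cu]\,{\stackrel{\tau}{\Longrightarrow}}\,y$ again by (\ref{3.4}). One should note here that $\mathbf{h}^1[\cf]$ and $\mathbf{h}^1[\cu]$ are genuine filter bases on $\mathbf{H}$ (apply (\ref{4.1}) with $X=E$, $Y=\mathbf{H}$, $f=\mathbf{h}$, using $\cf,\cu\in \beta_o[E]$), so that (\ref{3.4}) is legitimately applicable.

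I do not expect any serious obstacle: the statement is essentially a transcription of the filter-versus-ultrafilter equivalence (Propositions~\ref{p4.1}, \ref{p4.2}) into the measurable-space setting of Section~10, and the only genuine ingredient is the maximal-extension property (\ref{10.21``}) guaranteeing an ultrafilter of $\ca$ above a given filter of $\ca$ while preserving the constraint $\mathcal{E}\subset\,\cdot\,$. The one point requiring a moment's care is making sure that $\cu\in \mathbb{F}_o^*(\ca)$ together with $\mathcal{E}\subset \cf\subset \cu$ really yields $\cu\in \mathbb{F}_o^*(\ca\,|\,\mathcal{E})$, but this is immediate from the definition in (\ref{10.21`}). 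Thus the write-up is short: cite (\ref{10.21`}) for one direction, and for the other combine (\ref{10.21``}), monotonicity of $\mathbf{h}^1[\cdot]$ and $(\mathbf{H}-\mathbf{fi})[\cdot]$, and the definition (\ref{3.4}) of convergence, exactly as in the final paragraph of the proof of Proposition~\ref{p4.2}.
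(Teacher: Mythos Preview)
Your proposal is correct and follows essentially the same route as the paper's own proof: the easy inclusion comes from $\mathbb{F}_o^*(\ca\,|\,\mathcal{E})\subset \mathbb{F}^*(\ca\,|\,\mathcal{E})$ via (\ref{10.21`}), and the nontrivial inclusion proceeds by extending the witnessing filter $\cf$ to an ultrafilter $\mathfrak{U}$ of $\ca$ via (\ref{10.21``}), then using $\mathbf{h}^1[\cf]\subset\mathbf{h}^1[\mathfrak{U}]$ together with (\ref{3.4}) to upgrade the convergence. The paper explicitly frames this as analogous to the proof of Proposition~\ref{p4.2}, exactly as you do.
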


\begin{proof} We use reasoning analogous to the proof of
Proposition~\ref{p4.2}. We denote by $\Om$ the set on the right side
of (\ref{11.2}). Since $\mathbb{F}_o^*(\ca |\,\mathcal{E})\subset
\mathbb{F}^*(\ca |\,\mathcal{E})$ (see Section 9), by (\ref{11.1})
\bfn\label{11.3}\Om\subset
(\tau-\mathbb{AS})[\mathcal{E}|\,\ca].\efn Let $y_o\in
(\tau-\mathbb{AS})[\mathcal{E}|\,\ca].$ Then, by (\ref{11.1})
$y_o\in \mathbf{H}$ and, for some $\cf\in \mathbb{F}^*(\ca
|\,\mathcal{E}),$
\bfn\label{11.4}\mathbf{h}^1[\cf]\,{\stackrel{\tau}{\Longrightarrow}}\,y_o.\efn
Recall that $\cf\in \beta_o[E]$ (see Section 9). Therefore, by
(\ref{4.1}) $\mathbf{h}^1[\cf]\in \beta_o[\mathbf{H}].$ Then,
(\ref{11.4}) denotes that \bfn\label{11.5}N_\tau(y_o) \subset
(\mathbf{H}-\mathrm{\mathbf{fi}})\bigl[\mathbf{h}^1[\cf]\bigl]\efn
(see (\ref{3.4})). In addition, by the choice of $\cf$ we have the
inclusion $\mathcal{E}\subset \cf;$ see (\ref{10.21`}). By
(\ref{10.21``}), for some $\mathfrak{U}\in \mathbb{F}_o^*(\ca
|\,\mathcal{E}),$ the inclusion $\cf\subset \mathfrak{U}$ is valid.
Then,
$$\mathbf{h}^1[\cf]\subset \mathbf{h}^1[\mathfrak{U}].$$ As a corollary, by(\ref{3.3})
and (\ref{11.5}) $$N_\tau(y_o) \subset
(\mathbf{H}-\mathrm{\mathbf{fi}})\bigl[\mathbf{h}^1[\cf]\bigl]
\subset
(\mathbf{H}-\mathrm{\mathbf{fi}})\bigl[\mathbf{h}^1[\mathfrak{U}]\bigl],$$
where $\mathbf{h}^1[\mathfrak{U}] \in \beta_o[\mathbf{H}]$ (see
Section 9). Then, by (\ref{3.4})
\bfn\label{11.6}\mathbf{h}^1[\mathfrak{U}]\,{\stackrel{\tau}{\Longrightarrow}}\,y_o.\efn
By definition of $\Om$ we obtain that $y_o\in \Om.$ Since the choice
of $y_o$ was arbitrary, the inclusion
\bfn\label{11.7}(\tau-\mathbb{AS})[\mathcal{E} |\,\ca]\subset
\Om\efn is established. Using (\ref{11.3}) and (\ref{11.7}), we
obtain the required equality
\bfn\label{11.7`}(\tau-\mathbb{AS})[\mathcal{E} |\,\ca] = \Om.\efn
From the definition of $\Om$ and (\ref{11.7`}), we obtain
(\ref{11.2}). \end{proof}

Recall that $\cp(E)\in (\mathrm{alg})[E]$ and therefore
$$(\tau-\mathbb{AS})[\mathcal{E} |\,\cp(E)]\in \cp(\mathbf{H})\ \ \fo\mathcal{E}
\in \cp^\prime\bigl(\cp(E)\bigl).$$ By definitions of Section 3, (\ref{7.5}), and
(\ref{10.21`}) we obtain that \bfn\label{11.9}\mathfrak{F}_\mathbf{u}^o[E |\,\mathcal{E}]
= \mathbb{F}_o^*\bigl(\cp(E) |\,\mathcal{E}\bigl)\ \ \fo \mathcal{E}\in
\cp^\prime\bigl(\cp(E)\bigl).\efn From Propositions~\ref{p4.2} and \ref{p11.1}, we have
(see (\ref{11.9})) the property:
$$(\mathrm{\mathbf{as}})[E;\mathbf{H};\tau;\mathbf{h};\mathcal{E}] =
(\tau-\mathbb{AS})[\mathcal{E} |\,\cp(E)]\ \ \fo \mathcal{E}\in
\cp^\prime\bigl(\cp(E)\bigl).$$ So, our new construction is
coordinated with AS of Section 4. Moreover, under $\ca\in
(\mathrm{alg})[E],$ we can consider AS
$(\mathrm{\mathbf{as}})[E;\mathbf{H};\tau;\mathbf{h};\mathcal{E}]$
for $\mathcal{E}\in \cp^\prime(\ca).$

\begin{propos}\label{p11.2}{\TL} If $\ca\in (\mathrm{alg})[E]$ and $\mathcal{E}\in
\cp^\prime(\ca),$ then
\bfn\label{11.9`}(\tau-\mathbb{AS})[\mathcal{E}\,|\,\ca]\subset
(\mathrm{\mathbf{as}})
[E;\mathbf{H};\tau;\mathbf{h};\mathcal{E}].\efn \end{propos}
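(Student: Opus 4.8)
The plan is to unfold both sides and show that every point in $(\tau-\mathbb{AS})[\mathcal{E}\,|\,\ca]$ is an attraction element in the sense of $(\mathrm{\mathbf{as}})[E;\mathbf{H};\tau;\mathbf{h};\mathcal{E}]$. Fix $\ca\in(\mathrm{alg})[E]$ and $\mathcal{E}\in\cp^\prime(\ca)$, and pick $y\in(\tau-\mathbb{AS})[\mathcal{E}\,|\,\ca]$. By Proposition~\ref{p11.1} there is an ultrafilter $\cu\in\mathbb{F}_o^*(\ca\,|\,\mathcal{E})$ with $\mathbf{h}^1[\cu]\,{\stackrel{\tau}{\Longrightarrow}}\,y$; in particular $\mathcal{E}\subset\cu$ and $\cu\subset\ca$. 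The natural bridge to Section~4 is to pass from the ultrafilter $\cu$ of the algebra $\ca$ to a genuine filter (or ultrafilter) on $E$. The cleanest choice is $\mathcal{G}\df(\mathbf{I}-\mathrm{\mathbf{fi}})[\cu]=(E-\mathrm{\mathbf{fi}})[\cu]=\{H\in\cp(E)\,|\,\exists B\in\cu:\,B\subset H\}$, which by (\ref{3.3}) (and the inclusion $\mathbb{F}_o^*(\ca)\subset\beta_o[E]$ noted before (\ref{10.23})) lies in $\mathfrak{F}[E]$. Since $\mathcal{E}\subset\cu\subset\mathcal{G}$, we get $\mathcal{G}\in\mathfrak{F}_o[E\,|\,\mathcal{E}]$.

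The second step is to check that the convergence is preserved when we replace $\cu$ by $\mathcal{G}$. Since $\cu\subset\mathcal{G}$ we have $\mathbf{h}^1[\cu]\subset\mathbf{h}^1[\mathcal{G}]$ by (\ref{2.1}), hence $(\mathbf{H}-\mathrm{\mathbf{fi}})\bigl[\mathbf{h}^1[\cu]\bigr]\subset(\mathbf{H}-\mathrm{\mathbf{fi}})\bigl[\mathbf{h}^1[\mathcal{G}]\bigr]$, both members being filter bases on $\mathbf{H}$ by (\ref{4.1}). From $\mathbf{h}^1[\cu]\,{\stackrel{\tau}{\Longrightarrow}}\,y$ and (\ref{3.4}) we have $N_\tau(y)\subset(\mathbf{H}-\mathrm{\mathbf{fi}})\bigl[\mathbf{h}^1[\cu]\bigr]$, so $N_\tau(y)\subset(\mathbf{H}-\mathrm{\mathbf{fi}})\bigl[\mathbf{h}^1[\mathcal{G}]\bigr]$, i.e. $\mathbf{h}^1[\mathcal{G}]\,{\stackrel{\tau}{\Longrightarrow}}\,y$ again by (\ref{3.4}). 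This is exactly the monotonicity argument already used in the proofs of Propositions~\ref{p4.2} and \ref{p11.1}, so it is routine. Therefore $y\in\{y\in\mathbf{H}\,|\,\exists\cf\in\mathfrak{F}_o[E\,|\,\mathcal{E}]:\,\mathbf{h}^1[\cf]\,{\stackrel{\tau}{\Longrightarrow}}\,y\}$, which by Proposition~\ref{p4.1} (with $X=E$, $Y=\mathbf{H}$, $f=\mathbf{h}$, $\mathcal{X}=\mathcal{E}$) equals $(\mathrm{\mathbf{as}})[E;\mathbf{H};\tau;\mathbf{h};\mathcal{E}]$. Since $y$ was arbitrary, (\ref{11.9`}) follows.

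The only point that needs a moment's care — and the place I would expect a subtle slip — is the very first reduction: one must make sure that the object built from $\cu$ genuinely lands in $\mathfrak{F}_o[E\,|\,\mathcal{E}]$, i.e. that $(E-\mathrm{\mathbf{fi}})[\cu]$ is a \emph{filter} on $E$ (not merely a filter base) and that it contains $\mathcal{E}$. The filter property is (\ref{3.3}) applied to the filter base $\cu\in\beta_o[E]$; the inclusion $\mathcal{E}\subset(E-\mathrm{\mathbf{fi}})[\cu]$ holds because each $S\in\mathcal{E}$ belongs to $\cu$ (as $\mathcal{E}\subset\cu$) and $S\subset S$. Alternatively, one could invoke Proposition~\ref{p11.1} together with (\ref{7.7}) to extend $\cu$ to an ultrafilter $\widetilde{\cu}\in\mathfrak{F}_\mathbf{u}[E]$ with $\cu=\widetilde{\cu}\cap\ca$, note $\mathcal{E}\subset\cu\subset\widetilde{\cu}$ so that $\widetilde{\cu}\in\mathfrak{F}_\mathbf{u}^o[E\,|\,\mathcal{E}]$, and then apply Proposition~\ref{p4.2} instead of Proposition~\ref{p4.1}; the convergence transfer $\mathbf{h}^1[\cu]\,{\stackrel{\tau}{\Longrightarrow}}\,y\ \Rightarrow\ \mathbf{h}^1[\widetilde{\cu}]\,{\stackrel{\tau}{\Longrightarrow}}\,y$ is the same monotonicity step. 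Either route gives the inclusion; I would present the first, since it keeps the argument self-contained within the filter formalism of Sections~3 and~4. It is worth remarking that equality in (\ref{11.9`}) need not hold in general, because $\mathbb{F}^*(\ca\,|\,\mathcal{E})$ only sees subsets of $\ca$, whereas $(\mathrm{\mathbf{as}})[E;\mathbf{H};\tau;\mathbf{h};\mathcal{E}]$ ranges over all filters on $E$ refining $\mathcal{E}$; this is presumably the subject of a subsequent statement imposing extra hypotheses on $\ca$.
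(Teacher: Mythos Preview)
Your proof is correct. Your alternative route---extending $\cu$ to an ultrafilter $\widetilde{\cu}\in\mathfrak{F}_\mathbf{u}[E]$ via (\ref{7.7}) and then invoking Proposition~\ref{p4.2}---is precisely the argument the paper gives. Your primary route, by contrast, bypasses (\ref{7.7}) entirely: you pass from $\cu$ to the generated filter $(E-\mathrm{\mathbf{fi}})[\cu]\in\mathfrak{F}[E]$ and land in the filter characterization of Proposition~\ref{p4.1} rather than the ultrafilter characterization of Proposition~\ref{p4.2}. This is a genuine (if modest) simplification, since (\ref{7.7}) ultimately rests on the existence of ultrafilters majorizing a given filter (i.e.\ on choice), whereas forming $(E-\mathrm{\mathbf{fi}})[\cu]$ is entirely constructive. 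Incidentally, for your primary route you did not even need Proposition~\ref{p11.1}: working directly with a filter $\cf\in\mathbb{F}^*(\ca\,|\,\mathcal{E})$ from the definition (\ref{11.1}) would suffice, since $(E-\mathrm{\mathbf{fi}})[\cf]$ is already a filter on $E$ containing $\mathcal{E}$. Your closing remark about equality requiring further hypotheses is exactly right and is the content of the subsequent Proposition~\ref{p11.3}.
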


\begin{proof} We use (\ref{7.7}). Choose $y_*\in (\tau-\mathbb{AS})[\mathcal{E}\,|\,\ca].$
Then, $y_*\in \mathbf{H}$ and, for some $\cu_*\in \mathbb{F}_o^*(\ca\,|\,\mathcal{E}),$
the convergence
\bfn\label{11.10}\mathbf{h}^1[\cu_*]\,{\stackrel{\tau}{\Longrightarrow}}\,y_*\efn is
valid. Then, $\cu_*\in \mathbb{F}_o^*(\ca)$ and $\mathcal{E}\subset \cu_*;$ see
(\ref{10.21`}).  By (\ref{7.7}) for some $\cu^*\in \mathfrak{F}_\mathbf{u}[E],$ the
equality $\cu_* = \cu^* \cap \ca$ is valid. Then, $\mathcal{E}\subset \cu^*.$ As a
corollary, $\cu^*\in \mathfrak{F}_\mathbf{u}^o[E\,|\,\mathcal{E}].$ Now, we return to
(\ref{11.10}). In addition, $\cu_*\in \beta_o[E].$ Therefore, $\mathbf{h}^1[\cu_*]\in
\beta_o[\mathbf{H}]$ and by (\ref{3.3})
$$(\mathbf{H}-\mathrm{\mathbf{fi}})\bigl[\mathbf{h}^1[\cu_*]\bigl]\in \mathfrak{F}[\mathbf{H}].$$
From (\ref{3.4}) and (\ref{11.10}), we have the obvious inclusion
\bfn\label{11.11}N_\tau(y_*)\subset
(\mathbf{H}-\mathrm{\mathbf{fi}})\bigl[\mathbf{h}^1[\cu_*]\bigl].\efn
In addition, $\cu^*\in \beta_o[E]$ and $\mathbf{h}^1[\cu^*]\in
\beta_o[\mathbf{H}];$ see (\ref{4.1}). Since $\cu_*\subset \cu^*,$
the inclusion $\mathbf{h}^1[\cu_*] \subset \mathbf{h}^1[\cu^*]$ is
valid. As a corollary, by (\ref{3.3})
$$(\mathbf{H}-\mathrm{\mathbf{fi}})\bigl[\mathbf{h}^1[\cu_*]\bigl]\subset
(\mathbf{H}-\mathrm{\mathbf{fi}})\bigl[\mathbf{h}^1[\cu^*]\bigl].$$
Using (\ref{11.11}), we obtain the basic inclusion
\bfn\label{11.12}N_\tau(y_*) \subset
(\mathbf{H}-\mathrm{\mathbf{fi}})\bigl[\mathbf{h}^1[\cu^*]\bigl].\efn
From (\ref{3.4}) and (\ref{11.12}), we obtain the following
convergence
\bfn\label{11.13}\mathbf{h}^1[\cu^*]\,{\stackrel{\tau}{\Longrightarrow}}\,y_*.\efn
So, $\cu^*\in \mathfrak{F}_\mathbf{u}^o[E\,|\,\mathcal{E}]$ has the
property (\ref{11.13}). Then, by Proposition~\ref{p4.2} $$y_*\in
(\mathrm{\mathbf{as}})[E;\mathbf{H};\tau;\mathbf{h};\mathcal{E}].$$
Since the choice of $y_*$ was arbitrary, the required inclusion
(\ref{11.9`}) is established. \end{proof}

So, by (\ref{11.1}) and (\ref{11.2}) some ``partial'' AS are
defined. Of course, the case for which (\ref{11.9`}) is converted in
a equality is very interesting. For investigation of this case, we
consider auxiliary constructions. In the following, in this section,
we fix $\ca\in (\mathrm{alg})[E].$ So, $(E,\ca)$ is a measurable
space with an algebra of sets. In this case, we can supplement the
property (\ref{7.7}). Namely, \bfn\label{11.14}\cu \cap \ca\in
\mathbb{F}_o^*(\ca)\ \ \fo \cu\in \mathfrak{F}_\mathbf{u}[E].\efn

{\bf Remark\,10.1}. We omit the sufficiently simple proof of
(\ref{11.14}). Now, we are restricted to brief remarks. Namely, by
ultrafilter $\cu\in \mathfrak{F}_\mathbf{u}[E]$ we can realize a
finitely additive (0,1)-measure $\mu$ on the family $\cp(E)$
supposing that $\mu(L)\,\df\,1$ under $L\in \cu$ and
$\mu(\La)\,\df\,0$ under $\La\in \cp(E)\setminus \cu.$ In connection
with such possibility, we use \cite[(7.6.17)]{32} (moreover, see
\cite[(7.6.7)]{32}). The natural narrowing $\nu$ of $\mu$ on our
algebra $\ca$ is finitely additive (0,1)-measure on $\ca$ (of
course, $\nu = (\mu |\,\ca)).$ Therefore, for some $\mathcal{V}\in
\mathbb{F}_o^*(\ca),$ by \cite[(7.6.17)]{32} $\nu$ is defined by the
rule \bfn\label{11.15}\bigl(\nu(A) = 1\ \ \fo A\in
\mathcal{V}\bigl)\,\&\,\bigl(\nu(\widetilde{A}) = 0\ \ \fo
\widetilde{A}\in \ca \setminus \mathcal{V}\bigl).\efn On the other
hand, the family $\cu \cap \ca$ realizes $\nu$ by the obvious rule:
\bfn\label{11.16}\bigl(\nu(A) = 1\ \ \fo A\in \cu \cap
\ca\bigl)\,\&\, \bigl(\nu(\widetilde{A}) = 0\ \ \fo \widetilde{A}\in
\ca \setminus (\cu \cap \ca)\bigl).\efn From (\ref{11.15}) and
(\ref{11.16}), the required equality $\cu \cap \ca = \mathcal{V}$
follows. Then, by the choice of $\mathcal{V}$ we have the inclusion
$\cu \cap \ca \in \mathbb{F}_o^*(\ca).$

Using (\ref{7.7}) and (\ref{11.14}), we obtain that
\bfn\label{11.17} \mathbb{F}_o^*(\ca) = \{\cu \cap \ca:\,\cu\in
\mathfrak{F}_\mathbf{u} [E]\}.\efn By (\ref{11.17}) we establish the
natural connection of $\mathfrak{F}_\mathbf{u}[E]$ and
$\mathbb{F}_o^*(\ca).$ Now, we consider some other auxiliary
properties.

If $\mathcal{B}\in \beta_o[\mathbf{H}]$ and $z\in \mathbf{H},$ then we have the following
equivalence \bfn\label{11.18}(\mathcal{B}\,{\stackrel{\tau}{\Longrightarrow}}\,z)
\Longleftrightarrow \bigl(N_\tau^o(z) \subset
(\mathbf{H}-\mathrm{\mathbf{fi}})[\mathcal{B}]\bigl).\efn Of course, we can use instead
of $\mathcal{B}$ the corresponding image of a filter base in $E.$ Indeed, by (\ref{4.1})
and (\ref{11.18}) $\fo \mathcal{B}\in \beta_o[E]\ \ \fo z\in \mathbf{H}$
\bfn\label{11.19}(\mathbf{h}^1[\mathcal{B}]\,{\stackrel{\tau}{\Longrightarrow}}\,z)
\Longleftrightarrow \bigl(N_\tau^o(z) \subset
(\mathbf{H}-\mathrm{\mathbf{fi}})\bigl[\mathbf{h}^1[\mathcal{B}]\bigl]\bigl).\efn

Moreover, in connection with (\ref{11.19}), we note that $\fo
\mathcal{B}\in \beta_o[E]\ \ \fo z\in \mathbf{H}$
\bfn\label{11.20}(\mathbf{h}^1[\mathcal{B}]\,{\stackrel{\tau}{\Longrightarrow}}\,z)
\Longleftrightarrow \bigl(\mathbf{h}^{-1}[N_\tau^o(z)] \subset
(E-\mathrm{\mathbf{fi}})[\mathcal{B}]\bigl).\efn

{\bf  Remark\,10.2}. Consider the proof of (\ref{11.20}). Fix
$\mathcal{B}\in \beta_o[E]$ and $z\in \mathbf{H}.$ Let
$\mathbf{h}^1[\mathcal{B}]\,{\stackrel{\tau}{\Rightarrow}}\,z.$
Then, by (\ref{11.19})
$$N_\tau^o(z) \subset
(\mathbf{H}-\mathrm{\mathbf{fi}})\bigl[\mathbf{h}^1[\mathcal{B}]\bigl].$$
Therefore, for any $G_*\in N_\tau^o(z)$ there exists $B_*\in
\mathcal{B}$ such that $\mathbf{h}^1(B_*)\subset G_*.$ As a
corollary, $$B_*\subset
\mathbf{h}^{-1}\bigl(\mathbf{h}^1(B_*)\bigl)\subset
\mathbf{h}^{-1}(G_*).$$ Then, $\mathbf{h}^{-1}(G_*)\in
(E-\mathrm{\mathbf{fi}})[\mathcal{B}].$ Since the choice of $G_*$
was arbitrary, $$\mathbf{h}^{-1}[N_\tau^o(z)]\subset
(E-\mathrm{\mathbf{fi}})[\mathcal{B}].$$ So,
$(\mathbf{h}^1[\mathcal{B}]\,{\stackrel{\tau}{\Longrightarrow}}\,z)\Longrightarrow
\bigl(\mathbf{h}^{-1}[N_\tau^o(z)]\subset
(E-\mathrm{\mathbf{fi}})[\mathcal{B}]\bigl).$ Let
\bfn\label{11.21}\mathbf{h}^{-1}[N_\tau^o(z)]\subset
(E-\mathrm{\mathbf{fi}})[\mathcal{B}].\efn Choose arbitrary
neighborhood $G^*\in N_\tau^o(z).$ Then, by (\ref{11.21})
$\mathbf{h}^{-1}(G^*)\in (E-\mathrm{\mathbf{fi}})[\mathcal{B}].$
Therefore, for some $B^*\in \mathcal{B},$ the inclusion $B^*\subset
\mathbf{h}^{-1}(G^*)$ is valid. In addition, $\mathbf{h}^1(B^*)\in
\mathbf{h}^1[\mathcal{B}]$ and $$\mathbf{h}^1(B^*)\subset
\mathbf{h}^1\bigl(\mathbf{h}^{-1}(G^*)\bigl)\subset G^*.$$ Then,
$G^*\in
(\mathbf{H}-\mathrm{\mathbf{fi}})\bigl[\mathbf{h}^1[\mathcal{B}]\bigl].$
Therefore, $N_\tau^o(z) \subset
(\mathbf{H}-\mathrm{\mathbf{fi}})\bigl[\mathbf{h}^1[\mathcal{B}]\bigl]$
and by (\ref{11.19})
$\mathbf{h}^1[\mathcal{B}]\,{\stackrel{\tau}{\Rightarrow}}\,z.$ So,
$$\bigl(\mathbf{h}^{-1}[N_\tau^o(z)]\subset (E-\mathrm{\mathbf{fi}})
[\mathcal{B}]\bigl)
\Longrightarrow(\mathbf{h}^1[\mathcal{B}]\,{\stackrel{\tau}{\Longrightarrow}}\,z\bigl).$$
The proof of (\ref{11.20}) is completed.

We note that, in (\ref{11.20}), we can use  instead of $\mathcal{B}$
arbitrary filter of $(E,\ca).$ In this connection, we recall that by
constructions of Section 5, for any $\cf\in \mathbb{F}^*(\ca),$ we
obtain (in particular) that $\cf\in \beta_o[E]$ and
\bfn\label{11.22}(E-\mathrm{\mathbf{fi}})[\cf] \cap \ca =
(E-\mathrm{\mathbf{fi}})[\cf\,|\,\ca] = \cf.\efn Then, from
(\ref{11.20}) and (\ref{11.22}), we have the following property:
$\fo \cf\in \mathbb{F}^*(\ca)\ \ \fo z\in \mathbf{H}$
\bfn\label{11.23}(\mathbf{h}^1[\cf]\,{\stackrel{\tau}{\Longrightarrow}}\,z)
\Longleftrightarrow \bigl(N_\tau^o(z)\subset
(\mathbf{H}-\mathrm{\mathbf{fi}})\bigl[\mathbf{h}^1[\cf]\bigl]\bigl).\efn
Of course, (\ref{11.23}) is the particular case of (\ref{11.20}); in
(\ref{11.22}), we have the useful addition. We note that $\fo
\mathcal{B}\in \beta_o[\mathbf{H}]\ \ \fo z\in \mathbf{H}\ \ \fo
\mathcal{Z}\in (z-\mathrm{bas})[\tau]$
\bfn\label{11.24}(\mathcal{B}\,{\stackrel{\tau}{\Longrightarrow}}\,z)
\Longleftrightarrow\bigl(\mathcal{Z}\subset (\mathbf{H}-\mathbf{fi}
)[\mathcal{B}]\bigl).\efn

{\bf Remark\,10.3}. Fix $\mathcal{B}\in \beta_o[\mathbf{H}], z\in
\mathbf{H},$ and $\mathcal{Z}\in (z-\mathrm{bas})[\tau].$ Consider
the proof of (\ref{11.24}). By (\ref{2.18}) and (\ref{3.4}) we have
the following  implication
\bfn\label{11.25}(\mathcal{B}\,{\stackrel{\tau}{\Longrightarrow}}\,z)
\Longrightarrow\bigl(\mathcal{Z}\subset (\mathbf{H}-\mathbf{fi}
)[\mathcal{B}]\bigl).\efn Let $\mathcal{Z}\subset
(\mathbf{H}-\mathrm{\mathbf{fi}})[\mathcal{B}].$ Choose arbitrary
$S\in N_\tau(z).$ Then, by (\ref{2.17}), for some $Z\in
\mathcal{Z},$ the inclusion $Z\subset S$ is valid. Since $Z\in
(\mathbf{H}-\mathrm{\mathbf{fi}})[\mathcal{B}],$ by filter axioms
(see \ref{3.1})) $S\in
(\mathbf{H}-\mathrm{\mathbf{fi}})[\mathcal{B}].$ So, the inclusion
$N_\tau(z) \subset (\mathbf{H}-\mathrm{\mathbf{fi}})[\mathcal{B}]$
is established. By (\ref{3.4}) we have the convergence
$\mathcal{B}\,{\stackrel{\tau}{\Rightarrow}}\,z.$ So, $$
\bigl(\mathcal{Z}\subset
(\mathbf{H}-\mathrm{\mathbf{fi}})[\mathcal{B}]\bigl) \Longrightarrow
(\mathcal{B}\,{\stackrel{\tau}{\Longrightarrow}}\,z).$$ Now, with
the employment of (\ref{11.25}), we obtain (\ref{11.24}).

We note the following obvious corollary of (\ref{11.24}) (in this
connection, we recall (\ref{11.20})): $\fo \mathcal{B}\in
\beta_o[E]\ \ \fo z\in \mathbf{H}$
\bfn\label{11.26}(\mathbf{h}^1[\mathcal{B}]\,{\stackrel{\tau}{\Longrightarrow}}\,z)
\Longleftrightarrow \bigl(\exists \mathcal{Z}\in
(z-\mathrm{bas})[\tau]:\,\mathbf{h}^{-1}[\mathcal{Z}]\subset
(E-\mathrm{\mathbf{fi}})[\mathcal{B}]\bigl).\efn

{\bf Remark\,10.4}. Consider the proof of (\ref{11.26}). We fix
$\mathcal{B}\in \beta_o[E]$ and $z\in \mathbf{H}.$ Since
$N_\tau^o(z) \in (z-\mathrm{bas})[\tau]$ (see (\ref{2.18}) and
definitions of Section 3), by (\ref{11.20}) \bfn\label{11.27}
(\mathbf{h}^1[\mathcal{B}]\,{\stackrel{\tau}{\Longrightarrow}}\,z)
\Longrightarrow \bigl(\exists \mathcal{Z}\in
(z-\mathrm{bas})[\tau]:\,\mathbf{h}^{-1}[\mathcal{Z}]\subset
(E-\mathrm{\mathbf{fi}})[\mathcal{B}]\bigl).\efn Let the corollary
of (\ref{11.27}) is valid. Fix $\mathfrak{Z}\in
(z-\mathrm{bas})[\tau]$ with the property
\bfn\label{11.28}\mathbf{h}^{-1}[\mathfrak{Z}]\subset
(E-\mathrm{\mathbf{fi}})[\mathcal{B}].\efn Let $\mathbb{G}\in
N_\tau^o(z).$ Then, by (\ref{2.18}),  for some $\mathbb{B}\in
\mathfrak{Z},$ the inclusion $\mathbb{B}\subset \mathbb{G}$ is
valid, where $\mathbf{h}^{-1}(\mathbb{B})\in
\mathbf{h}^{-1}[\mathfrak{Z}].$ By (\ref{11.28})
$\mathbf{h}^{-1}(\mathbb{B})\in (E-\mathrm{\mathbf{fi}})[\cb]$ and
$\mathbf{h}^{-1}(\mathbb{B})\subset \mathbf{h}^{-1}(\mathbb{G}).$
From (\ref{3.1}) and (\ref{3.3}), the inclusion
$\mathbf{h}^{-1}(\mathbb{G})\in (E-\mathrm{\mathbf{fi}})[\cb]$
follows. Since the choice of $\mathbb{G}$ was arbitrary, the
inclusion
$$\mathbf{h}^{-1}[N_\tau^o(z)]\subset (E-\mathrm{\mathbf{fi}})[\cb]$$
is established. By (\ref{11.20})
$\mathbf{h}^1[\cb]\,{\stackrel{\tau}{\Rightarrow}}\,z.$ So, we
obtain that $$\bigl(\exists \mathcal{Z}\in
(z-\mathrm{bas})[\tau]:\,\mathbf{h}^{-1}[\mathcal{Z}]\subset
(E-\mathbf{fi} )[\mathcal{B}]\bigl)\Longrightarrow
(\mathbf{h}^1[\cb]\,{\stackrel{\tau}{\Rightarrow}}\,z).$$ Using the
last implication and (\ref{11.27}), we obtain the required property
(\ref{11.26}).

Using (\ref{11.14}), we obtain the obvious corollary of
(\ref{11.26}): $\fo \cu\in \mathfrak{F}_\mathbf{u}[E]\ \ \fo z\in
\mathbf{H}$ \bfn\label{11.29}(\mathbf{h}^1[\cu \cap
\ca]\,{\stackrel{\tau}{\Rightarrow}}\,z) \Leftrightarrow
\bigl(\exists \mathcal{Z}\in (z-\mathrm{bas})[\tau]:\,
\mathbf{h}^{-1}[\mathcal{Z}]\subset (E-\mathrm{\mathbf{fi}})[\cu
\cap \ca]\bigl).\efn

{\bf Remark\,10.5}. Consider the proof of (\ref{11.29}) fixing
$\cu\in \mathfrak{F}_\mathbf{u}[E]$ and $z\in \mathbf{H}.$ Then, by
(\ref{11.14}) $\cu \cap \ca \in \mathbb{F}_o^*(\ca).$ In particular
(see Section 9), $\cu \cap \ca \in \beta_o[E].$ Now, (\ref{11.29})
follows from (\ref{11.26}).

\begin{con}\label{co11.1}{\TL} $\fo z\in \mathbf{H}\ \ \exists
\mathcal{Z}\in
(z-\mathrm{bas})[\tau]:\,\mathbf{h}^{-1}[\mathcal{Z}]\subset \ca.$
\end{con}

{\bf Remark\,10.6}. It is possible to consider
Condition~\ref{co11.1} as a weakened variant of the measurability of
$\mathbf{h}.$ The usual measurability of $\mathbf{h}$ is not natural
since $\ca$ is only algebra of sets.

Until the end of the present section, we suppose that
Condition~\ref{co11.1} is valid.

\begin{propos}\label{p11.3}{\TL} If $Condition~\ref{co11.1}$ is
fulfilled, then $(\tau-\mathbb{AS})[\mathcal{E}] =
(\mathrm{\mathbf{as}})[E;\mathbf{H};\tau;\mathbf{h};\mathcal{E}]\ \
\fo \mathcal{E}\in \cp^\prime(\ca).$
\end{propos}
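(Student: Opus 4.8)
The plan is to establish the two inclusions separately. Proposition~\ref{p11.2} already gives $(\tau-\mathbb{AS})[\mathcal{E}\,|\,\ca]\subset (\mathrm{\mathbf{as}})[E;\mathbf{H};\tau;\mathbf{h};\mathcal{E}]$ for every $\mathcal{E}\in \cp^\prime(\ca)$, and this holds with no extra hypothesis; so it remains only to prove the reverse inclusion, and it is here that Condition~\ref{co11.1} is used. Fix $\mathcal{E}\in \cp^\prime(\ca)$ and let $y\in (\mathrm{\mathbf{as}})[E;\mathbf{H};\tau;\mathbf{h};\mathcal{E}]$. By Proposition~\ref{p4.2} there is an ultrafilter $\cu\in \mathfrak{F}_\mathbf{u}^o[E\,|\,\mathcal{E}]$ with $\mathbf{h}^1[\cu]\,{\stackrel{\tau}{\Longrightarrow}}\,y$. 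The candidate approximate solution of $(E,\ca)$ is the trace $\cu \cap \ca$: by (\ref{11.14}) $\cu \cap \ca\in \mathbb{F}_o^*(\ca)$, and since $\mathcal{E}\subset \ca$ and $\mathcal{E}\subset \cu$ we have $\mathcal{E}\subset \cu \cap \ca$, i.e. $\cu \cap \ca\in \mathbb{F}_o^*(\ca\,|\,\mathcal{E})$.

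The crux of the argument is to show $\mathbf{h}^1[\cu \cap \ca]\,{\stackrel{\tau}{\Longrightarrow}}\,y$. First I would apply the neighborhood characterization (\ref{11.20}) to $\mathbf{h}^1[\cu]\,{\stackrel{\tau}{\Longrightarrow}}\,y$, using $(E-\mathrm{\mathbf{fi}})[\cu]=\cu$, to record that $\mathbf{h}^{-1}(G)\in \cu$ for every $G\in N_\tau^o(y)$. Now Condition~\ref{co11.1} enters: choose $\mathcal{Z}\in (y-\mathrm{bas})[\tau]$ with $\mathbf{h}^{-1}[\mathcal{Z}]\subset \ca$. For each $Z\in \mathcal{Z}$ we have $Z\in N_\tau(y)$, so there is $G\in N_\tau^o(y)$ with $G\subset Z$; then $\mathbf{h}^{-1}(G)\subset \mathbf{h}^{-1}(Z)$, whence $\mathbf{h}^{-1}(Z)\in \cu$ by the upward closure of the filter $\cu$, while also $\mathbf{h}^{-1}(Z)\in \ca$. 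Thus $\mathbf{h}^{-1}[\mathcal{Z}]\subset \cu \cap \ca\subset (E-\mathrm{\mathbf{fi}})[\cu \cap \ca]$. Since $\cu\in \mathfrak{F}_\mathbf{u}[E]$, the equivalence (\ref{11.29}) (the instance of (\ref{11.26}) for the filter base $\cu \cap \ca$) now yields $\mathbf{h}^1[\cu \cap \ca]\,{\stackrel{\tau}{\Longrightarrow}}\,y$.

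Having produced $\cu \cap \ca\in \mathbb{F}_o^*(\ca\,|\,\mathcal{E})$ with $\mathbf{h}^1[\cu \cap \ca]\,{\stackrel{\tau}{\Longrightarrow}}\,y$, Proposition~\ref{p11.1} gives $y\in (\tau-\mathbb{AS})[\mathcal{E}\,|\,\ca]$; see (\ref{11.2}). Since $y$ was arbitrary, $(\mathrm{\mathbf{as}})[E;\mathbf{H};\tau;\mathbf{h};\mathcal{E}]\subset (\tau-\mathbb{AS})[\mathcal{E}\,|\,\ca]$, and combining with Proposition~\ref{p11.2} gives the asserted equality. The one genuinely delicate point is the second paragraph: one must pass from the fact that the $\mathbf{h}$-preimages of \emph{all} $\tau$-neighborhoods of $y$ lie in $\cu$ to a \emph{single} local base $\mathcal{Z}$ at $y$ whose $\mathbf{h}$-preimages lie in $\ca$ \emph{simultaneously} — which is exactly what Condition~\ref{co11.1} supplies. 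Without it there is no reason that the coarser filter base $\cu \cap \ca$ still carries enough sets to converge to $y$, which is why this hypothesis cannot be dropped (cf. the remark on the non-measurability of $\mathbf{h}$).
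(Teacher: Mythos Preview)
Your proof is correct and follows essentially the same route as the paper's own argument: one inclusion via Proposition~\ref{p11.2}, then for the reverse inclusion pick $\cu\in\mathfrak{F}_\mathbf{u}^o[E\,|\,\mathcal{E}]$ from Proposition~\ref{p4.2}, pass to the trace $\cu\cap\ca\in\mathbb{F}_o^*(\ca\,|\,\mathcal{E})$ via (\ref{11.14}), and use Condition~\ref{co11.1} together with (\ref{11.20}) and (\ref{11.26}) (equivalently (\ref{11.29})) to transfer the convergence $\mathbf{h}^1[\cu]\stackrel{\tau}{\Longrightarrow}y$ down to $\mathbf{h}^1[\cu\cap\ca]\stackrel{\tau}{\Longrightarrow}y$. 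The only cosmetic difference is that the paper first records the intermediate inclusion $\mathbf{h}^{-1}[N_\tau(z)]\subset\cu$ before specializing to the local base $\widetilde{\mathcal{Z}}$, whereas you argue pointwise on $Z\in\mathcal{Z}$; the substance is identical.
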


\begin{proof} Let Condition~\ref{co11.1} be fulfilled. Fix
$\mathcal{E}\in \cp^\prime(\ca)$ and $z\in
(\mathrm{\mathbf{as}})[E;\mathbf{H};\tau;\mathbf{h};\mathcal{E}].$
Then, $z\in \mathbf{H}$ and, for some $\cu\in
\mathfrak{F}_\mathbf{u}[E |\,\mathcal{E}]$
\bfn\label{11.30}\mathbf{h}^1[\cu]\,{\stackrel{\tau}{\Longrightarrow}}\,z\efn
(see Proposition~\ref{p4.2}). Then, by (\ref{11.20}) and
(\ref{11.30}) we have the inclusion
$\mathbf{h}^{-1}[N_\tau^o(z)]\subset \cu,$ since $(E-\mathbf{fi}
)[\cu] = \cu$ by (\ref{3.1}). As a corollary,
$$\mathbf{h}^{-1}[N_\tau(z)]\subset \cu$$ (indeed, for $H\in
N_\tau(z),$ we can choose $G\in N_\tau^o (z)$ such that $G\subset
H;$ therefore, by (\ref{2.1}) $\mathbf{h}^{-1}(G) \in \cu,\,
\mathbf{h}^{-1}(G)\subset \mathbf{h}^{-1}(H),$ and by (\ref{3.1})
$\mathbf{h}^{-1}(H) \in \cu).$ By Condition~\ref{co11.1} there
exists $\widetilde{\mathcal{Z}}\in (z-\mathrm{bas})[\tau]$ such that
$\mathbf{h}^{-1}[\widetilde{\mathcal{Z}}]\subset \ca.$ In addition,
\bfn\label{11.31}\mathbf{h}^{-1}[\widetilde{\mathcal{Z}}]\subset
\mathbf{h}^{-1}[N_\tau(z)]\subset \cu;\efn therefore,
$\mathbf{h}^{-1}[\widetilde{\mathcal{Z}}]\subset \cu \cap \ca,$
where by (\ref{11.14}) $\cu \cap \ca \in \mathbb{F}_o^*(\ca).$ We
recall that $\cu \cap \ca \in \beta_o[E]$ (see Section 9) and
\bfn\label{11.32}\mathbf{h}^{-1}[\widetilde{\mathcal{Z}}]\subset \cu
\cap \ca\subset (E-\mathbf{fi} )[\cu \cap \ca].\efn Of course, by
(\ref{4.1}) $\mathbf{h}^1[\cu \cap \ca]\in \beta_o[\mathbf{H}].$ In
addition, by (\ref{11.32}) $$\exists \mathcal{Z}\in
(z-\mathrm{bas})[\tau]:\,\mathbf{h}^{-1}[\mathcal{Z}]\subset
(E-\mathbf{fi} )[\cu \cap \ca].$$ By (\ref{11.26}) $\mathbf{h}^1[\cu
\cap \ca]\,{\stackrel{\tau}{\Rightarrow}}\,z.$ Recall that
$\mathcal{E}\subset \cu.$ Since $\mathcal{E}\in \cp^\prime(\ca),$ we
obtain that $\mathcal{E}\subset \cu \cap \ca.$ Therefore (see
(\ref{10.21`})), $$\cu \cap \ca \in \mathbb{F}_o^*(\ca
|\,\mathcal{E}):\, \mathbf{h}^1[\cu \cap
\ca]\,{\stackrel{\tau}{\Longrightarrow}}\,z.$$ By
Proposition~\ref{p11.1} $z\in (\tau-\mathbb{AS})[\mathcal{E}
|\,\ca].$ Since the choice of $z$ was arbitrary, we have the
inclusion
\bfn\label{11.33}(\mathrm{\mathbf{as}})[E;\mathbf{H};\tau;\mathbf{h};\mathcal{E}]
\subset (\tau-\mathbb{AS})[\mathcal{E}|\,\ca].\efn Using
(\ref{11.33}) and Proposition~\ref{p11.2}, we obtain the equality
$$\hspace{3.5cm}(\tau-\mathbb{AS})[\mathcal{E}|\,\ca] = (\mathrm{\mathbf{as}})
[E;\mathbf{H};\tau;\mathbf{h};\mathcal{E}]. \hspace{3.6cm} $$
\end{proof}

So, we can use (see Proposition~\ref{p11.1} and
Condition~\ref{co11.1}) ultrafilters of the space $(E,\ca)$ as
nonsequential approximate solutions in the case, when a nonempty
subfamily of $\ca$ is used as  the constraint of asymptotic
character. This property is very useful in the cases of spaces
$(E,\ca)$ for which the set $\mathbb{F}_o^*(\ca)$ is realized
effectively. In addition, for a semialgebra $\cl\in \Pi[E]$ with the
property $\ca = a_E^o(\cl)$ (see Section 2), we consider the passage
$$\mathbb{F}_o^*(\cl)\longrightarrow \mathbb{F}_o^*(\ca)$$ as an
unessential transformation (see \cite[$\S\,7.6$]{32} and
\cite[$\S\,2.4$]{39}; here it is appropriate to use the natural
connection of ultrafilters and finitely additive (0,1)-measures).
Then, after unessential transformations, the examples of
\cite[$\S\,7.6$]{32} can be used in our scheme sufficiently
constructively.

\section{Ultrasolutions}
\setcounter{equation}{0}

First, we recall some statements of \cite{40}. In addition, we fix a
nonempty set $E$ and a TS $(\mathbf{H},\tau),$ where $\mathbf{H}\neq
\e.$ We consider the nonempty set $\mathfrak{F}_\mathbf{u}[E].$
Suppose that $\mathbf{h}\in \mathbf{H}^E.$ Then, we suppose that
\bfn\label{12.1}(\mathbf{h}-\mathrm{LIM})[\cu |\,\tau]\,\df\,\{z\in
\mathbf{H}
|\,\mathbf{h}^1[\cu]\,{\stackrel{\tau}{\Longrightarrow}}\,z\}\ \ \fo
\cu\in \mathfrak{F}_\mathbf{u}[E].\efn So, we introduce the limit
sets corresponding to ultrafilters of $E.$ By analogy with
Proposition\,5.4 of \cite{40} the following statement is
established.

\begin{propos}\label{p12.1}{\TL} If $\tau\in (c-\mathrm{top})[\mathbf{H}],$ then
$(\mathbf{h}-\mathrm{LIM})[\cu |\,\tau]\in
\cp^\prime\Bigl(\mathrm{cl}\bigl(\mathbf{h}^1(E),\tau\bigl)\Bigl)\ \
\fo \cu \in \mathfrak{F}_\mathbf{u}[E].$\end{propos}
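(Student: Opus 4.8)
The plan is to show two things for a fixed ultrafilter $\cu \in \mathfrak{F}_\mathbf{u}[E]$ and a compact topology $\tau \in (c-\mathrm{top})[\mathbf{H}]$: first, that $(\mathbf{h}-\mathrm{LIM})[\cu\,|\,\tau]$ is nonempty, and second, that it is contained in $\mathrm{cl}\bigl(\mathbf{h}^1(E),\tau\bigl)$. The second inclusion is the routine part. If $z \in (\mathbf{h}-\mathrm{LIM})[\cu\,|\,\tau]$, then by (\ref{12.1}) and (\ref{3.5}) we have $N_\tau(z) \subset (\mathbf{H}-\mathrm{\mathbf{fi}})\bigl[\mathbf{h}^1[\cu]\bigl]$, so every neighborhood $H$ of $z$ contains some $\mathbf{h}^1(U)$ with $U \in \cu$; since $U \neq \emptyset$ (filters contain only nonempty sets), $\mathbf{h}^1(U) \neq \emptyset$, hence $H \cap \mathbf{h}^1(E) \neq \emptyset$. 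By the definition of the closure operation $\mathrm{cl}(\cdot,\tau)$ given in Section~3, this says exactly $z \in \mathrm{cl}\bigl(\mathbf{h}^1(E),\tau\bigl)$.

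For nonemptiness, the idea is to transport the ultrafilter $\cu$ to the estimate space via $\mathbf{h}$ and use compactness. By (\ref{4.2}) (with $X = E$, $Y = \mathbf{H}$, $f = \mathbf{h}$, and $\mathcal{B} = \cu$, noting $(E-\mathrm{\mathbf{fi}})[\cu] = \cu \in \mathfrak{F}_\mathbf{u}[E]$), the family $(\mathbf{H}-\mathrm{\mathbf{fi}})\bigl[\mathbf{h}^1[\cu]\bigl]$ is an ultrafilter on $\mathbf{H}$; call it $\mathcal{G}$. The collection $\bigl\{\mathrm{cl}(G,\tau) : G \in \mathcal{G}\bigr\}$ is then a centered family of nonempty closed sets in the compact TS $(\mathbf{H},\tau)$ — centeredness follows because $\mathcal{G}$ is closed under finite intersections and each $G \in \mathcal{G}$ is nonempty, so $\bigcap_{i=1}^n \mathrm{cl}(G_i,\tau) \supset \bigcap_{i=1}^n G_i \neq \emptyset$. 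Compactness of $\tau$ then gives a point $z \in \bigcap_{G \in \mathcal{G}} \mathrm{cl}(G,\tau)$. The remaining step is to check that such a $z$ lies in $(\mathbf{h}-\mathrm{LIM})[\cu\,|\,\tau]$, i.e.\ that $N_\tau(z) \subset \mathcal{G}$: given $H \in N_\tau(z)$, pick an open $G_0 \in N_\tau^o(z)$ with $G_0 \subset H$; for each $G \in \mathcal{G}$ the point $z \in \mathrm{cl}(G,\tau)$ forces $G_0 \cap G \neq \emptyset$, so $\mathbf{H} \setminus G_0$ cannot belong to $\mathcal{G}$, and since $\mathcal{G}$ is an ultrafilter this yields $G_0 \in \mathcal{G}$, hence $H \in \mathcal{G}$ by the upward-closure of filters. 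Thus $z \in (\mathbf{h}-\mathrm{LIM})[\cu\,|\,\tau]$, and combined with the first paragraph, $z \in \mathrm{cl}\bigl(\mathbf{h}^1(E),\tau\bigl)$ as well.

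Since the excerpt advertises this as an analog of Proposition~5.4 of \cite{40}, an alternative and perhaps cleaner route is simply to invoke Proposition~\ref{p4.4} (or its combination with the identifications in Section~3): with $\mathcal{B} = \cu \in \beta_o[E]$ one gets $(\mathbf{as})[E;\mathbf{H};\tau;\mathbf{h};\cu] \neq \emptyset$, and by (\ref{4.15}) this attraction set equals $\bigcap_{U \in \cu}\mathrm{cl}\bigl(\mathbf{h}^1(U),\tau\bigr)$, which one then relates to $(\mathbf{h}-\mathrm{LIM})[\cu\,|\,\tau]$ via the convergence criterion (\ref{3.5}) and the ultrafilter property; the containment in $\mathrm{cl}(\mathbf{h}^1(E),\tau)$ is immediate since $E \in \cu$. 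The main obstacle, in either approach, is the ultrafilter argument converting ``$z$ lies in the closure of every $G \in \mathcal{G}$'' into ``$N_\tau(z) \subset \mathcal{G}$'': this is where maximality of $\cu$ (equivalently, of its image $\mathcal{G}$) is genuinely used, and it is the only place the proof would fail for a non-maximal filter. Everything else is bookkeeping with the definitions of Sections~2 and~3.
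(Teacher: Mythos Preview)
Your proof is correct and follows essentially the same approach as the paper: push the ultrafilter $\cu$ forward to an ultrafilter $\mathcal{G}=(\mathbf{H}-\mathrm{\mathbf{fi}})\bigl[\mathbf{h}^1[\cu]\bigl]$ on $\mathbf{H}$ via (\ref{4.2}), use compactness of $\tau$ to obtain a limit point, and then verify the closure inclusion. The only difference is cosmetic: where you spell out the cluster-point argument (centered family of closures, then the ultrafilter dichotomy $G_0\in\mathcal{G}$ or $\mathbf{H}\setminus G_0\in\mathcal{G}$), the paper simply invokes the standard fact that every ultrafilter on a compact space converges, citing \cite[ch.\,I]{32}, and writes the closure inclusion via the filter axiom $A\cap B\neq\e$ for $A,B\in\mathcal{K}$ rather than your ``$H\supset\mathbf{h}^1(U)\neq\e$'' phrasing.
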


\begin{proof} Fix $\cu\in \mathfrak{F}_\mathbf{u}[E].$ Then
$\mathbf{h}^1[\cu]\in \beta_o[\mathbf{H}]$ and by (\ref{4.2})
\bfn\label{12.2}\mathcal{K}\,\df\,(\mathbf{H}-\mathrm{\mathbf{fi}})\bigl[\mathbf{h}^1[\cu]\bigl]\in
\mathfrak{F}_\mathbf{u}[\mathbf{H}]\efn (recall that
$(E-\mathrm{\mathbf{fi}})[\cu] = \cu).$ Since $(\mathbf{H},\tau)$ is
a compact TS, there exists $y\in \mathbf{H}$ such that
$\mathcal{K}{\stackrel{\tau}{\Rightarrow}}\,y;$ see
\cite[ch.\,I]{32}. Then, by (\ref{3.5}) $N_\tau(y) \subset
\mathcal{K}$ or \bfn\label{12.3}N_\tau(y)\subset
(\mathbf{H}-\mathrm{\mathbf{fi}})\bigl[\mathbf{h}^1[\cu]\bigl]\efn
(see (\ref{12.2})). By (\ref{3.4}) and (\ref{12.3})
$\mathbf{h}^1[\cu]{\stackrel{\tau}{\Rightarrow}}\,y.$ Then, by
(\ref{12.1}) $y\in (\mathbf{h}-\mathrm{LIM})[\cu |\,\tau].$ So,
\bfn\label{12.3`}(\mathbf{h}-\mathrm{LIM})[\cu |\,\tau] \neq \e.\efn
Let $z\in (\mathbf{h}-\mathrm{LIM})[\cu |\,\tau].$ Then $z\in
\mathbf{H}$ and
$\mathbf{h}^1[\cu]{\stackrel{\tau}{\Rightarrow}}\,z.$ By (\ref{3.4})
and (\ref{12.2}) $N_\tau(z) \subset \mathcal{K}.$ In addition, by
(\ref{12.2}) $\mathbf{h}^1[\cu]\subset \mathcal{K}.$ Then, by
(\ref{3.1})
$$A \cap B \neq \e\ \ \fo A\in N_\tau (z)\ \ \fo B\in
\mathbf{h}^1[\cu].$$ By (\ref{2.1}) we obtain that
$$\mathbf{h}^1(U) \cap H \neq \e\ \ \fo U\in \cu\ \ \fo H\in
N_\tau (z).$$ Since $\cu\neq \e$ and $\mathbf{h}^1(U) \subset
\mathbf{h}^1(E)$ for $U\in \cu,$ we have the property:
$$\mathbf{h}^1(E) \cap H \neq \e\ \ \fo H\in N_\tau(z).$$ So, $z\in
\mathrm{cl}\bigl(\mathbf{h}^1(E),\tau\bigl).$ The inclusion
$$(\mathbf{h}-\mathrm{LIM})[\cu |\,\tau]\subset
\mathrm{cl}\bigl(\mathbf{h}^1(E),\tau\bigl)$$ is established. Using
(\ref{12.3`}), we obtain that $(\mathbf{h}-\mathrm{LIM})[\cu\,
|\,\tau]
\in\cp^\prime\Bigl(\mathrm{cl}\bigl(\mathbf{h}^1(E),\tau\bigl)\Bigl).$\end{proof}

We note the following obvious property too: if $\tau\in (\mathrm{top})_o[\mathbf{H}],\,
\cf\in \mathfrak{F}[\mathbf{H}],\,y_1\in \mathbf{H},$ and $y_2\in \mathbf{H},$ then
\bfn\label{12.4}\bigl((\cf {\stackrel{\tau}{\Longrightarrow}}\,y_1)\,\&\,(\cf
{\stackrel{\tau}{\Longrightarrow}}\,y_2)\bigl) \Longrightarrow (y_1 = y_2).\efn

{\bf Remark\,11.1}. Let the premise of (\ref{12.4}) be fulfilled.
Then, by (\ref{3.5}) \bfn\label{12.5}\bigl(N_\tau(y_1)\subset
\cf\bigl)\,\&\,\bigl(N_\tau(y_2)\subset \cf\bigl).\efn Then, $y_1 =
y_2.$ Indeed, suppose the contrary: $y_1 \neq y_2.$ Then, by
(\ref{7.0}), for some $H_1\in N_\tau(y_1)$ and $H_2\in N_\tau(y_2),$
the equality $H_1 \cap H_2 = \e$ is valid. But, by (\ref{12.5}) $H_1
\in \cf$ and $H_2\in \cf.$ Then, by (\ref{3.1}) $H_1 \cap H_2 \neq
\e.$ The obtained contradiction means that $y_1 \neq y_2$ is
impossible. So, $y_1 = y_2.$

\begin{propos}\label{p12.2}{\TL} If $\tau\in (c-\mathrm{top})_o[\mathbf{H}]$ and
$\cu\in \mathfrak{F}_\mathbf{u}[E],$ then $$\exists ! \tilde{z}\in
\mathbf{H}:\,(\mathbf{h}-\mathrm{LIM})[\cu |\,\tau] = \{\tilde{z}\}.$$
\end{propos}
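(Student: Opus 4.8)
The plan is to combine the two preceding propositions. Proposition~\ref{p12.1} gives, under the assumption $\tau\in (c-\mathrm{top})[\mathbf{H}]$, that $(\mathbf{h}-\mathrm{LIM})[\cu\,|\,\tau]$ is a nonempty subset of $\mathrm{cl}\bigl(\mathbf{h}^1(E),\tau\bigl)$; in particular it is nonempty. Since here $\tau\in (c-\mathrm{top})_o[\mathbf{H}] = (c-\mathrm{top})[\mathbf{H}] \cap (\mathrm{top})_o[\mathbf{H}]$, the hypothesis of Proposition~\ref{p12.1} is satisfied, so there exists at least one $\tilde z\in \mathbf{H}$ with $\tilde z\in (\mathbf{h}-\mathrm{LIM})[\cu\,|\,\tau]$. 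This handles the existence half.

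For uniqueness, I would invoke (\ref{12.4}), the Hausdorff separation property of limits, which applies since $\tau\in (\mathrm{top})_o[\mathbf{H}]$. Fix $\cu\in \mathfrak{F}_\mathbf{u}[E]$; recall from (\ref{4.1}) that $\mathbf{h}^1[\cu]\in \beta_o[\mathbf{H}]$, and set $\mathcal{K}\,\df\,(\mathbf{H}-\mathrm{\mathbf{fi}})\bigl[\mathbf{h}^1[\cu]\bigl]\in \mathfrak{F}[\mathbf{H}]$ (by (\ref{4.2}) this is in fact an ultrafilter, though only $\mathcal{K}\in \mathfrak{F}[\mathbf{H}]$ is needed). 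By (\ref{3.3}) and (\ref{3.4}), for any $z\in \mathbf{H}$ the relation $\mathbf{h}^1[\cu]\,{\stackrel{\tau}{\Longrightarrow}}\,z$ is equivalent to $\mathcal{K}\,{\stackrel{\tau}{\Longrightarrow}}\,z$. Hence if $z_1, z_2\in (\mathbf{h}-\mathrm{LIM})[\cu\,|\,\tau]$, then $\mathcal{K}\,{\stackrel{\tau}{\Longrightarrow}}\,z_1$ and $\mathcal{K}\,{\stackrel{\tau}{\Longrightarrow}}\,z_2$, so (\ref{12.4}) yields $z_1 = z_2$.

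Combining the two parts: $(\mathbf{h}-\mathrm{LIM})[\cu\,|\,\tau]$ is nonempty and any two of its elements coincide, so it is a singleton $\{\tilde z\}$ for a unique $\tilde z\in \mathbf{H}$. I would write this as: pick $\tilde z\in (\mathbf{h}-\mathrm{LIM})[\cu\,|\,\tau]$ (possible by Proposition~\ref{p12.1}), then for any other $z$ in the set the uniqueness argument forces $z = \tilde z$, whence $(\mathbf{h}-\mathrm{LIM})[\cu\,|\,\tau] = \{\tilde z\}$; and $\tilde z$ is unique because a singleton determines its element.

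There is essentially no obstacle here — the statement is a formal corollary of Propositions~\ref{p12.1}, the separation property (\ref{12.4}), and the translation between filter-base convergence and filter convergence via (\ref{3.3})--(\ref{3.4}). The only point requiring a little care is making explicit that $\tau\in (c-\mathrm{top})_o[\mathbf{H}]$ simultaneously supplies both the compactness needed for Proposition~\ref{p12.1} (nonemptiness) and the Hausdorffness needed for (\ref{12.4}) (uniqueness); once that is noted, the proof is a two-line assembly.
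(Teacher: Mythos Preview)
Your proposal is correct and follows essentially the same approach as the paper: both obtain existence from Proposition~\ref{p12.1}, pass from the filter base $\mathbf{h}^1[\cu]$ to the associated filter $(\mathbf{H}-\mathrm{\mathbf{fi}})\bigl[\mathbf{h}^1[\cu]\bigl]$, and then apply (\ref{12.4}) to force uniqueness. The paper's write-up is nearly identical in structure, only slightly more explicit in verifying the two set inclusions that yield the singleton equality.
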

\begin{proof}
The corresponding  proof is the obvious combination of (\ref{12.1}), (\ref{12.4}), and
Proposition~\ref{p12.1}. Indeed, by Proposition~\ref{p12.1}
$(\mathbf{h}-\mathrm{LIM})[\cu |\,\tau]\neq \e$ and $(\mathbf{h}-\mathrm{LIM})[\cu
|\,\tau]\subset \mathbf{H}.$ Let $y\in (\mathbf{h}-\mathrm{LIM})[\cu |\,\tau].$ Then,
$y\in \mathbf{H}$ and \bfn\label{12.6}\mathbf{h}^1[\cu]
{\stackrel{\tau}{\Longrightarrow}}\,y.\efn Let $z\in (\mathbf{h}-\mathrm{LIM})[\cu
|\,\tau].$ Then, $z\in \mathbf{H}$ and \bfn\label{12.7}\mathbf{h}^1[\cu]
{\stackrel{\tau}{\Longrightarrow}}\,z.\efn For
$\mathcal{H}\,\df\,(\mathbf{H}-\mathrm{\mathbf{fi}})\bigl[\mathbf{h}^1[\cu]\bigl]\in
\mathfrak{F}_\mathbf{u}[\mathbf{H}]$ by (\ref{12.6}) and (\ref{12.7})
$$\bigl(N_\tau(y)\subset \mathcal{H}\bigl)\,\&\,\bigl(N_\tau(z)\subset
\mathcal{H}\bigl).$$ So, by (\ref{3.5})
$\mathcal{H}{\stackrel{\tau}{\Rightarrow}}\,y$ and
$\mathcal{H}{\stackrel{\tau}{\Rightarrow}}\,z.$ From (\ref{12.4})
the equality $y = z$ is valid. Then, $z\in \{y\}.$ The inclusion
\bfn\label{12.8}(\mathbf{h}-\mathrm{LIM})[\cu |\,\tau]\subset
\{y\}\efn is established. But, by the choice of $y$ we have the
inclusion $\{y\}\subset (\mathbf{h}-\mathrm{LIM})[\cu |\,\tau].$
Using (\ref{12.8}), we obtain that
$$(\mathbf{h}-\mathrm{LIM})[\cu |\,\tau] = \{y\}.$$ The uniqueness of $y$ is obvious.
\end{proof}

From Proposition~\ref{p12.2} the natural corollary follows: if
$\tau\in (c-\mathrm{top})_o[\mathbf{H}],$ then
\bfn\label{12.9}\exists ! g\in
\mathbf{H}^{\mathfrak{F}_\mathbf{u}[E]}:\,
(\mathbf{h}-\mathrm{LIM})[\cu |\,\tau] = \{g(\cu)\}\ \ \fo \cu\in
\mathfrak{F}_\mathbf{u}[E].\efn In the following, we postulate that
\bfn\label{12.10}\tau\in (c-\mathrm{top})_o[\mathbf{H}].\efn  Then
(see (\ref{12.9}) and (\ref{12.10})), we suppose  that
\bfn\label{12.11}\mathfrak{H}[\tau]\in
\mathbf{H}^{\mathfrak{F}_\mathbf{u}[E]}\efn is defined by the
following rule: if $\cu\in \mathfrak{F}_\mathbf{u}[E],$ then
$\mathfrak{H}[\tau](\cu)\in \mathbf{H}$ has the property:
\bfn\label{12.12}(\mathbf{h}-\mathrm{LIM})[\cu |\,\tau] =
\{\mathfrak{H}[\tau](\cu)\}.\efn We note that by (\ref{12.12}) and
Proposition~\ref{p12.1} \bfn\label{12.13}\mathfrak{H}[\tau](\cu)\in
\mathrm{cl}\bigl(\mathbf{h}^1(E),\tau\bigl)\ \ \fo \cu\in
\mathfrak{F}_\mathbf{u}[E]\efn So, by (\ref{12.11}) and
(\ref{12.13}) $\mathfrak{H}[\tau]:\,\mathfrak{F}_\mathbf{u}[E]
\rightarrow \mathrm{cl}\bigl(\mathbf{h}^1(E),\tau\bigl).$ In this
connection, we note the following typical situation: under condition
(\ref{12.10}), $\mathbf{h}^1(E) \neq \mathbf{H}$ and
$\mathbf{h}^1(E) \notin (\tau-\mathrm{comp})[\mathbf{H}].$ Of
course, by (\ref{12.10})
$$\mathrm{cl}\bigl(\mathbf{h}^1(E),\tau\bigl)\in
(\tau-\mathrm{comp})[\mathbf{H}].$$ Indeed, any closed set in a
compact TS is compact too. Recall that
\bfn\label{12.14}\mathfrak{H}[\tau] \in
\mathrm{cl}\bigl(\mathbf{h}^1(E),\tau\bigl)^{\mathfrak{F}_\mathbf{u}[E]}.\efn
Returning to (\ref{12.1}) and (\ref{12.12}) we note that
\bfn\label{12.15}\mathbf{h}^1[\cu]\,{\stackrel{\tau}{\Longrightarrow}}\,\mathfrak{H}[\tau](\cu)\
\ \fo \cu\in \mathfrak{F}_\mathbf{u}[E].\efn With the employment of
(\ref{3.8}), we introduce the natural immersion of $E$ in
$\mathfrak{F}_\mathbf{u}[E]$ supposing that
\bfn\label{12.16}(E-\mathrm{ult})[\cdot]\,\df\,\bigl((E-\mathrm{ult})[x]\bigl)_{x\in
E}\in \mathfrak{F}_\mathbf{u}[E]^E.\efn In connection with
(\ref{12.16}), we note the following obvious
equality:\bfn\label{12.17}\mathbf{h} = \mathfrak{H}[\tau] \circ
(E-\mathrm{ult})[\cdot].\efn

{\bf  Remark\,11.2}. Consider the proof of (\ref{12.17}). Fix $x\in
E.$ By (\ref{3.8}) we obtain that \bfn\label{12.18}\mathbf{h}(x) \in
\mathbf{h}^1(S)\ \ \fo S\in (E-\mathrm{ult})[x].\efn So, by
(\ref{2.1}) and (\ref{12.18}) we obtain the following
property:\bfn\label{12.19}\mathbf{h}(x)\in T\ \ \fo T\in
\mathbf{h}^1\bigl[(E-\mathrm{ult})[x]\bigl].\efn In addition, by
(\ref{3.4}), (\ref{3.8}), and (\ref{12.15})
$$N_\tau\Bigl(\mathfrak{H}[\tau]\bigl((E-\mathrm{ult})[x]\bigl)\Bigl)\subset
(\mathbb{H}-\mathrm{\mathbf{fi}})\Bigl[\mathbf{h}^1\bigl[(E-\mathrm{ult})[x]\bigl]\Bigl].$$
So, $\fo \Gamma\in
N_\tau\Bigl(\mathfrak{H}[\tau]\bigl((E-\mathrm{ult})[x]\bigl)\Bigl)\
\ \exists T\in \mathbf{h}^1\bigl[(E-\mathrm{ult})[x]\bigl]:\,
T\subset \Gamma.$ Then, by (\ref{12.19}) $\mathbf{h}(x) \in \Gamma\
\ \fo \Gamma \in
N_\tau\Bigl(\mathfrak{H}[\tau]\bigl((E-\mathrm{ult})[x]\bigl)\Bigl).$
Using the separability of $\tau$ (\ref{12.10}), we obtain the
equality chain $$\mathbf{h}(x) =
\mathfrak{H}[\tau]\bigl((E-\mathrm{ult})[x]\bigl) =
\bigl(\mathfrak{H}[\tau] \circ (E-\mathrm{ult})[\cdot]\bigl)(x).$$
Since the choice of $x$ was arbitrary, we obtain that (\ref{12.17})
is fulfilled.

\begin{propos}\label{p12.3}{\TL} If $\cu\in \mathfrak{F}_\mathbf{u}[E],$ then the following
equality is valid:
$$\bigcap\limits_{A\in\,\cu}\mathrm{cl}\bigl(\mathbf{h}^1(A),\tau\bigl) =
\{\mathfrak{H}[\tau](\cu)\}.$$
\end{propos}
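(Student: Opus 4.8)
The plan is to show set equality by two inclusions, relying on Proposition~\ref{p12.2} (which guarantees $(\mathbf{h}-\mathrm{LIM})[\cu\,|\,\tau] = \{\mathfrak{H}[\tau](\cu)\}$ is a singleton), on the convergence statement (\ref{12.15}), and on (\ref{4.15}) / the closure characterization of attraction sets. First I would fix $\cu\in \mathfrak{F}_\mathbf{u}[E]$ and abbreviate $z_*\df \mathfrak{H}[\tau](\cu)$. The key observation is that $\cu\in\beta_o[E]$ (since $\mathfrak{F}[\mathbb{E}]\subset\beta_o[\mathbb{E}]$), so by (\ref{4.1}) $\mathbf{h}^1[\cu]\in\beta_o[\mathbf{H}]$, and by (\ref{3.4}) together with (\ref{12.15}) we have $N_\tau(z_*)\subset (\mathbf{H}-\mathrm{\mathbf{fi}})\bigl[\mathbf{h}^1[\cu]\bigl]$. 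Since $\cu$ is an ultrafilter of $\cp(E)$, note $\cu = (E-\mathrm{\mathbf{fi}})[\cu]$, so $\cu$ is exactly its own generated filter; thus the attraction-set formula (\ref{4.15}) applied with $\mathcal{B}=\cu$ gives $(\mathrm{\mathbf{as}})[E;\mathbf{H};\tau;\mathbf{h};\cu] = \bigcap_{A\in\cu}\mathrm{cl}\bigl(\mathbf{h}^1(A),\tau\bigr)$, which is precisely the left-hand side I want to compute. By Proposition~\ref{p4.1} (or~\ref{p4.2}), this set equals $\{y\in\mathbf{H}\,|\,\exists\mathcal{F}\in\mathfrak{F}_o[E\,|\,\cu]:\,\mathbf{h}^1[\mathcal{F}]\stackrel{\tau}{\Longrightarrow}y\}$.

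For the inclusion $\{z_*\}\subset\bigcap_{A\in\cu}\mathrm{cl}\bigl(\mathbf{h}^1(A),\tau\bigr)$: take any $A\in\cu$ and any $H\in N_\tau(z_*)$. Since $N_\tau(z_*)\subset (\mathbf{H}-\mathrm{\mathbf{fi}})[\mathbf{h}^1[\cu]]$, there is $B\in\cu$ with $\mathbf{h}^1(B)\subset H$; then $A\cap B\in\cu$ and $A\cap B\neq\emptyset$ (filters contain no empty set), so picking $x\in A\cap B$ gives $\mathbf{h}(x)\in\mathbf{h}^1(A)\cap\mathbf{h}^1(B)\subset\mathbf{h}^1(A)\cap H$, hence $\mathbf{h}^1(A)\cap H\neq\emptyset$. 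As $H$ was arbitrary, $z_*\in\mathrm{cl}\bigl(\mathbf{h}^1(A),\tau\bigr)$, and since $A$ was arbitrary the inclusion follows.

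For the reverse inclusion, I would argue that any $y$ in $\bigcap_{A\in\cu}\mathrm{cl}\bigl(\mathbf{h}^1(A),\tau\bigr) = (\mathrm{\mathbf{as}})[E;\mathbf{H};\tau;\mathbf{h};\cu]$ belongs to $(\mathbf{h}-\mathrm{LIM})[\cu\,|\,\tau]$, i.e. actually $\mathbf{h}^1[\cu]\stackrel{\tau}{\Longrightarrow}y$. By Proposition~\ref{p4.1} there is a filter $\mathcal{F}\supset\cu$ with $\mathbf{h}^1[\mathcal{F}]\stackrel{\tau}{\Longrightarrow}y$; but $\cu$ is an \emph{ultra}filter, so $\cu\subset\mathcal{F}$ forces $\mathcal{F}=\cu$, whence $\mathbf{h}^1[\cu]\stackrel{\tau}{\Longrightarrow}y$ and $y\in(\mathbf{h}-\mathrm{LIM})[\cu\,|\,\tau]=\{z_*\}$. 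The main (and essentially only) delicate point is ensuring the representation (\ref{4.15}) is applicable: one must record that $\cu$ coincides with $\{\cap\}_\mathbf{f}(\cu)$ and lies in $\beta[E]$ so that (\ref{4.14})--(\ref{4.16}) apply, or alternatively bypass (\ref{4.15}) entirely and run the ``filter $\supset\cu$ must equal $\cu$'' maximality argument directly off Proposition~\ref{p4.1}; I would favour the latter, shorter route, using Proposition~\ref{p12.2} only to identify the resulting singleton as $\{\mathfrak{H}[\tau](\cu)\}$. Combining the two inclusions yields $\bigcap_{A\in\cu}\mathrm{cl}\bigl(\mathbf{h}^1(A),\tau\bigr)=\{\mathfrak{H}[\tau](\cu)\}$.
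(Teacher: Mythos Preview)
Your argument is correct. The inclusion $\{z_*\}\subset\bigcap_{A\in\cu}\mathrm{cl}\bigl(\mathbf{h}^1(A),\tau\bigr)$ is handled exactly as in the paper: pick $A\in\cu$, $H\in N_\tau(z_*)$, find $B\in\cu$ with $\mathbf{h}^1(B)\subset H$, and use $A\cap B\neq\emptyset$.

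The reverse inclusion is where you genuinely diverge from the paper. The paper argues directly in the \emph{codomain}: for $q$ in the intersection, one shows every $W\in N_\tau(q)$ meets every element of $\mathbf{h}^1[\cu]$; since by (\ref{4.2}) the generated filter $(\mathbf{H}-\mathrm{\mathbf{fi}})\bigl[\mathbf{h}^1[\cu]\bigr]$ is an ultrafilter on $\mathbf{H}$, the ``meets every element'' property forces $W$ itself into that ultrafilter, giving $N_\tau(q)\subset(\mathbf{H}-\mathrm{\mathbf{fi}})\bigl[\mathbf{h}^1[\cu]\bigr]$ and hence $\mathbf{h}^1[\cu]\stackrel{\tau}{\Longrightarrow}q$; then (\ref{12.4}) yields $q=u$. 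You instead work in the \emph{domain}: identify the intersection as $(\mathrm{\mathbf{as}})[E;\mathbf{H};\tau;\mathbf{h};\cu]$ via (\ref{4.15}), invoke Proposition~\ref{p4.1} to produce a filter $\mathcal{F}\supset\cu$ with $\mathbf{h}^1[\mathcal{F}]\stackrel{\tau}{\Longrightarrow}y$, and then use maximality of $\cu$ in $\mathfrak{F}[E]$ to conclude $\mathcal{F}=\cu$. Both routes are sound; yours is shorter because it reuses the attraction-set machinery already assembled in Section~4, while the paper's is more self-contained and isolates precisely which ultrafilter property (on $\mathbf{H}$, via (\ref{4.2})) drives the conclusion.
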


\begin{proof} Let $u\,\df\,\mathfrak{H}[\tau](\cu).$ Then $u\in \mathbf{H}$ and by
(\ref{12.15}) $$\mathbf{h}^1[\cu]\,{\stackrel{\tau}{\Longrightarrow}}\,u.$$ Then, by
(\ref{3.4}) $N_\tau(u) \subset
(\mathbf{H}-\mathrm{\mathbf{fi}})\bigl[\mathbf{h}^1[\cu]\bigl].$ Therefore, for any $T\in
N_\tau(u),$ there exists $U\in \cu$ such that $\mathbf{h}^1(U) \subset T$ (see
(\ref{3.3}) and (\ref{4.1})).

Let $A_*\in \cu.$ Then, $\mathbf{h}^1(A_*)\in \cp(\mathbf{H}).$ If
$S\in N_\tau(u),$ then, for some $U_S\in \cu,$ the inclusion
$\mathbf{h}^1(U_S)\subset S$ is valid; moreover, $A_* \cap U_S \neq
\e$ and \bfn\label{12.19``}\mathbf{h}^1(A_* \cap U_S) \subset
\mathbf{h}^1(A_*) \cap \mathbf{h}^1(U_S) \subset \mathbf{h}^1(A_*)
\cap S,\efn where $\mathbf{h}^1(A_* \cap U_S)\neq \e.$ So,
$\mathbf{h}^1(A_*) \cap S \neq \e.$ Since the choice of $S$ was
arbitrary, we obtain that $$\mathbf{h}^1(A_*) \cap H \neq \e\ \ \fo
H\in N_\tau(u).$$ Therefore, $u\in
\mathrm{cl}\bigl(\mathbf{h}^1(A_*),\tau\bigl).$ Since the choice of
$A_*$ was arbitrary too, we have the inclusion $u\in
\bigcap\limits_{A\in
\cu}\mathrm{cl}\bigl(\mathbf{h}^1(A),\tau\bigl).$ Therefore,
$$\{u\}\subset \bigcap_{A\in
\cu}\mathrm{cl}\bigl(\mathbf{h}^1(A),\tau\bigl).$$ Choose arbitrary
$q \in \bigcap\limits_{A\in
\cu}\mathrm{cl}\bigl(\mathbf{h}^1(A),\tau\bigl).$ Then $q \in
\mathbf{H}$ and \bfn\label{12.20}\widetilde{W} \cap \mathbf{h}^1(A)
\neq \e\ \ \fo A\in \cu\ \ \fo\widetilde{W}\in N_\tau(q).\efn Then,
for $W\in N_\tau(q),$ we obtain the property $W \cap T \neq \e\ \
\fo T\in \mathbf{h}^1[\cu].$ Using (\ref{3.3}), we have the
following statement: $$W \cap M \neq \e\ \ \fo M\in
(\mathbf{H}-\mathrm{\mathbf{fi}})\bigl[\mathbf{h}^1[\cu]\bigl].$$
Therefore, $W\in
\bigl((\mathbf{H}-\mathrm{\mathbf{fi}})\bigl[\mathbf{h}^1[\cu]\bigl]-\mathrm{set}\bigl)[\mathbf{H}]$
(see Section 5), where
\bfn\label{12.21}(\mathbf{H}-\mathrm{\mathbf{fi}})\bigl[\mathbf{h}^1[\cu]\bigl]\in
\mathfrak{F}_\mathbf{u}[\mathbf{H}]\efn (we use (\ref{4.2})). In
addition, $\cp(\mathbf{H})\in \pi[\mathbf{H}].$ Using (\ref{7.5}),
(\ref{12.21}), and statements of Section 5, we obtain that
$$\bigl((\mathbf{H}-\mathrm{\mathbf{fi}})\bigl[\mathbf{h}^1[\cu]\bigl]-\mathrm{set}\bigl)
[\mathbf{H}] =
\bigl((\mathbf{H}-\mathrm{\mathbf{fi}})\bigl[\mathbf{h}^1[\cu]\bigl]-\mathrm{set}\bigl)
[\mathbf{H}] \cap \cp(\mathbf{H}) =
(\mathbf{H}-\mathrm{\mathbf{fi}})\bigl[\mathbf{h}^1[\cu]\bigl].$$ Therefore, $W \in
(\mathbf{H}-\mathrm{\mathbf{fi}})\bigl[\mathbf{h}^1[\cu]\bigl].$ Since the choice of $W$
was arbitrary, we obtain that $$N_\tau(q) \subset
(\mathbf{H}-\mathrm{\mathbf{fi}})\bigl[\mathbf{h}^1[\cu]\bigl].$$ So, by (\ref{3.4})
$\mathbf{h}^1[\cu]\,{\stackrel{\tau}{\Rightarrow}}\,q.$ Then, we have the following
properties:
$$(\mathbf{h}^1[\cu]\,{\stackrel{\tau}{\Longrightarrow}}\,u)\,\&\,
(\mathbf{h}^1[\cu]\,{\stackrel{\tau}{\Longrightarrow}}\,q).$$ By
(\ref{12.4}) $u = q.$ Then $q\in \{u\}.$ Since the choice of $q$ was
arbitrary, we obtain that
$$\bigcap\limits_{A\in\,\cu}\mathrm{cl}\bigl(\mathbf{h}^1(A),\tau\bigl)\subset \{u\}.$$
The opposite inclusion was established previously. Therefore,
$\{u\}$ and the intersection of all sets
$\mathrm{cl}\bigl(\mathbf{h}^1(A),\tau\bigl),\, A\in \cu,$
coincide.\end{proof}

From (\ref{4.15}) and Proposition~\ref{p12.3} we obtain that
\bfn\label{12.22}(\mathrm{\mathbf{as}})[E;\mathbf{H};\tau;\mathbf{h};\cu]
= \{\mathfrak{H}[\tau](\cu)\}\ \ \fo \cu\in
\mathfrak{F}_\mathbf{u}[E].\efn So, ultrafilters of $E$ realize very
perfect constraints of asymptotic character.

\section{Ultrafilters of measurable space with algebra of sets}
\setcounter{equation}{0}

In this section, we fix a nonempty set $E,$ TS
$(\mathbf{H},\tau),\,\mathbf{H}\neq \e,$ and $\mathbf{h}\in
\mathbf{H}^E.$ Moreover, we fix $\ca\in (\mathrm{alg})[E].$ Finally,
we suppose that Condition~\ref{co11.1} is fullfiled. Then, we have
the statement of Proposition~\ref{p11.3} and other statements of
Section 10. We suppose that (\ref{12.10}) is valid also. So, we have
the mapping (\ref{12.11}). In addition, we have the natural
uniqueness of the filter limit: (\ref{12.4}) is fulfilled. Now, we
supplement (\ref{12.4}). Namely, $\fo \cb\in \beta_o[\mathbf{H}]\ \
\fo y_1\in \mathbf{H}\ \ \fo y_2\in \mathbf{H}$
\bfn\label{13.1}\bigl((\mathcal{B}\,{\stackrel{\tau}{\Longrightarrow}}\,y_1
)\,\&\,(\mathcal{B}\,{\stackrel{\tau}{\Longrightarrow}}\,y_2)\bigl)
\Longrightarrow (y_1 = y_2).\efn

{\bf Remark\,12.1}. For the proof of (\ref{13.1}), we fix $\cb\in
\beta_o[\mathbf{H}],\,y_1\in \mathbf{H},$ and $y_2\in \mathbf{H}.$
Let the premise statement of (\ref{13.1}) is valid: $\cb$ converges
to $y_1$ and $y_2.$ Then, for
$\cf\,\df\,(\mathbf{H}-\mathrm{\mathbf{fi}})[\cb]\in
\mathfrak{F}_\mathbf{u}[\mathbf{H}]$ (see (\ref{3.3})), the
inclusions $$\bigl(N_\tau(y_1)\subset
\cf\bigl)\,\&\,\bigl(N_\tau(y_2)\subset \cf\bigl)$$ are fulfilled.
Therefore, by (\ref{3.5}) the following two properties are valid:
$$(\cf\,{\stackrel{\tau}{\Longrightarrow}}\,y_1)\,\&\,(\cf\,
{\stackrel{\tau}{\Longrightarrow}}\,y_2).$$ By (\ref{12.4}) $y_1 =
y_2.$ So, (\ref{13.1}) is established.

We recall (\ref{11.22}) and (\ref{11.23}): if $\ \cf\in
\mathbb{F}^*(\ca),$ then $\ \cf\in \beta_o[E]$ and
$\mathbf{h}^1[\cf]\in \beta_o[\mathbf{H}]$ (see (\ref{4.1})). We use
(\ref{11.14}).

\begin{propos}\label{p13.1}{\TL} $\mathbf{h}^1[\cu \cap
\ca]\,{\stackrel{\tau}{\Longrightarrow}}\,\mathfrak{H}[\tau](\cu)\ \
\fo \cu\in \mathfrak{F}_\mathbf{u}[E].$ \end{propos}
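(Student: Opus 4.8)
The statement to prove is: for every $\cu\in\mathfrak{F}_\mathbf{u}[E]$, the image base $\mathbf{h}^1[\cu\cap\ca]$ converges (in $\tau$) to $\mathfrak{H}[\tau](\cu)$. The natural route is to reduce this to statements already established: (\ref{12.15}), which gives $\mathbf{h}^1[\cu]\,{\stackrel{\tau}{\Longrightarrow}}\,\mathfrak{H}[\tau](\cu)$; the equivalences (\ref{11.20}) and (\ref{11.26}), which recast such a convergence in terms of a local base $\mathcal{Z}\in(z-\mathrm{bas})[\tau]$ with $\mathbf{h}^{-1}[\mathcal{Z}]\subset(E-\mathbf{fi})[\cdot]$; Condition~\ref{co11.1}, which supplies a local base whose $\mathbf{h}$-preimages land in $\ca$; and (\ref{11.14}), which guarantees $\cu\cap\ca\in\mathbb{F}_o^*(\ca)$, in particular $\cu\cap\ca\in\beta_o[E]$ so that $\mathbf{h}^1[\cu\cap\ca]\in\beta_o[\mathbf{H}]$ by (\ref{4.1}).

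\textbf{Key steps.} First I would fix $\cu\in\mathfrak{F}_\mathbf{u}[E]$ and set $z\df\mathfrak{H}[\tau](\cu)$, recalling from (\ref{12.15}) that $\mathbf{h}^1[\cu]\,{\stackrel{\tau}{\Longrightarrow}}\,z$. By (\ref{11.20}) (applied with $\mathcal{B}=\cu$, using $(E-\mathbf{fi})[\cu]=\cu$ from (\ref{3.1})) this gives the inclusion $\mathbf{h}^{-1}[N_\tau^o(z)]\subset\cu$. Next, invoke Condition~\ref{co11.1} to obtain a set $\widetilde{\mathcal{Z}}\in(z-\mathrm{bas})[\tau]$ with $\mathbf{h}^{-1}[\widetilde{\mathcal{Z}}]\subset\ca$. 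Since $\widetilde{\mathcal{Z}}\subset N_\tau(z)$ and, for each $Z\in\widetilde{\mathcal{Z}}$, one may pick $G\in N_\tau^o(z)$ with $G\subset Z$, whence $\mathbf{h}^{-1}(G)\subset\mathbf{h}^{-1}(Z)$ and $\mathbf{h}^{-1}(G)\in\cu$ forces $\mathbf{h}^{-1}(Z)\in\cu$ by (\ref{3.1}), we deduce $\mathbf{h}^{-1}[\widetilde{\mathcal{Z}}]\subset\cu$. Combining this with $\mathbf{h}^{-1}[\widetilde{\mathcal{Z}}]\subset\ca$ gives $\mathbf{h}^{-1}[\widetilde{\mathcal{Z}}]\subset\cu\cap\ca$. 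Now (\ref{11.14}) yields $\cu\cap\ca\in\mathbb{F}_o^*(\ca)$, hence $\cu\cap\ca\in\beta_o[E]$ (Section 9) and $(E-\mathbf{fi})[\cu\cap\ca]\supset\cu\cap\ca$, so $\mathbf{h}^{-1}[\widetilde{\mathcal{Z}}]\subset(E-\mathbf{fi})[\cu\cap\ca]$. Finally, apply the equivalence (\ref{11.26}) with $\mathcal{B}=\cu\cap\ca$: the existence of $\mathcal{Z}=\widetilde{\mathcal{Z}}\in(z-\mathrm{bas})[\tau]$ with $\mathbf{h}^{-1}[\mathcal{Z}]\subset(E-\mathbf{fi})[\cu\cap\ca]$ is precisely equivalent to $\mathbf{h}^1[\cu\cap\ca]\,{\stackrel{\tau}{\Longrightarrow}}\,z$, which is the claim. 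Since $\cu$ was arbitrary, the proof is complete.

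\textbf{Main obstacle.} There is no serious obstacle here; the statement is essentially a bookkeeping consequence of the machinery already set up in Sections 10 and 11, in close parallel to the proof of Proposition~\ref{p11.3}. The only point requiring a little care is the passage $\mathbf{h}^{-1}[\widetilde{\mathcal{Z}}]\subset\cu\Rightarrow\mathbf{h}^{-1}[\widetilde{\mathcal{Z}}]\subset\cu\cap\ca$, which must explicitly use Condition~\ref{co11.1}'s guarantee that these preimages lie in $\ca$; without that hypothesis the statement can fail. One should also be mindful to verify that $\widetilde{\mathcal{Z}}$ supplied by Condition~\ref{co11.1} can simultaneously serve as the witness local base in (\ref{11.26}) — but this is immediate since Condition~\ref{co11.1} already delivers $\widetilde{\mathcal{Z}}\in(z-\mathrm{bas})[\tau]$, exactly the class quantified over in (\ref{11.26}). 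Thus the argument is a direct chain of the cited equivalences, and I would write it up in four or five short paragraphs mirroring the style of Proposition~\ref{p11.3}.
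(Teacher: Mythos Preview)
Your proposal is correct and follows essentially the same approach as the paper's own proof: fix $\cu$, set $z=\mathfrak{H}[\tau](\cu)$, use (\ref{12.15}) and (\ref{11.20}) to get $\mathbf{h}^{-1}[N_\tau^o(z)]\subset\cu$, invoke Condition~\ref{co11.1} to produce a local base $\mathcal{Z}$ with $\mathbf{h}^{-1}[\mathcal{Z}]\subset\ca$, conclude $\mathbf{h}^{-1}[\mathcal{Z}]\subset\cu\cap\ca\subset(E-\mathrm{\mathbf{fi}})[\cu\cap\ca]$, and finish via (\ref{11.26}). The only cosmetic difference is that the paper first passes through the inclusion $\mathbf{h}^{-1}[N_\tau(z)]\subset\cu$ before specializing to $\mathcal{Z}\subset N_\tau(z)$, whereas you argue the inclusion $\mathbf{h}^{-1}[\widetilde{\mathcal{Z}}]\subset\cu$ directly for each $Z\in\widetilde{\mathcal{Z}}$; the two are equivalent.
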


\begin{proof} Let $\cu\in \mathfrak{F}_\mathbf{u}[E]$ and $z\,\df\,
\mathfrak{H}[\tau](\cu).$ Then $z\in \mathbf{H}$ and by
Condition~\ref{co11.1}, for some $\mathcal{Z}\in
(z-\mathrm{bas})[\tau]$, the inclusion
$\mathbf{h}^{-1}[\mathcal{Z}]\subset \ca$ is fulfilled. In addition,
by (\ref{12.15})
$\mathbf{h}^1[\cu]\,{\stackrel{\tau}{\Rightarrow}}\,z.$ Since
$\cu\in \beta_o[E]$ and $\cu = (E-\mathrm{\mathbf{fi}})[\cu],$ then
\bfn\label{13.2}\mathbf{h}^{-1}[N_\tau^o(z)]\subset \cu.\efn From
(\ref{13.2}) the inclusion $\mathbf{h}^{-1}[N_\tau(z)]\subset \cu$
follows (namely, for any $S\in \mathbf{h}^{-1}[N_\tau(z)],$ there
exists $T\in \mathbf{h}^{-1}[N_\tau^o(z)]$ such that $T\subset S;$
then, $T\in \cu$ by (\ref{13.2}) and $S\in \cu$ by axioms of a
filter). In addition, $\mathcal{Z}\subset N_\tau(z).$ Then,
$$\mathbf{h}^{-1}[\mathcal{Z}]\subset
\mathbf{h}^{-1}[N_\tau(z)]\subset \cu$$ and (by the choice of $\
\mathcal{Z})$ $\ \mathbf{h}^{-1}[\mathcal{Z}]\subset \cu \cap \ca
\subset (E-\mathrm{\mathbf{fi}})[\cu \cap \ca].$ \hspace{0.5cm}Since
$\mathcal{Z}\in (z-\mathrm{bas})[\tau],$ by (\ref{11.26})
$$\mathbf{h}^1[\cu \cap \ca]\,{\stackrel{\tau}{\Longrightarrow}}\,z.$$ By definition of
$z\ \ \mathbf{h}^1[\cu \cap
\ca]\,{\stackrel{\tau}{\Longrightarrow}}\,\mathfrak{H}[\tau](\cu).$
\end{proof}

\begin{propos}\label{p13.2}{\TL} $\fo \cf\in \mathbb{F}_o^*(\ca)\ \ \exists ! z\in
\mathbf{H}:\,\mathbf{h}^1[\cf]\,{\stackrel{\tau}{\Longrightarrow}}\,z.$
\end{propos}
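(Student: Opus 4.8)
The plan is to deduce the statement from Proposition~\ref{p13.1} together with the surjectivity of the restriction map $\cu \mapsto \cu \cap \ca$, using for the uniqueness part the Hausdorff separation already recorded in (\ref{13.1}).

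For existence, I would fix $\cf \in \mathbb{F}_o^*(\ca)$ and invoke (\ref{7.7}) (equivalently, (\ref{11.17})) to obtain an ultrafilter $\cu \in \mathfrak{F}_\mathbf{u}[E]$ with $\cf = \cu \cap \ca$. Since Condition~\ref{co11.1} and (\ref{12.10}) are the standing assumptions of this section, Proposition~\ref{p13.1} applies to $\cu$ and yields
$$\mathbf{h}^1[\cu \cap \ca]\,{\stackrel{\tau}{\Longrightarrow}}\,\mathfrak{H}[\tau](\cu),$$
that is, $\mathbf{h}^1[\cf]\,{\stackrel{\tau}{\Longrightarrow}}\,\mathfrak{H}[\tau](\cu)$. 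Thus $z \df \mathfrak{H}[\tau](\cu) \in \mathbf{H}$ is a point with the required convergence, so the set $\{z\in\mathbf{H}\,|\,\mathbf{h}^1[\cf]\,{\stackrel{\tau}{\Longrightarrow}}\,z\}$ is nonempty.

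For uniqueness, I would recall from the definitions of Section~5 that $\mathbb{F}^*(\ca) \subset \beta_o[E]$; in particular $\cf \in \beta_o[E]$, hence $\mathbf{h}^1[\cf] \in \beta_o[\mathbf{H}]$ by (\ref{4.1}). If $z_1 \in \mathbf{H}$ and $z_2 \in \mathbf{H}$ both satisfy $\mathbf{h}^1[\cf]\,{\stackrel{\tau}{\Longrightarrow}}\,z_j$ for $j=1,2$, then applying (\ref{13.1}) with $\mathcal{B} = \mathbf{h}^1[\cf]$ gives $z_1 = z_2$. Combined with the existence step, this establishes $\exists!\,z\in\mathbf{H}:\,\mathbf{h}^1[\cf]\,{\stackrel{\tau}{\Longrightarrow}}\,z$, as asserted.

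As for where the difficulty sits: the proposition is essentially a repackaging and I do not expect a genuine obstacle. All the substantive work has already been carried out in Proposition~\ref{p13.1} (which identifies the limit as the value of $\mathfrak{H}[\tau]$ on any $E$-ultrafilter whose $\ca$-trace equals $\cf$) and in the surjectivity statement (\ref{7.7})/(\ref{11.17}). The only point demanding a little attention is to note that $\cf$, being a filter of the measurable space $(E,\ca)$, is a genuine filter base on $E$, so that (\ref{4.1}) and the Hausdorff uniqueness (\ref{13.1}) legitimately apply to its image $\mathbf{h}^1[\cf]$.
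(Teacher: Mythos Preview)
Your proof is correct and follows essentially the same route as the paper: lift $\cf$ to an $E$-ultrafilter via (\ref{11.17})/(\ref{7.7}), apply Proposition~\ref{p13.1} for existence of the limit $\mathfrak{H}[\tau](\cu)$, and use (\ref{13.1}) together with $\mathbf{h}^1[\cf]\in\beta_o[\mathbf{H}]$ for uniqueness. The paper's argument is identical in structure and content.
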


\begin{proof} Fix $\cf\in \mathbb{F}_o^*(\ca).$ Using (\ref{11.17}), we choose
$\cu\in\mathfrak{F}_ \mathbf{u}[E]$ such that \bfn\label{13.3}\cf = \cu \cap \ca.\efn
Then, by (\ref{12.11}) $\mathfrak{H}[\tau](\cu)\in \mathbf{H}$ and by (\ref{13.3}) and
Proposition~\ref{p13.1}
\bfn\label{13.4}\mathbf{h}^1[\cf]\,{\stackrel{\tau}{\Longrightarrow}}\,
\mathfrak{H}[\tau](\cu).\efn In addition, $\cf\in \beta_o[E]$ and $\mathbf{h}^1[\cf]\in
\beta_o[\mathbf{H}].$ Therefore, by (\ref{13.1}) and (\ref{13.4}) $\fo y\in \mathbf{H}$
$$\hspace{3.6cm}(\mathbf{h}^1[\cf]\,{\stackrel{\tau}{\Longrightarrow}}\,y)
\Longrightarrow \bigl(y = \mathfrak{H}[\tau](\cu)\bigl).
\hspace{3.7cm}$$\end{proof}

From Proposition~\ref{p13.2}, the obvious corollary follows; namely
$\exists ! g\in~\mathbf{H}^{\mathbb{F}_o^*(\ca)}:$
$$\mathbf{h}^1[\cf]\,{\stackrel{\tau}{\Longrightarrow}}\,g(\cf)\ \ \fo \cf\in
\mathbb{F}_o^*(\ca).$$  Now, we suppose  that the mapping
\bfn\label{13.5}\mathfrak{H}_\ca[\tau]:\,\mathbb{F}_o^*(\ca)
\longrightarrow \mathbf{H}\efn is defined by the following rule: if
$\cf\in \mathbb{F}_o^*(\ca),$ then
\bfn\label{13.6}\mathbf{h}^1[\cf]\,{\stackrel{\tau}{\Longrightarrow}}\,
\mathfrak{H}_\ca[\tau](\cf).\efn From (\ref{11.14}) and
(\ref{13.5}), the obvious property follows; namely,
$\mathfrak{H}_\ca[\tau](\cu~\cap~\ca)\in \mathbf{H}\ \ \fo \cu\in
\mathfrak{F}_\mathbf{u}[E].$

\begin{propos}\label{p13.3}{\TL} $\mathfrak{H}_\ca[\tau](\cu \cap \ca) =
\mathfrak{H}[\tau](\cu)\ \ \fo \cu\in \mathfrak{F}_\mathbf{u}[E].$
\end{propos}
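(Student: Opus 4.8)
The plan is to show that both $\mathfrak{H}_\ca[\tau](\cu\cap\ca)$ and $\mathfrak{H}[\tau](\cu)$ are characterized as the unique point of $\mathbf{H}$ to which a certain filter base $\tau$-converges, and then invoke the uniqueness of such a limit (statement (\ref{13.1}), valid since (\ref{12.10}) gives $\tau\in(c-\mathrm{top})_o[\mathbf{H}]\subset(\mathrm{top})_o[\mathbf{H}]$). First I would fix an arbitrary $\cu\in\mathfrak{F}_\mathbf{u}[E]$ and set $z\,\df\,\mathfrak{H}[\tau](\cu)\in\mathbf{H}$. By (\ref{11.14}) we have $\cu\cap\ca\in\mathbb{F}_o^*(\ca)$, so by the defining property (\ref{13.6}) of $\mathfrak{H}_\ca[\tau]$ the convergence $\mathbf{h}^1[\cu\cap\ca]\,{\stackrel{\tau}{\Longrightarrow}}\,\mathfrak{H}_\ca[\tau](\cu\cap\ca)$ holds. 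On the other hand, Proposition~\ref{p13.1} gives precisely $\mathbf{h}^1[\cu\cap\ca]\,{\stackrel{\tau}{\Longrightarrow}}\,z=\mathfrak{H}[\tau](\cu)$.

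Next I would record that $\cu\cap\ca\in\beta_o[E]$ (this follows from $\mathbb{F}_o^*(\ca)\subset\beta_\ca^o[E]\subset\beta_o[E]$ as noted in Section~5), hence by (\ref{4.1}) the image $\mathbf{h}^1[\cu\cap\ca]\in\beta_o[\mathbf{H}]$ is a filter base on $\mathbf{H}$. Thus I have a single filter base $\mathcal{B}\,\df\,\mathbf{h}^1[\cu\cap\ca]\in\beta_o[\mathbf{H}]$ with $\mathcal{B}\,{\stackrel{\tau}{\Longrightarrow}}\,\mathfrak{H}_\ca[\tau](\cu\cap\ca)$ and $\mathcal{B}\,{\stackrel{\tau}{\Longrightarrow}}\,\mathfrak{H}[\tau](\cu)$. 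Applying (\ref{13.1}) to $\mathcal{B}$, $y_1=\mathfrak{H}_\ca[\tau](\cu\cap\ca)$, and $y_2=\mathfrak{H}[\tau](\cu)$ yields $\mathfrak{H}_\ca[\tau](\cu\cap\ca)=\mathfrak{H}[\tau](\cu)$. Since $\cu$ was arbitrary, the claimed identity of mappings follows.

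There is no real obstacle here: everything needed has been assembled in the preceding results, and the proof is a two-line assembly of Proposition~\ref{p13.1}, the definition (\ref{13.6}), and the Hausdorff uniqueness (\ref{13.1}). The only point requiring the slightest care is making sure we are entitled to treat $\mathbf{h}^1[\cu\cap\ca]$ as an element of $\beta_o[\mathbf{H}]$ so that (\ref{13.1}) applies; this is immediate from (\ref{4.1}) together with $\cu\cap\ca\in\beta_o[E]$. Accordingly I expect the written proof to be essentially: fix $\cu\in\mathfrak{F}_\mathbf{u}[E]$; by (\ref{11.14}) $\cu\cap\ca\in\mathbb{F}_o^*(\ca)$, so by (\ref{13.6}) $\mathbf{h}^1[\cu\cap\ca]\,{\stackrel{\tau}{\Longrightarrow}}\,\mathfrak{H}_\ca[\tau](\cu\cap\ca)$; by Proposition~\ref{p13.1} $\mathbf{h}^1[\cu\cap\ca]\,{\stackrel{\tau}{\Longrightarrow}}\,\mathfrak{H}[\tau](\cu)$; since $\mathbf{h}^1[\cu\cap\ca]\in\beta_o[\mathbf{H}]$, (\ref{13.1}) forces the two limits to coincide.
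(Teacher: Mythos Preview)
Your proposal is correct and matches the paper's own proof essentially line for line: fix $\cu$, note $\cu\cap\ca\in\mathbb{F}_o^*(\ca)$ via (\ref{11.14}) and hence $\mathbf{h}^1[\cu\cap\ca]\in\beta_o[\mathbf{H}]$ via (\ref{4.1}), invoke Proposition~\ref{p13.1} and (\ref{13.6}) to get two $\tau$-limits of the same filter base, and conclude by the Hausdorff uniqueness (\ref{13.1}). The paper additionally name-drops Proposition~\ref{p13.2} in the final line, but that is only the result from which (\ref{13.6}) was extracted, so there is no substantive difference.
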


\begin{proof} Fix $\cu\in \mathfrak{F}_\mathbf{u}[E].$  Then, by
(\ref{12.11}) $\mathfrak{H}[\tau](\cu)\in \mathbf{H}.$ By
(\ref{11.14}) we obtain that $\cu \cap \ca \in \mathbb{F}_o^*(\ca).$
In particular, $\cu \cap \ca\in \beta_o[E]$ and by (\ref{4.1})
$\mathbf{h}^1[\cu \cap \ca]\in \beta_o[\mathbf{H}].$ From
Proposition~\ref{p13.1}, we have the following convergence
\bfn\label{13.7}\mathbf{h}^1[\cu \cap
\ca]\,{\stackrel{\tau}{\Longrightarrow}}\,\mathfrak{H}[\tau](\cu).\efn
Using Proposition~\ref{p13.2}, (\ref{13.1}), (\ref{13.6}), and
(\ref{13.7}), we obtain that $ \mathfrak{H}_\ca[\tau](\cu~ \cap~
\ca) = \mathfrak{H}[\tau](\cu).$ \end{proof}

\begin{propos}\label{p13.4}{\TL} If $\cu\in \mathbb{F}_o^*(\ca)$ and $U\in \cu,$ then
 $\mathfrak{H}_\ca[\tau](\cu)\in \mathrm{cl}\bigl(\mathbf{h}^1(U),\tau\bigl).$
 \end{propos}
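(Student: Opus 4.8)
The plan is to combine the defining convergence $\mathbf{h}^1[\cu]\,{\stackrel{\tau}{\Longrightarrow}}\,\mathfrak{H}_\ca[\tau](\cu)$ from (\ref{13.6}) with the closure characterization via neighborhoods, exactly in the spirit of the closing argument in the proof of Proposition~\ref{p12.3}. Fix $\cu\in \mathbb{F}_o^*(\ca)$ and $U\in \cu$, and write $z\,\df\,\mathfrak{H}_\ca[\tau](\cu)\in \mathbf{H}$. By (\ref{13.6}) and (\ref{3.4}) we have $N_\tau(z)\subset (\mathbf{H}-\mathrm{\mathbf{fi}})\bigl[\mathbf{h}^1[\cu]\bigl]$; recall $\cu\in \mathbb{F}^*(\ca)\subset \beta_o[E]$ and $\mathbf{h}^1[\cu]\in \beta_o[\mathbf{H}]$ by (\ref{4.1}), so (\ref{3.3}) applies. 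First I would show that $\mathbf{h}^1(U)\cap H\neq \e$ for every $H\in N_\tau(z)$.

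The key step is the following: given $H\in N_\tau(z)$, pick $G\in N_\tau^o(z)$ with $G\subset H$; then by the inclusion $N_\tau(z)\subset (\mathbf{H}-\mathrm{\mathbf{fi}})\bigl[\mathbf{h}^1[\cu]\bigl]$ and (\ref{3.3}) there is some $B\in \mathbf{h}^1[\cu]$, say $B=\mathbf{h}^1(U')$ with $U'\in \cu$, such that $\mathbf{h}^1(U')\subset G\subset H$. Now $\cu$ is a filter of $\ca$, so $U\cap U'\in \cu$ and in particular $U\cap U'\neq \e$ (since $\e\notin \cu$). Then, exactly as in (\ref{12.19``}),
$$\mathbf{h}^1(U\cap U')\subset \mathbf{h}^1(U)\cap \mathbf{h}^1(U')\subset \mathbf{h}^1(U)\cap H,$$
and $\mathbf{h}^1(U\cap U')\neq \e$ because $U\cap U'\neq \e$. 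Hence $\mathbf{h}^1(U)\cap H\neq \e$. Since $H\in N_\tau(z)$ was arbitrary, the definition of $\mathrm{cl}(\cdot,\tau)$ from Section~3 gives $z\in \mathrm{cl}\bigl(\mathbf{h}^1(U),\tau\bigl)$, i.e. $\mathfrak{H}_\ca[\tau](\cu)\in \mathrm{cl}\bigl(\mathbf{h}^1(U),\tau\bigl)$, as required.

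I do not expect any real obstacle here: the statement is a routine consequence of (\ref{13.6}) together with the elementary facts that the image operation commutes with inclusion and distributes over (i.e. is monotone and sub-multiplicative for) intersection, plus the filter axiom $\e\notin \cu$. The only point worth a line of care is using a filter element $U'$ (rather than a mere base element) when unwinding $(\mathbf{H}-\mathrm{\mathbf{fi}})\bigl[\mathbf{h}^1[\cu]\bigl]$; this is legitimate since $\mathbf{h}^1[\cu]$ consists precisely of sets $\mathbf{h}^1(F)$ with $F\in \cu$, by (\ref{2.1}). One could alternatively shorten the argument by invoking (\ref{4.15}) together with Proposition~\ref{p13.2} to identify $\mathfrak{H}_\ca[\tau](\cu)$ with the single point of $\bigcap_{A\in \cu}\mathrm{cl}\bigl(\mathbf{h}^1(A),\tau\bigl)$ along the lines of (\ref{12.22}), but the direct neighborhood argument above is cleaner and self-contained.
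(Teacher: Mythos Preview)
Your proof is correct. The argument is a direct neighborhood computation: from the defining convergence (\ref{13.6}) you extract, for each $H\in N_\tau(z)$, a set $U'\in\cu$ with $\mathbf{h}^1(U')\subset H$, and then the filter axioms for $\cu\in\mathbb{F}^*(\ca)$ give $U\cap U'\in\cu\setminus\{\e\}$, whence $\mathbf{h}^1(U)\cap H\supset\mathbf{h}^1(U\cap U')\neq\e$. This is indeed the same mechanism as in (\ref{12.19``}) from the proof of Proposition~\ref{p12.3}; note, however, that it is the \emph{opening} part of that proof (the inclusion $\{u\}\subset\bigcap_{A\in\cu}\mathrm{cl}(\mathbf{h}^1(A),\tau)$), not the closing one.

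The paper takes a different route: it lifts $\cu$ to an ultrafilter $\mathcal{V}\in\mathfrak{F}_\mathbf{u}[E]$ with $\cu=\mathcal{V}\cap\ca$ via (\ref{11.17}), applies Proposition~\ref{p12.3} to $\mathcal{V}$ (using $U\in\cu\subset\mathcal{V}$) to get $\mathfrak{H}[\tau](\mathcal{V})\in\mathrm{cl}(\mathbf{h}^1(U),\tau)$, and then invokes Proposition~\ref{p13.3} to identify $\mathfrak{H}_\ca[\tau](\cu)=\mathfrak{H}[\tau](\mathcal{V})$. Your approach is more self-contained and avoids the detour through ultrafilters of the full power set; the paper's approach has the virtue of simply reusing already-proved results. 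Both are equally valid, and your direct argument is arguably cleaner since it shows the statement really only needs the convergence (\ref{13.6}) and the filter axioms, not the maximality of $\cu$ or the lifting machinery.
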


\begin{proof} Using (\ref{11.17}), we choose $\mathcal{V}\in \mathfrak{F}_\mathbf{u}[E]$
such that $\cu = \mathcal{V}\cap \ca.$ Then, by Proposition~\ref{p12.3} we have the
inclusion \bfn\label{13.8}\mathfrak{H}[\tau](\mathcal{V})\in
\mathrm{cl}\bigl(\mathbf{h}^1(U),\tau\bigl)\efn (we use the obvious inclusion $U\in
\mathcal{V}$ realized by the choice of $U).$ By Proposition~\ref{p13.3}
$$\mathfrak{H}_\ca[\tau](\cu) = \mathfrak{H}_\ca[\tau](\mathcal{V} \cap \ca) =
\mathfrak{H}[\tau](\mathcal{V}).$$ From (\ref{13.8}), the inclusion
$\mathfrak{H}_\ca[\tau](\cu) \in
\mathrm{cl}\bigl(\mathbf{h}^1(U),\tau\bigl)$ follows.\end{proof}

We note that (see \cite{34, 35, 36}) by (\ref{12.10}) the space
$(\mathbf{H},\tau)$ is regular: if $x\in \mathbf{H},$ then
\bfn\label{13.9}\exists \mathfrak{X}\in
(x-\mathrm{bas})[\tau]:\,\mathfrak{X}\subset
\mathbf{C}_\mathbf{H}(\tau).\efn

\begin{propos}\label{p13.5}{\TL} The mapping $(\ref{13.5})$ is continuous:
\bfn\label{13.9`}\mathfrak{H}_\ca[\tau]\in
C\bigl(\mathbb{F}_o^*(\ca),\mathbf{T}_\ca^*[E],\mathbf{H},\tau\bigl).\efn
\end{propos}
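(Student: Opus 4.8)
The plan is to verify the claimed continuity straight from definition (\ref{4.20}): it suffices to show that $\mathbb{V}\df\mathfrak{H}_\ca[\tau]^{-1}(G)\in\mathbf{T}_\ca^*[E]$ for every $G\in\tau$. By (\ref{7.3}) (together with Proposition~\ref{p10.2}) this amounts to producing, for each $\cf\in\mathbb{V}$, a set $L\in\cf$ with $\Phi_\ca(L)\subset\mathbb{V}$; then $\mathbb{V}$ is a union of such basic open sets $\Phi_\ca(L)$ and hence lies in $\mathbf{T}_\ca^*[E]$.

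So I would fix $\cf\in\mathbb{V}$ and put $z\df\mathfrak{H}_\ca[\tau](\cf)\in G$. First I would squeeze a $\tau$-closed neighbourhood of $z$ inside $G$: since $\tau\in(c-\mathrm{top})_o[\mathbf{H}]$ (see (\ref{12.10})), the space $(\mathbf{H},\tau)$ is regular, so (\ref{13.9}) supplies a local base $\mathfrak{X}$ at $z$ consisting of $\tau$-closed sets, and as $G\in N_\tau(z)$ we may pick a $\tau$-closed $F\in\mathfrak{X}$ with $F\subset G$; note $F\in N_\tau(z)$. Next I would apply Condition~\ref{co11.1}: there is $\mathcal{Z}\in(z-\mathrm{bas})[\tau]$ with $\mathbf{h}^{-1}[\mathcal{Z}]\subset\ca$, and since $\mathcal{Z}$ is a local base at $z$ with $F\in N_\tau(z)$, we may pick $Z\in\mathcal{Z}$ with $Z\subset F$. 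Put $L\df\mathbf{h}^{-1}(Z)$. Then $L\in\ca$ and $\mathbf{h}^1(L)=\mathbf{h}^1\bigl(\mathbf{h}^{-1}(Z)\bigr)\subset Z\subset F$.

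It remains to check that $L\in\cf$ and $\Phi_\ca(L)\subset\mathbb{V}$. For the first, (\ref{13.6}) gives $\mathbf{h}^1[\cf]\,{\stackrel{\tau}{\Longrightarrow}}\,z$, hence by (\ref{11.20}) $\mathbf{h}^{-1}[N_\tau^o(z)]\subset(E-\mathrm{\mathbf{fi}})[\cf]$; since $(E-\mathrm{\mathbf{fi}})[\cf]\in\mathfrak{F}[E]$ is upward closed and every neighbourhood of $z$ contains an open one, this upgrades to $\mathbf{h}^{-1}[N_\tau(z)]\subset(E-\mathrm{\mathbf{fi}})[\cf]$, so $L=\mathbf{h}^{-1}(Z)\in(E-\mathrm{\mathbf{fi}})[\cf]\cap\ca=\cf$ by (\ref{11.22}); thus $\cf\in\Phi_\ca(L)$ by (\ref{5.2}). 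For the second, let $\mathcal{V}\in\Phi_\ca(L)$ be arbitrary; then $\mathcal{V}\in\mathbb{F}_o^*(\ca)$ and $L\in\mathcal{V}$, so Proposition~\ref{p13.4} yields $\mathfrak{H}_\ca[\tau](\mathcal{V})\in\mathrm{cl}\bigl(\mathbf{h}^1(L),\tau\bigr)\subset\mathrm{cl}(F,\tau)=F\subset G$, i.e. $\mathcal{V}\in\mathbb{V}$. Hence $\Phi_\ca(L)\subset\mathbb{V}$, as required.

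The one step needing care is the interplay of the two local bases at $z$: regularity furnishes the $\tau$-closed $F$ (so that in the last step $\mathrm{cl}(\mathbf{h}^1(L),\tau)$ stays inside $G$), while Condition~\ref{co11.1} furnishes the base $\mathcal{Z}$ whose $\mathbf{h}$-preimages lie in $\ca$ (so that $L$ is a legitimate ``radius'', i.e. $L\in\ca$); nesting $Z\subset F\subset G$ is what reconciles them. Beyond this, the argument only recombines ingredients already present in the proofs of Proposition~\ref{p11.3} and Proposition~\ref{p13.4}, so I expect no further obstacle.
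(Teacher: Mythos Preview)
Your proof is correct and follows essentially the same route as the paper's: use regularity of $(\mathbf{H},\tau)$ to insert a closed neighbourhood $F$, find a set $L\in\cf$ with $\mathbf{h}^1(L)\subset F$, and then invoke Proposition~\ref{p13.4} on the basic neighbourhood $\Phi_\ca(L)$. The only cosmetic difference is that the paper obtains its $\mathbf{U}\in\cu$ directly from the convergence $\mathbf{h}^1[\cu]\,{\stackrel{\tau}{\Longrightarrow}}\,z$ (since $\cu\subset\ca$ automatically, no appeal to Condition~\ref{co11.1} is needed at this step), whereas you route through Condition~\ref{co11.1} to manufacture $L=\mathbf{h}^{-1}(Z)\in\ca$ and then use (\ref{11.22}) to place it in $\cf$; both work, but the paper's shortcut is slightly cleaner.
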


\begin{proof} Fix $\cu\in \mathbb{F}_o^*(\ca).$ Then, by (\ref{13.5})
$z\,\df\,\mathfrak{H}_\ca[\tau](\cu)\in \mathbf{H}.$ In addition, by
(\ref{13.6})
\bfn\label{13.10}\mathbf{h}^1[\cu]\,{\stackrel{\tau}{\Longrightarrow}}\,z.\efn
Of course, $\cu\in \beta_o[E]$ and $\mathbf{h}^1[\cu]\in
\beta_o[\mathbf{H}]$ (see (\ref{4.1})). As a corollary, by
(\ref{3.3})
\bfn\label{13.11}\mathcal{H}\,\df\,(\mathbf{H}-\mathrm{\mathbf{fi}})\bigl[\mathbf{h}^1[\cu]\bigl]\in
\mathfrak{F}[\mathbf{H}].\efn From (\ref{3.4}), (\ref{13.10}), and
(\ref{13.11}), we obtain the following inclusion:
\bfn\label{13.12}N_\tau(z) \subset \mathcal{H}.\efn From
(\ref{2.1}), (\ref{3.3}), and (\ref{13.12}), we obtain that
\bfn\label{13.13}\fo S\in N_\tau(z)\ \ \exists U\in
\cu:\,\mathbf{h}^1(U)\subset S.\efn Fix $\mathbf{N}\in N_\tau(z).$
Using (\ref{13.9}), we choose $\mathcal{Z}\in (z-\mathrm{bas})[\tau]
$ such that $\mathcal{Z}\subset \mathbf{C}_\mathbf{H}(\tau).$ Then,
by (\ref{2.18}), for some $\mathbf{F}\in \mathcal{Z},$ the inclusion
\bfn\label{13.14}\mathbf{F}\subset \mathbf{N}\efn is valid.
Therefore, $\mathbf{F}\in \mathbf{C}_\mathbf{H}(\tau).$ Of course,
$\mathbf{F}\in N_\tau(z).$ Therefore, by (\ref{13.13}), for some
$\mathbf{U}\in \cu$ \bfn\label{13.15}\mathbf{h}^1(\mathbf{U})\subset
\mathbf{F}.\efn In addition, $\Phi_\ca(\mathbf{U})\in
(\mathbb{UF})[E;\ca]$ (see (\ref{5.3})). By (\ref{7.3})
$\Phi_\ca(\mathbf{U})\in \mathbf{T}_\ca^*[E].$ In addition, by
(\ref{5.2}) $\cu\in \Phi_\ca(\mathbf{U}).$ Therefore,
\bfn\label{13.16}\Phi_\ca(\mathbf{U})\in
N_{\mathbf{T}_\ca^*[E]}^o(\cu).\efn Choose arbitrary ultrafilter
$\mathcal{V}\in \Phi_\ca(\mathbf{U}).$ Then, $\mathcal{V}\in
\mathbb{F}_o^*(\ca)$ and $\mathbf{U}\in \mathcal{V};$ see
(\ref{5.2}). By Proposition~\ref{p13.4}
\bfn\label{13.17}\mathfrak{H}_\ca[\tau](\mathcal{V})\in
\mathrm{cl}\bigl(\mathbf{h}^1(\mathbf{U}),\tau\bigl).\efn  By the
closedness of $\mathbf{F}$ and (\ref{13.15})
$\mathrm{cl}\bigl(\mathbf{h}^1(\mathbf{U}),\tau\bigl)\subset
\mathbf{F}.$ So, from (\ref{13.17}), we have the inclusion
$$\mathfrak{H}_\ca[\tau](\mathcal{V})\in \mathbf{F}.$$ Using
(\ref{13.14}), we obtain that
$\mathfrak{H}_\ca[\tau](\mathcal{V})\in \mathbf{N}.$ Since the
choice of $\mathcal{V}$ was arbitrary, the inclusion
\bfn\label{13.18}\mathfrak{H}_\ca[\tau]^1\bigl(\Phi_\ca(\mathbf{U})\bigl)
\subset \mathbf{N}\efn is established. Since the choice of
$\mathbf{N}$ was arbitrary too, from (\ref{13.16}), we obtain that
$$\fo S\in N_\tau\bigl(\mathfrak{H}_\ca[\tau](\cu)\bigl)\ \ \exists
T\in N_{\mathbf{T}_\ca^*[E]}(\cu):\,\mathfrak{H}_\ca[\tau]^1(T)
\subset S.$$ So, the mapping $\mathfrak{H}_\ca[\tau]$ is continuous
at the point $\cu.$ Since the choice of $\cu$ was arbitrary, the
required inclusion (\ref{13.9`}) is established (see
\cite[(2.5.4)]{39}) \end{proof}

In connection with Proposition~\ref{p13.5}, we recall
Proposition~\ref{p10.2} and known statement about the possibility of
an extension of continuous  functions defined on the initial space;
in this connection, see, for example, Theorem 3.6.21 of monograph
\cite{36}. For this approach, constructions of Section 8 are
essential. Of course, under corresponding conditions, we can use the
natural connection with the Wallman extension (see (\ref{9.25}) and
Proposition~\ref{p10.2}).

In this case, Proposition~\ref{p13.5} can be ``replaced'' (in some
sense) by statements similar to the above-mentioned Theorem 3.6.21
of \cite{36} (of course, this approach requires a correction, since
we consider ultrafilters of the measurable space). But, we use the
``more straight'' way with point of view of asymptotic analysis: we
construct the required continuous mapping by the limit passage (see
Proposition~\ref{p13.5}). We recall (\ref{10.20}). Then, by
(\ref{10.20}) and (\ref{13.5}) the mapping
\bfn\label{13.19}\mathfrak{H}_\ca[\tau] \circ (\ca-\mathrm{ult})[E]
= \Bigl(\mathfrak{H}_\ca[\tau] \bigl((E-\mathrm{ult})[x] \cap
\ca\bigl)\Bigl)_{x\in E}\in \mathbf{H}^E\efn is defined; moreover,
$\mathbf{h}\in \mathbf{H}^E.$

\begin{propos}\label{p13.6}{\TL} The equality $\mathbf{h}= \mathfrak{H}_\ca[\tau] \circ
(\ca-\mathrm{ult})[E]$ is valid.
\end{propos}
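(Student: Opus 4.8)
The plan is to fix an arbitrary $x\in E$ and show that $\mathbf{h}(x)=\bigl(\mathfrak{H}_\ca[\tau]\circ(\ca-\mathrm{ult})[E]\bigr)(x)=\mathfrak{H}_\ca[\tau]\bigl((E-\mathrm{ult})[x]\cap\ca\bigr)$. First I would recall from (\ref{10.17}) that $(E-\mathrm{ult})[x]\cap\ca\in\mathbb{F}_o^*(\ca)$, so the right-hand side is a well-defined element of $\mathbf{H}$, and by (\ref{13.6}) it is characterized as the unique point $z$ with $\mathbf{h}^1\bigl[(E-\mathrm{ult})[x]\cap\ca\bigr]\,{\stackrel{\tau}{\Longrightarrow}}\,z$. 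The strategy is therefore to verify that $z=\mathbf{h}(x)$ satisfies this convergence and then invoke uniqueness (Proposition~\ref{p13.2}, or equivalently (\ref{13.1}) together with the fact that $(E-\mathrm{ult})[x]\cap\ca\in\beta_o[E]$ and hence $\mathbf{h}^1\bigl[(E-\mathrm{ult})[x]\cap\ca\bigr]\in\beta_o[\mathbf{H}]$ by (\ref{4.1})).

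The key computation mirrors Remark~11.2 (the proof of (\ref{12.17})). For the convergence $\mathbf{h}^1\bigl[(E-\mathrm{ult})[x]\cap\ca\bigr]\,{\stackrel{\tau}{\Longrightarrow}}\,\mathbf{h}(x)$, by (\ref{3.4}) it suffices to show $N_\tau(\mathbf{h}(x))\subset(\mathbf{H}-\mathrm{\mathbf{fi}})\bigl[\mathbf{h}^1[(E-\mathrm{ult})[x]\cap\ca]\bigr]$. Here I would use Condition~\ref{co11.1}: there is $\mathcal{Z}\in(\mathbf{h}(x)-\mathrm{bas})[\tau]$ with $\mathbf{h}^{-1}[\mathcal{Z}]\subset\ca$. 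For each $Z\in\mathcal{Z}$ we have $\mathbf{h}(x)\in Z$, hence $x\in\mathbf{h}^{-1}(Z)$, so $\mathbf{h}^{-1}(Z)\in(E-\mathrm{ult})[x]$ by (\ref{3.8}); since also $\mathbf{h}^{-1}(Z)\in\ca$, we get $\mathbf{h}^{-1}(Z)\in(E-\mathrm{ult})[x]\cap\ca$, and therefore $\mathbf{h}^1\bigl(\mathbf{h}^{-1}(Z)\bigr)\in\mathbf{h}^1\bigl[(E-\mathrm{ult})[x]\cap\ca\bigr]$ with $\mathbf{h}^1\bigl(\mathbf{h}^{-1}(Z)\bigr)\subset Z$. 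This shows $\mathcal{Z}\subset(\mathbf{H}-\mathrm{\mathbf{fi}})\bigl[\mathbf{h}^1[(E-\mathrm{ult})[x]\cap\ca]\bigr]$, and since $\mathcal{Z}$ is a local base at $\mathbf{h}(x)$, by (\ref{11.24}) (applied with $\mathcal{B}=\mathbf{h}^1[(E-\mathrm{ult})[x]\cap\ca]$) we conclude the desired convergence.

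Once this convergence is established, I would combine it with the defining property (\ref{13.6}) of $\mathfrak{H}_\ca[\tau]$ applied to the ultrafilter $(E-\mathrm{ult})[x]\cap\ca$: both $\mathbf{h}(x)$ and $\mathfrak{H}_\ca[\tau]\bigl((E-\mathrm{ult})[x]\cap\ca\bigr)$ are limits of $\mathbf{h}^1\bigl[(E-\mathrm{ult})[x]\cap\ca\bigr]$, which is a filter base on $\mathbf{H}$, so by (\ref{13.1}) (the uniqueness of limits of filter bases, valid since $\tau\in(c-\mathrm{top})_o[\mathbf{H}]$ by (\ref{12.10})) they coincide. As $x\in E$ was arbitrary, this yields $\mathbf{h}=\mathfrak{H}_\ca[\tau]\circ(\ca-\mathrm{ult})[E]$. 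Alternatively, one can shortcut the argument by combining Proposition~\ref{p13.3} with the already-proven identity (\ref{12.17}): $\mathfrak{H}_\ca[\tau]\bigl((E-\mathrm{ult})[x]\cap\ca\bigr)=\mathfrak{H}[\tau]\bigl((E-\mathrm{ult})[x]\bigr)=\bigl(\mathfrak{H}[\tau]\circ(E-\mathrm{ult})[\cdot]\bigr)(x)=\mathbf{h}(x)$, which is in fact the cleanest route and avoids reproving the convergence.

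I do not expect any serious obstacle here; the statement is essentially a bookkeeping consequence of Propositions~\ref{p13.3} and the earlier identity (\ref{12.17}), or directly of Condition~\ref{co11.1} together with (\ref{11.24}) and uniqueness of filter limits. The one point requiring a little care is making sure that the object $(E-\mathrm{ult})[x]\cap\ca$ really lies in $\mathbb{F}_o^*(\ca)$ so that $\mathfrak{H}_\ca[\tau]$ is defined on it — but this is exactly (\ref{10.17}) — and that the image $\mathbf{h}^1\bigl[(E-\mathrm{ult})[x]\cap\ca\bigr]$ is a filter base, which is (\ref{4.1}) applied to the inclusion $\mathbb{F}_o^*(\ca)\subset\beta_o[E]$.
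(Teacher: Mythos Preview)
Your proposal is correct, and the alternative shortcut you mention at the end --- combining Proposition~\ref{p13.3} with (\ref{12.17}) to get $\mathfrak{H}_\ca[\tau]\bigl((E-\mathrm{ult})[x]\cap\ca\bigr)=\mathfrak{H}[\tau]\bigl((E-\mathrm{ult})[x]\bigr)=\mathbf{h}(x)$ --- is exactly the route the paper takes. Your primary direct argument via Condition~\ref{co11.1} and (\ref{11.24}) is also valid and self-contained, but as you yourself note, the shortcut is cleaner since Proposition~\ref{p13.3} and (\ref{12.17}) have already absorbed the work.
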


\begin{proof} Fix $x\in E.$ Then by (\ref{12.16})
$(E-\mathrm{ult})[x]\in \mathfrak{F}_\mathbf{u}[E].$ In addition, by
(\ref{12.17}) the obvious equality follows:
\bfn\label{13.20}\mathbf{h}(x) =
\mathfrak{H}[\tau]\bigl((E-\mathrm{ult})[x]\bigl).\efn Moreover, by
(\ref{10.20}) we obtain that
\bfn\label{13.20`}(\ca-\mathrm{ult})[E](x) = (E-\mathrm{ult})[x]
\cap \ca \in \mathbb{F}_o^*(\ca).\efn Then, by
Proposition~\ref{p13.3} and (\ref{13.20`}) we have the equality
chain
$$\mathfrak{H}_\ca[\tau]\bigl((\ca-\mathrm{ult})[E](x)\bigl) =
\mathfrak{H}[\tau]\bigl((E-\mathrm{ult})[x]\bigl) = \mathbf{h}(x).$$
So, $\bigl(\mathfrak{H}_\ca[\tau] \circ
(\ca-\mathrm{ult}[E]\bigl)(x) =
\mathfrak{H}_\ca[\tau]\bigl((\ca-\mathrm{ult})[E](x)\bigl) =
\mathbf{h}(x)$. Since the choice of $x$ was arbitrary, $\mathbf{h}=
\mathfrak{H}_\ca[\tau] \circ (\ca-\mathrm{ult})[E].$
\end{proof}

Since (\ref{10.7}) is valid, from Propositions~\ref{p13.5} and
\ref{p13.6}, we have the important corollary connected with
Proposition 5.2.1 of \cite{32}:
\bfn\label{13.21}\bigl(\mathbb{F}_o^*(\ca),\mathbf{T}_\ca^*[E],
(\ca-\mathrm{ult})[E],\mathfrak{H}_\ca[\tau]\bigl)\efn is a
compactificator, for which (in the considered case)
\begin{equation}\label{13.22}\begin{array}{c}(\mathbf{as})
[E;\mathbf{H};\tau;\mathbf{h};\mathcal{E}] =
\mathfrak{H}_\ca[\tau]^1\bigl((\mathrm{\mathbf{as}})
[E;\mathbb{F}_o^*(\ca);\mathbf{T}_\ca^*[E];\\(\ca-\mathrm{ult})[E];\mathcal{E}]\bigl)
\ \ \fo \mathcal{E}\in
\cp^\prime\bigl(\cp(E)\bigl);\end{array}\end{equation} in
(\ref{13.22}) we use Proposition~3.1 and Corollary~3.1 of \cite{41}.
In addition, $\cp^\prime(\ca) \subset \cp^\prime\bigl(\cp(E)\bigl).$
Therefore, by (\ref{13.22})
\begin{equation}\label{13.23}\begin{array}{c}(\mathbf{as})
[E;\mathbf{H};\tau;\mathbf{h};\mathcal{E}] =
\mathfrak{H}_\ca[\tau]^1\bigl((\mathrm{\mathbf{as}})
[E;\mathbb{F}_o^*(\ca);\mathbf{T}_\ca^*[E];\\(\ca-\mathrm{ult})[E];\mathcal{E}]\bigl)
\ \ \fo \mathcal{E}\in \cp^\prime(\ca).\end{array}\end{equation} In
(\ref{13.23}), we have the important particular case. We consider
this case in the following section.

\section{Ultrafilters as generalized solutions}
\setcounter{equation}{0}

We suppose that $E,\,(\mathbf{H},\tau),\,\mathbf{h},$ and $\ca$
satisfy  to the conditions of Section 12. We postulate
(\ref{12.10}). Finally, we postulate Condition~\ref{co11.1}.
Therefore, we can use constructions of the previous section. In
particular, (\ref{13.23}) is fulfilled (the more general property
(\ref{13.22}) is fulfilled too). In connection with (\ref{13.23}),
the obtaining of more simple representations of AS
\bfn\label{14.1}(\mathrm{\mathbf{as}})\bigl[E;\mathbb{F}_o^*(\ca);\mathbf{T}_\ca^*[E];
(\ca-\mathrm{ult})[E];\mathcal{E}\bigl],\ \ \mathcal{E}\in
\cp^\prime(\ca),\efn is important. For this goal, we use the natural
construction of Theorem 8.1 in \cite{40}. Namely, we have the
following

\begin{propos}\label{p14.1}{\TL} If $\mathcal{E}\in \cp^\prime(\ca),$
then
$(\mathrm{\mathbf{as}})\bigl[E;\mathbb{F}_o^*(\ca);\mathbf{T}_\ca^*[E];
(\ca-\mathrm{ult})[E];\mathcal{E}\bigl] = \mathbb{F}_o^*(\ca
|\,\mathcal{E}).$
\end{propos}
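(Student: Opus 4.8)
The plan is to show the two set inclusions separately, using the description of the attraction set $(\mathrm{\mathbf{as}})[\cdot]$ via images of ultrafilters (Proposition~\ref{p4.2}) together with the concrete preimage formula (\ref{10.22}) for the immersion $(\ca-\mathrm{ult})[E]$. Throughout, fix $\mathcal{E}\in \cp^\prime(\ca)$, abbreviate $g\,\df\,(\ca-\mathrm{ult})[E]\in \mathbb{F}_o^*(\ca)^E$, and write $\theta\,\df\,\mathbf{T}_\ca^*[E]$. By Proposition~\ref{p4.2} applied with $X = E$, $Y = \mathbb{F}_o^*(\ca)$, $f = g$, $\tau = \theta$, and $\mathcal{X} = \mathcal{E}$, an ultrafilter $\mathcal{U}^*\in \mathbb{F}_o^*(\ca)$ lies in the attraction set (\ref{14.1}) if and only if there exists $\mathfrak{U}\in \mathfrak{F}_\mathbf{u}^o[E\,|\,\mathcal{E}]$ with $g^1[\mathfrak{U}]\,{\stackrel{\theta}{\Longrightarrow}}\,\mathcal{U}^*$. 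Here I would use the key fact from (\ref{4.2}) that $(\mathbb{F}_o^*(\ca)-\mathrm{\mathbf{fi}})\bigl[g^1[\mathfrak{U}]\bigr]$ is again an ultrafilter on $\mathbb{F}_o^*(\ca)$; since $\theta$ is Hausdorff (see (\ref{7.4}), (\ref{7.5}), and Proposition~\ref{p10.2}), this ultrafilter converges to at most one point, so $\mathcal{U}^*$ is uniquely determined once $\mathfrak{U}$ is chosen. The heart of the argument is therefore to identify this limit point explicitly as $\mathfrak{U}\cap \ca$, i.e.\ to establish the convergence $g^1[\mathfrak{U}]\,{\stackrel{\theta}{\Longrightarrow}}\,(\mathfrak{U}\cap\ca)$ for every $\mathfrak{U}\in \mathfrak{F}_\mathbf{u}[E]$.

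For that convergence, recall from (\ref{11.14}) that $\mathfrak{U}\cap\ca\in \mathbb{F}_o^*(\ca)$, and use Proposition~\ref{p8.4}: a local base of $\theta$ at $\mathfrak{U}\cap\ca$ is given by the sets $\mathbb{F}_o^*(\ca)\setminus \Phi_\ca(L)$ with $L\in \ca\setminus(\mathfrak{U}\cap\ca)$. Fix such an $L$; then $L\in\ca$ but $L\notin\mathfrak{U}$, hence by (\ref{10.1}) $E\setminus L\in\mathfrak{U}$, so $g^1(E\setminus L)\in g^1[\mathfrak{U}]$. For any $x\in E\setminus L$ we have $L\notin (E-\mathrm{ult})[x]$ by (\ref{3.8}), so $L\notin (E-\mathrm{ult})[x]\cap\ca = g(x)$, and hence $g(x)\notin\Phi_\ca(L)$ by (\ref{5.2}); this shows $g^1(E\setminus L)\subset \mathbb{F}_o^*(\ca)\setminus\Phi_\ca(L)$. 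Therefore every basic neighborhood of $\mathfrak{U}\cap\ca$ contains a set of $g^1[\mathfrak{U}]$, and by (\ref{2.18}), (\ref{3.3}), (\ref{3.4}), (\ref{11.24}) we conclude $g^1[\mathfrak{U}]\,{\stackrel{\theta}{\Longrightarrow}}\,(\mathfrak{U}\cap\ca)$. This will be the main step and also the main obstacle, since it requires carefully matching the local-base description of the Wallman-type topology with the image-filter of the immersion.

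With this in hand, the inclusion ``$\subseteq$'' runs as follows: if $\mathcal{U}^*\in (\mathrm{\mathbf{as}})[\cdots]$, choose $\mathfrak{U}\in \mathfrak{F}_\mathbf{u}^o[E\,|\,\mathcal{E}]$ with $g^1[\mathfrak{U}]\,{\stackrel{\theta}{\Longrightarrow}}\,\mathcal{U}^*$; by the step above $g^1[\mathfrak{U}]$ also converges to $\mathfrak{U}\cap\ca$, and Hausdorffness forces $\mathcal{U}^* = \mathfrak{U}\cap\ca$. Since $\mathcal{E}\subset\mathfrak{U}$ and $\mathcal{E}\subset\ca$ we get $\mathcal{E}\subset\mathfrak{U}\cap\ca = \mathcal{U}^*$, so $\mathcal{U}^*\in\mathbb{F}_o^*(\ca\,|\,\mathcal{E})$ by (\ref{10.21`}). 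Conversely, for ``$\supseteq$'', take any $\mathcal{U}^*\in\mathbb{F}_o^*(\ca\,|\,\mathcal{E})$, so $\mathcal{U}^*\in\mathbb{F}_o^*(\ca)$ and $\mathcal{E}\subset\mathcal{U}^*$. By (\ref{7.7}) (the variant of Proposition~2.4.1 of \cite{39}) there is $\widetilde{\mathcal{U}}\in\mathfrak{F}_\mathbf{u}[E]$ with $\mathcal{U}^* = \widetilde{\mathcal{U}}\cap\ca$; then $\mathcal{E}\subset\mathcal{U}^*\subset\widetilde{\mathcal{U}}$, so $\widetilde{\mathcal{U}}\in\mathfrak{F}_\mathbf{u}^o[E\,|\,\mathcal{E}]$. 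Applying the key step to $\widetilde{\mathcal{U}}$ gives $g^1[\widetilde{\mathcal{U}}]\,{\stackrel{\theta}{\Longrightarrow}}\,(\widetilde{\mathcal{U}}\cap\ca) = \mathcal{U}^*$, so by Proposition~\ref{p4.2} $\mathcal{U}^*\in (\mathrm{\mathbf{as}})[\cdots]$. Combining the two inclusions yields the asserted equality; one should also note in passing that this is consistent with (\ref{10.22}), where $(\ca-\mathrm{ult})[E]^{-1}(\mathbb{F}_o^*(\ca\,|\,\mathcal{E})) = \bigcap_{A\in\mathcal{E}}A$ plays the role of the underlying (possibly empty) constraint set.
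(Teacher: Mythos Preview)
Your proof is correct, but it proceeds along a genuinely different route than the paper's. The paper works directly with the net definition (\ref{4.3}): for the inclusion ``$\subseteq$'' it takes an $\mathcal{E}$-admissible net whose image converges to $\cf$, fixes $A\in\mathcal{E}$, and derives a contradiction from $A\notin\cf$ by using the open neighborhood $\Phi_\ca(E\setminus A)$ of $\cf$; for ``$\supseteq$'' it invokes the density statement Proposition~\ref{p10.4} to produce a net in $E$ whose image converges to $\mathcal{V}$, and then checks $\mathcal{E}$-admissibility of that net. Your argument instead passes through Proposition~\ref{p4.2} and isolates a single reusable lemma---that $g^1[\mathfrak{U}]\,{\stackrel{\theta}{\Longrightarrow}}\,(\mathfrak{U}\cap\ca)$ for every $\mathfrak{U}\in\mathfrak{F}_\mathbf{u}[E]$---together with Hausdorffness of $\theta$ to pin down the limit, and the lifting (\ref{7.7}) for the reverse inclusion. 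This is cleaner: it avoids the contradiction argument, does not need Proposition~\ref{p10.4}, and makes the identification $\mathcal{U}^*=\mathfrak{U}\cap\ca$ explicit from the outset. Two minor citation points: your use of Proposition~\ref{p8.4} for the local base of $\theta=\mathbf{T}_\ca^*[E]$ is justified only via Proposition~\ref{p10.2} (since Proposition~\ref{p8.4} is stated for $\mathbf{T}_\cl^o$); and the step ``$L\notin\mathfrak{U}$ hence $E\setminus L\in\mathfrak{U}$'' is the standard ultrafilter property on $\cp(E)$ (i.e.\ (\ref{10.1}) combined with (\ref{7.5})), not (\ref{10.1}) for $\ca$ directly.
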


\begin{proof} Let $\cf \in
(\mathrm{\mathbf{as}})\bigl[E;\mathbb{F}_o^*(\ca);\mathbf{T}_\ca^*[E];
(\ca-\mathrm{ult})[E];\mathcal{E}\bigl].$ Then, by the corresponding
definition of Section 4 (see (\ref{4.3})) $\cf\in
\mathbb{F}_o^*(\ca)$ and, for some net $(D,\preceq,f) $ in the set
$E,$ \bfn\label{14.2}\bigl(\mathcal{E}\subset
(E-\mathrm{ass})[D;\preceq;f]\bigl)\,\&\,\bigl((D,\preceq,(\ca-\mathrm{ult})[E]\circ
f\bigl)\,{\stackrel{\mathbf{T}_\ca^*[E]}{\longrightarrow}}\,\cf\bigl).\efn
Fix $A\in \mathcal{E}.$ Then, by (\ref{14.2}) $A\in
(E-\mathrm{ass})[D;\preceq;f].$ Using (\ref{3.6}), we choose $d_1\in
D$ such that $\fo \delta\in D$ \bfn\label{14.3}(d_1 \preceq \delta)
\Longrightarrow \bigl(f(\delta)\in A\bigl).\efn Of course, $A\in
\cp(E).$ And what is more, $A\in \cf.$ Indeed, let us assume the
contrary: \bfn\label{14.4}A\in \mathcal{E}\setminus \cf.\efn Recall
that $\mathcal{E}\subset \ca.$ Therefore, $A\in \ca.$ By
(\ref{10.1}) and (\ref{14.4}) we have the inclusion $E\setminus A
\in \cf.$ Then, by (\ref{5.3}) $\Phi_\ca(E\setminus A) \in
(\mathbb{UF})[E;\ca].$ In particular (see (\ref{7.3})),
\bfn\label{14.5}\Phi_\ca(E\setminus A)\in \mathbf{T}_\ca^*[E].\efn
Moreover, by (\ref{5.2}) $\cf\in \Phi_\ca(E\setminus A).$ Using
(\ref{14.5}), we obtain that \bfn\label{14.6}\Phi_\ca(E\setminus
A)\in N_{\mathbf{T}_\ca^*[E]}^o(\cf),\efn where
$N_{\mathbf{T}_\ca^*[E]}^o(\cf)\subset
N_{\mathbf{T}_\ca^*[E]}(\cf).$ From (\ref{14.6}) and the second
statement of (\ref{14.2}) we have the following property: there
exists $d_2\in D$ such that $\fo \delta \in D$ \bfn\label{14.7}(d_2
\preceq \delta) \Longrightarrow
\Bigl(\bigl((\ca-\mathrm{ult})[E]\circ f\bigl)(\delta)\in
\Phi_\ca(E\setminus A)\Bigl).\efn By axioms of DS there exists
$d_3\in D$ for which $d_1\preceq d_3$ and $d_2 \preceq d_3.$ By
(\ref{14.3}) $f(d_3) \in A.$ Moreover, by (\ref{14.7})
$$\bigl((\ca-\mathrm{ult})[E]\circ f\bigl)(d_3)\in
\Phi_\ca(E\setminus A).$$ By (\ref{10.20})
$\bigl((\ca-\mathrm{ult})[E]\circ f\bigl)(d_3) =
(E-\mathrm{ult})[f(d_3)] \cap \ca.$ Therefore,
$$(E-\mathrm{ult})[f(d_3)] \cap \ca \in \Phi_\ca(E\setminus A).$$
From (\ref{5.2}), the inclusion $E\setminus A\in
(E-\mathrm{ult})[f(d_3)] \cap \ca$ follows; in particular,
$E\setminus A \in (E-\mathrm{ult})[f(d_3)].$ By (\ref{3.8})
$f(d_3)\in E\setminus A.$ So, $$\bigl(f(d_3)\in
A\bigl)\,\&\,\bigl(f(d_3)\in E\setminus A\bigl).$$ We have the
obvious contradiction. This contradiction means that (\ref{14.4}) is
impossible. So, $A\in \cf.$ Since the choice of $A$ was arbitrary,
the inclusion $\mathcal{E}\subset \cf$ is established. Then (see
(\ref{10.21`})), $\cf \in \mathbb{F}_o^*(\ca |\,\mathcal{E}).$ So,
we obtain the inclusion
\bfn\label{14.8}(\mathrm{\mathbf{as}})[E;\mathbb{F}_o^*(\ca);\mathbf{T}_\ca^*[E];
(\ca-\mathrm{ult})[E];\mathcal{E}]\subset \mathbb{F}_o^*(\ca
|\,\mathcal{E}).\efn Choose arbitrary $\mathcal{V}\in
\mathbb{F}_o^*(\ca |\,\mathcal{E}).$ Then, by (\ref{10.21`})
$\mathcal{V}\in \mathbb{F}_o^*(\ca)$ and $\mathcal{E}\subset
\mathcal{V}.$ By Proposition~\ref{p10.4} and \cite[(3.3.7)]{32}, for
some net $(\mathbb{D},\sqsubseteq, g)$ in the set $E,$ the
convergence
\bfn\label{14.9}\bigl(\mathbb{D},\sqsubseteq,(\ca-\mathrm{ult})[E]
\circ
g\bigl)\,{\stackrel{\mathbf{T}_\ca^*[E]}{\longrightarrow}}\,\mathcal{V}\efn
is fulfilled. Now, we use axiom of choice. Fix $\Om\in \mathcal{E}.$
Then, by the choice of $\mathcal{V}$ the inclusion $\Om\in
\mathcal{V}$ is fulfilled. Of course, by (\ref{5.3})
\bfn\label{14.10}\Phi_\ca(\Om)\in (\mathbb{UF})[E;\ca];\efn in
addition, by (\ref{5.2}) $\mathcal{V}\in \Phi_\ca(\Om).$ Since by
(\ref{7.3}) and (\ref{14.10}) $\Phi_\ca(\Om)\in
\mathbf{T}_\ca^*[E],$ we have the inclusion
\bfn\label{14.11}\Phi_\ca(\Om)\in
N_{\mathbf{T}_\ca^*[E]}^o(\mathcal{V}).\efn From (\ref{14.9}) and
(\ref{14.11}), we have the property: for some $\mathbf{d}\in
\mathbb{D},$ we obtain that $\fo \delta\in \mathbb{D}$
\bfn\label{14.12}(\mathbf{d}\sqsubseteq \delta)
\Longrightarrow\Bigl(\bigl((\ca-\mathrm{ult})[E] \circ
g\bigl)(\delta) \in \Phi_\ca(\Om)\Bigl).\efn  From (\ref{10.20}) and
(\ref{14.12}), we have the following property: $\fo \delta\in
\mathbb{D}$ \bfn\label{14.13}(\mathbf{d}\sqsubseteq \delta)
\Longrightarrow\bigl((E-\mathrm{ult})[g(\delta)] \cap \ca \in
\Phi_\ca(\Om)\bigl).\efn By (\ref{5.2}) and (\ref{14.13}) we obtain
that, for $\delta\in \mathbb{D}$ with the property
$\mathbf{d}\sqsubseteq \delta,$ the inclusion $\Om\in
(E-\mathrm{ult})[g(\delta)] \cap \ca$ is valid and, as a corollary,
by (\ref{3.8}) $g(\delta)\in \Om.$ So, $\Om\in \cp(E)$ and $$\exists
d_1\in \mathbb{D}\ \ \fo d_2\in \mathbb{D}\ \ (d_1\sqsubseteq  d_2)
\Longrightarrow \bigl(g(d_2)\in \Om\bigl).$$ Then, by (\ref{3.6})
$\Om\in (E-\mathrm{ass})[\mathbb{D};\sqsubseteq ;g].$ Since the
choice of $\Om$ was arbitrary, the inclusion
\bfn\label{14.14}\mathcal{E}\subset
(E-\mathrm{ass})[\mathbb{D};\sqsubseteq ;g]\efn is established. So,
by (\ref{14.9}) and (\ref{14.14}) we obtain that the net
$(\mathbb{D},\sqsubseteq,g)$ in the set $E$ has the following
properties: $$\bigl(\mathcal{E}\subset
(E-\mathrm{ass})[\mathbb{D};\sqsubseteq
;g]\bigl)\,\&\,\Bigl(\bigl(\mathbb{D},\sqsubseteq,(\ca-\mathrm{ult})[E]
\circ
g\bigl)\,{\stackrel{\mathbf{T}_\ca^*[E]}{\longrightarrow}}\,\mathcal{V}\Bigl).$$
By definition of Section 4 (see (\ref{4.3})) $\ \ \mathcal{V}\in
(\mathrm{\mathbf{as}})\bigl[E;\mathbb{F}_o^*(\ca);\mathbf{T}_\ca^*[E];
(\ca-\mathrm{ult})[E];\mathcal{E}\bigl].$ So, the inclusion
$$\mathbb{F}_o^*(\ca |\,\mathcal{E})\subset
(\mathrm{\mathbf{as}})\bigl[E;\mathbb{F}_o^*(\ca);\mathbf{T}_\ca^*[E];
(\ca-\mathrm{ult})[E];\mathcal{E}\bigl]$$ is established. Using
(\ref{14.8}), we have the required equality
$$\hspace{2.3cm}(\mathrm{\mathbf{as}})\bigl[E;\mathbb{F}_o^*(\ca);\mathbf{T}_\ca^*[E];
(\ca-\mathrm{ult})[E];\mathcal{E}\bigl] = \mathbb{F}_o^*(\ca
|\,\mathcal{E}).\hspace{2.4cm}$$ \end{proof}

From (\ref{13.23}) and Proposition~\ref{p14.1}, we have the following

\begin{theor}\label{t14.1}{\TL} If $\mathcal{E}\in \cp^\prime(\ca),$ then, AS in
$(\mathbf{H},\tau)$ with constraints of the asymptotic character
defined by $\mathcal{E}$ is realized by the rule
$$(\mathrm{\mathbf{as}})[E;\mathbf{H};\tau;\mathbf{h};\mathcal{E}] =
\mathfrak{H}_\ca[\tau]^1\bigl(\mathbb{F}_o^*(\ca\,
|\,\mathcal{E})\bigl).$$
\end{theor}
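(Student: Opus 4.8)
The plan is to derive the statement by composing two facts already in hand: the transfer identity (\ref{13.23}), which expresses the ``target'' attraction set in $(\mathbf{H},\tau)$ as the $\mathfrak{H}_\ca[\tau]$-image of the ``auxiliary'' attraction set computed inside the compactum $\bigl(\mathbb{F}_o^*(\ca),\mathbf{T}_\ca^*[E]\bigr)$, and Proposition~\ref{p14.1}, which evaluates that auxiliary attraction set explicitly as $\mathbb{F}_o^*(\ca\,|\,\mathcal{E})$. Thus the argument is a one-line substitution, and essentially no new computation is needed at this point.

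Concretely, I would first fix an arbitrary $\mathcal{E}\in \cp^\prime(\ca)$; since $\cp^\prime(\ca)\subset \cp^\prime\bigl(\cp(E)\bigr)$, the relation (\ref{13.23}) applies and gives
\[(\mathrm{\mathbf{as}})[E;\mathbf{H};\tau;\mathbf{h};\mathcal{E}] = \mathfrak{H}_\ca[\tau]^1\Bigl((\mathrm{\mathbf{as}})\bigl[E;\mathbb{F}_o^*(\ca);\mathbf{T}_\ca^*[E];(\ca-\mathrm{ult})[E];\mathcal{E}\bigr]\Bigr).\]
Next I would invoke Proposition~\ref{p14.1}, which yields
\[(\mathrm{\mathbf{as}})\bigl[E;\mathbb{F}_o^*(\ca);\mathbf{T}_\ca^*[E];(\ca-\mathrm{ult})[E];\mathcal{E}\bigr] = \mathbb{F}_o^*(\ca\,|\,\mathcal{E}).\]
Substituting the second display into the first gives $(\mathrm{\mathbf{as}})[E;\mathbf{H};\tau;\mathbf{h};\mathcal{E}] = \mathfrak{H}_\ca[\tau]^1\bigl(\mathbb{F}_o^*(\ca\,|\,\mathcal{E})\bigr)$, which is the assertion; since $\mathcal{E}\in \cp^\prime(\ca)$ was arbitrary, this finishes the proof.

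Within the scope of the statement there is no genuine obstacle: all the analytic content has been absorbed into the two cited facts. If one had to build everything from scratch, the hard part would be (\ref{13.23}) itself — establishing that the mapping $\mathfrak{H}_\ca[\tau]$ obtained by the limit passage (Proposition~\ref{p13.5}) is continuous on $\bigl(\mathbb{F}_o^*(\ca),\mathbf{T}_\ca^*[E]\bigr)$ and satisfies $\mathbf{h} = \mathfrak{H}_\ca[\tau]\circ(\ca-\mathrm{ult})[E]$ (Proposition~\ref{p13.6}), so that $\bigl(\mathbb{F}_o^*(\ca),\mathbf{T}_\ca^*[E],(\ca-\mathrm{ult})[E],\mathfrak{H}_\ca[\tau]\bigr)$ forms a compactificator through which the net-defined attraction set transports faithfully (this relies on the compactness (\ref{10.7}), Condition~\ref{co11.1}, Proposition~5.2.1 of \cite{32}, and the transfer results Proposition~3.1 and Corollary~3.1 of \cite{41}). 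Here, however, all of that is available by reference, so the proof reduces to the two-step composition described above.
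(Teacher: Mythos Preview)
Your proposal is correct and matches the paper's own argument exactly: the theorem is stated as an immediate consequence of (\ref{13.23}) together with Proposition~\ref{p14.1}, and your write-up makes precisely this two-step substitution.
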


We note that, in Theorem~\ref{t14.1}, the set $\mathbb{F}_o^*(\ca  |\,\mathcal{E})$ plays
the role of the set of admissible generalized solutions.

\section{Some remarks}
\setcounter{equation}{0}

In our investigation, one approach to the representation of AS and
approximate solutions is considered. This very general approach
requires the employment of constructions of nonsequential asymptotic
analysis. This is connected both with the necessity of validity of
``asymptotic constraints'' and with the general type of the
convergence in TS.  We fix a nonempty set of usual solutions (the
solution space), the estimate space, and an operator from the
solution space into the estimate space. In the estimate space, a
topology is given. Then, under very different constraints, we can
realize in this space both usual attainable elements and AE. But, if
usual attainable elements are defined comparatively simply (in the
logical relation), then AE are constructed very difficult. For last
goal, extensions of the initial space are used. In addition, the
corresponding spaces of GE are constructed. Ultrafilters of the
initial space can be used as GE. But, the realizability problem
arise: free ultrafilters are ``invisible''. In addition, free
ultrafilters realize limit attainable elements which nonrealizable
in the usual sense. In this connection, we propose to use
ultrafilters of (nonstandard) measurable space; we keep in mind
spaces with an algebra of sets. But, it is possible to consider the
more general constructions with the employment of ultrafilters. In
our investigation, ultrafilters of lattices of sets are used. On
this basis, the interesting connection with the Wallman extension in
general topology arises.

It is possible that the proposed approach motivated by problems of
asymptotic analysis can be useful in other constructions of
contemporary mathematics.

{
\renewcommand{\refname}{{\rm\centerline{The bibliography}}}

\end{document}